\newtheorem{thm}{Theorem}[section]
\newtheorem{lem}[thm]{Lemma}
\newtheorem{prop}[thm]{Proposition}
\theoremstyle{definition}
\newtheorem{defn}[thm]{Definition}
\newtheorem{rem}[thm]{Remark}
\newcommand{\bC}{{\mathbb{C}}}
\newcommand{\bZ}{{\mathbb{Z}}}
\newcommand{\A}{{\mathcal{A}}}
\newcommand{\I}{{\mathcal{I}}}
\newcommand{\M}{{\mathcal{M}}}
\newcommand{\X}{{\mathcal{X}}}
\newcommand{\qqand}{\qquad\text{and}\qquad}
\newcommand{\inv}{\langle -1 \rangle}
\tikzset{Box/.style={very thick, rounded corners}}
\tikzset{marked/.style={star, star point height = .75mm, star points =5, fill=black,minimum size=2mm, inner sep=0mm} }
\tikzset{verythickline/.style = {line width=7pt}}
\tikzset{thickline/.style = {line width=5pt}}
\tikzset{medthick/.style = {line width=3pt}}
\tikzset{med/.style = {line width=2pt}}
\tikzset{count/.style = {fill=white,circle,draw,thin, inner sep=2pt}}
\tikzset{rcount/.style = {fill=white,rectangle,draw,thin,inner sep=2pt, rounded corners}}
\tikzset{cpr/.style = {draw,fill=white,rectangle,thin, rounded corners}}
\definecolor{ggreen}{HTML}{00BB33}
\begin{document}

\nocite{*}

\title[A Combinatorial Approach to Voiculescu's Bi-Free Partial Transforms]{A Combinatorial Approach to Voiculescu's \\ Bi-Free Partial Transforms}

\author{Paul Skoufranis}
\address{Department of Mathematics, Texas A\&M University, College Station, Texas, USA, 77843}
\email{pskoufra@math.tamu.edu}

\subjclass[2010]{46L54, 46L53}
\date{\today}
\keywords{Bi-Free Probability, Partial Bi-Free $S$-Transform, Partial Bi-Free $T$-Transform, Bi-Free Convolutions}

\begin{abstract}
In this paper, we present a combinatorial approach to the 2-variable bi-free partial $S$- and $T$-transforms recently discovered by Voiculescu.  This approach produces an alternate definition of said transforms using $(\ell, r)$-cumulants.
\end{abstract}

\maketitle

\section{Introduction}

Voiculescu introduced the notion of bi-free pairs of faces in \cite{V2014} as a means to simultaneously study left and right actions of algebras on reduced free product spaces.  Substantial recent work has been performed to better understand bi-freeness and its applications (see \cites{CNS2014-1, CNS2014-2, S2014, V2013-2, MN2013, FW2015, GHM2015}). Specifically, Voiculescu developed a 2-variable bi-free partial $R$-transform in \cite{V2013-2} using analytic techniques thereby generalizing his work from \cite{V1986} to the bi-free setting.  A combinatorial proof of the bi-free partial $R$-transform was given in \cite{S2014} using results from \cite{CNS2014-1}.

Recently, for a pair $(a,b)$ of operators in a non-commutative probability space, Voiculescu in \cite{V2015} constructed a 2-variable bi-free partial $S$-transform, denoted $S_{a,b}(z,w)$, to modify his $S$-transform from \cite{V1987} to the bi-free setting.  Using ideas of Haagerup from \cite{H1997}, \cite{V2015} demonstrates that if $(a_1, b_1)$ and $(a_2, b_2)$ are bi-free, then 
\begin{align}
S_{a_1a_2, b_1b_2}(z,w) = S_{a_1, b_1}(z,w) S_{a_2, b_2}(z,w).  \label{eq:S-property}
\end{align}
Furthermore, Voiculescu constructed a 2-variable bi-free partial $T$-transform $T_{a,b}(z,w)$ in order to study the convolution product where additive convolution is used for the left variables and multiplicative convolution is used for the right variables.  In particular, the defining characteristic of $T_{a,b}(z,w)$ is that if $(a_1, b_1)$ and $(a_2, b_2)$ are bi-free, then 
\begin{align}
T_{a_1+a_2, b_1b_2}(z,w) = T_{a_1, b_1}(z,w) T_{a_2, b_2}(z,w).  \label{eq:T-property}
\end{align}

The goal of this paper is to provide a combinatorial proof of Voiculescu's results from \cite{V2015} and is structured as follows.  Section \ref{sec:preliminaries} will establish all preliminary results, background, and notation necessary for the remainder of the paper.  A reader would benefit greatly from knowledge of the combinatorial approach to the free $S$-transform from \cite{NS1997} and knowledge of the combinatorial approach to bi-freeness from \cite{CNS2014-1} (or the summary in \cite{CNS2014-2}).  Section \ref{sec:T} will provide an equivalent description of $T_{a,b}(z,w)$ using $(\ell, r)$-cumulants and will provide a combinatorial proof of equation (\ref{eq:T-property}).  Section \ref{sec:S} will provide an equivalent description of $S_{a,b}(z,w)$ using $(\ell, r)$-cumulants and will provide a combinatorial proof of equation (\ref{eq:S-property}).

It is worth pointing out in this introduction one slight curiosity that has arisen in the study of bi-free pairs of operators.  If $(a_1, b_1)$ and $(a_2, b_2)$ are bi-free pairs of operators, one may ask, ``Which product do we want to consider: $(a_1a_2, b_1b_2)$ or $(a_1a_2, b_2b_1)$?"  This question arises as it is not clear whether to use the usual multiplication or opposite multiplication on the right pair of algebras.  It is not difficult to see that these two pairs have different distributions by results in \cite{CNS2014-1}.  Note \cite{CNS2014-1}*{Theorem 5.2.1} demonstrates that the $(\ell, r)$-cumualnts of $(a_1a_2, b_2b_1)$ can be computed via a convolution product of the $(\ell, r)$-cumulants of $(a_1, b_1)$ and $(a_2, b_2)$ involving a bi-non-crossing Kreweras complement, just as in the free case.  However, the product of Voiculescu's bi-free partial $S$-transforms of $(a_1, b_1)$ and $(a_2, b_2)$ is the bi-free partial $S$-transform of $(a_1a_2, b_1b_2)$.  As we will see in Section \ref{sec:S}, this is not a matter of difference in notation and, as such, one needs to carefully consider which product to use.

\section{Background and Preliminaries}
\label{sec:preliminaries}

In this section, we recall the necessary background required for this paper.  For more background on scalar-valued bi-free probability, we refer the reader to the summary in \cite{CNS2014-2}*{Section 2}.  This section will also serve the purpose of setting notation for the remainder of the paper, which we endeavour to make consistent with \cite{V2015}.  We will treat all series as formal power series, with commuting variables in the multi-variate cases.

\subsection{Free Transforms}

Let $(\A, \varphi)$ be a non-commutative probability space (that is, a unital algebra $\A$ with a linear functional $\varphi : \A \to \bC$ such that $\varphi(I) = 1$) and let $a \in \A$.  The Cauchy transform of $a$ is
\begin{align*}
G_a(z) = \varphi((zI-a)^{-1}) = \frac{1}{z} \sum_{n\geq 0} \varphi(a^n) z^{-n}
\end{align*}
and the moment series of $a$ is
\begin{align*}
h_a(z) = \varphi((I-az)^{-1}) = \sum_{n\geq 0} \varphi(a^n) z^{n} = \frac{1}{z} G_a\left(\frac{1}{z}\right). 
\end{align*}
Recall one defines $K_a(z)$ to be the inverse of $G_a(z)$ in a neighbourhood of $0$ so that $G_a(K_a(z)) = z$.  Thus $R_a(z) = K_a(z) - \frac{1}{z}$ is the $R$-transform of $a$ and 
\begin{align}
h_a\left( \frac{1}{K_a(z)}\right) = K_a(z) G_a(K_a(z)) = z K_a(z).  \label{eq:K-in-moment}
\end{align}
Furthermore, if $\kappa_n(a)$ denotes the $n^{\mathrm{th}}$ free cumulant of $a$ and the cumulant series of $a$ is
\begin{align*}
c_a(z) = \sum_{n\geq 1} \kappa_n(a) z^n, 
\end{align*}
then one can verify that
\begin{align}
1 + c_a(z) = z K_a(z). \label{eq:K-in-cumulant}
\end{align}

To define the $S$-transform of $a$, we assume $\varphi(a) \neq 0$ and let $\psi_a(z) = h_a(z) - 1$.  Since $\psi_a(0) = 0$ and $\psi'_a(z) = \varphi(a) \neq 0$, $\psi_a(z)$ has a formal power series inverse under composition, denoted $\psi^{\inv}_a(z)$.  We define $\X_a(z) = \psi^{\inv}_a(z)$ so that
\begin{align}
h_a(\X_a(z)) = 1 + \psi_a(\X_a(z)) = 1+z. \label{eq:X-in-moment}
\end{align}
The $S$-transform of $a$ is then defined to be
\begin{align}
S_a(z) = \frac{1+z}{z} \X_a(z). \label{eq:S}
\end{align}

\subsection{Free Multiplicative Functions and Convolution}

Let $NC(n)$ denote the lattice of non-crossing partitions on $\{1,\ldots, n\}$ with its usual reverse refinement order, let $0_n$ to denote the minimal element of $NC(n)$, and let $1_n = \{1, 2, \ldots, n\}$ to denote the maximal element of $NC(n)$.  For $\pi, \sigma \in NC(n)$ with $\pi \leq \sigma$, the interval between $\pi$ and $\sigma$, denoted $[\pi, \sigma]$, is the set
\[
[\pi, \sigma] = \{ \rho \in NC(n) \, \mid \, \pi \leq \rho \leq \sigma\}.
\]
A procedure is described in \cite{S1994} which decomposes each interval of non-crossing partitions into a product of full partitions of the form
\[
[0_1, 1_1]^{k_1} \times [0_2, 1_2]^{k_2} \times [0_3, 1_3]^{k_3} \times \cdots
\]
where $k_j \geq 0$.

The incidence algebra of non-crossing partition, denoted $\I(NC)$, is the algebra of all functions
\[
f : \bigcup_{n\geq 1} NC(n) \times NC(n) \to \bC
\]
such that $f(\pi, \sigma) = 0$ unless $\pi \leq \sigma$, equipped with pointwise addition and a convolution product defined by
\[
(f \ast g)(\pi, \sigma) = \sum_{\rho \in [\pi, \sigma]} f(\pi, \rho) g(\rho, \sigma).
\]

Recall $f \in \I(NC)$ is called multiplicative if whenever $[\pi, \sigma]$ has a canonical decomposition $[0_1, 1_1]^{k_1} \times [0_2, 1_2]^{k_2} \times [0_3, 1_3]^{k_3} \times \cdots$, then
\[
f(\pi, \sigma) = f(0_1, 1_1)^{k_1} f(0_2, 1_2)^{k_2} f(0_3, 1_3)^{k_3} \cdots.
\]
Thus the value of a multiplicative function $f$ on any pair of non-crossing partitions is completely determined by the values of $f$ on full non-crossing partition lattices.  We will denote the set of all multiplicative functions by $\M$ and the set all multiplicative functions $f$ with $f(0_1, 1_1) = 1$ by $\M_1$.

If $f, g \in \M$, one can verify that $f \ast g = g \ast f$.  Furthermore, there is a nicer expression for convolution of multiplicative functions.  Given a non-crossing partition $\pi \in NC(n)$, the Kreweras complement of $\pi$, denoted $K(\pi)$, is the non-crossing partition on $\{1, \ldots, n\}$ with non-crossing diagram obtained by drawing $\pi$ via the standard non-crossing diagram on $\{1,\ldots, n\}$, placing nodes $1', 2', \ldots, n'$ with $k'$ directly to the right of $k$, and drawing the largest non-crossing partition on $1', 2', \ldots, n'$ that does not intersect $\pi$, which is then $K(\pi)$.  The following diagram exhibits that if $\pi = \{\{1,6\}, \{2,3,4\}, \{5\}, \{7\}\}$, then $K(\pi) = \{\{1,4,5\}, \{2\}, \{3\}, \{6,7\}\}$.
\begin{align*}
	\begin{tikzpicture}[baseline]
	\draw[thick] (1,0) -- (1,1) -- (6,1) -- (6,0);
	\draw[thick] (2,0) -- (2,.5) -- (4,.5) -- (4,0);
	\draw[thick] (3,0) -- (3,.5);
	\draw[thick,red] (1.5,0) -- (1.5,0.75) -- (5.5,0.75)--(5.5, 0);
	\draw[thick,red] (4.5,0) -- (4.5,0.75);
	\draw[thick,red] (6.5,0) -- (6.5,0.5) -- (7.5,0.5)--(7.5, 0);
	\draw[thick, dashed] (0.5,0) -- (8,0);
	\node[below] at (1, 0) {1};
	\draw[fill=black] (1,0) circle (0.05);
	\node[below] at (2, 0) {2};
	\draw[fill=black] (2,0) circle (0.05);
	\node[below] at (3, 0) {3};
	\draw[fill=black] (3,0) circle (0.05);
	\node[below] at (4, 0) {4};
	\draw[fill=black] (4,0) circle (0.05);
	\node[below] at (5, 0) {5};
	\draw[fill=black] (5,0) circle (0.05);
	\node[below] at (6, 0) {6};
	\draw[fill=black] (6,0) circle (0.05);
	\node[below] at (7, 0) {7};
	\draw[fill=black] (7,0) circle (0.05);
	\node[below] at (1.5, 0) {$1'$};
	\draw[red, fill=red] (1.5,0) circle (0.05);
	\node[below] at (2.5, 0) {$2'$};
	\draw[red, fill=red] (2.5,0) circle (0.05);
	\node[below] at (3.5, 0) {$3'$};
	\draw[red, fill=red] (3.5,0) circle (0.05);
	\node[below] at (4.5, 0) {$4'$};
	\draw[red, fill=red] (4.5,0) circle (0.05);
	\node[below] at (5.5, 0) {$5'$};
	\draw[red, fill=red] (5.5,0) circle (0.05);
	\node[below] at (6.5, 0) {$6'$};
	\draw[red, fill=red] (6.5,0) circle (0.05);
	\node[below] at (7.5, 0) {$7'$};
	\draw[red, fill=red] (7.5,0) circle (0.05);
	\end{tikzpicture}
\end{align*}
For $f, g \in \M$, convolution then becomes
\[
(f \ast g)(0_n, 1_n) = \sum_{\pi \in NC(n)} f(0_n, \pi) g(0_n, K(\pi)).
\]
Note \cite{NS1997} demonstrated that if $a, b \in \A$ are free and if $f$ (respectively $g$) is the multiplicative function associated to the cumulants of $a$ (respectively $b$) defined by $f(0_n, 1_n) = \kappa_n(a)$ (respectively $g(0_n, 1_n) = \kappa_n(b)$), then $\kappa_n(ab) = \kappa_n(ba) = (f \ast g)(0_n, 1_n)$.  Furthermore, for $\pi \in NC(n)$ with blocks $\{V_k\}^m_{k=1}$, $f(0_n, \pi) = \kappa_\pi(a) = \prod^m_{k=1} \kappa_{|V_k|}(a)$.

We will need another convolution product on $\M_1$ from \cite{NS1997}.  Let $NC'(n)$ denote all non-crossing partitions $\pi$ on $\{1,\ldots, n\}$ such that $\{1\}$ is a block in $\pi$. It is not difficult to construct an natural isomorphism between $NC'(n)$ and $NC(n-1)$.  The following diagrams illustrate all elements $NC'(4)$, together with their Kreweras complements.
\begin{align*}
	\begin{tikzpicture}[baseline]
	\draw[thick,red] (1.5,0) -- (1.5,1) -- (4.5,1)--(4.5, 0);
	\draw[thick,red] (2.5,0) -- (2.5,1);
	\draw[thick,red] (3.5,0) -- (3.5,1);
	\draw[thick, dashed] (0.5,0) -- (5,0);
	\node[below] at (1, 0) {1};
	\draw[fill=black] (1,0) circle (0.05);
	\node[below] at (2, 0) {2};
	\draw[fill=black] (2,0) circle (0.05);
	\node[below] at (3, 0) {3};
	\draw[fill=black] (3,0) circle (0.05);
	\node[below] at (4, 0) {4};
	\draw[fill=black] (4,0) circle (0.05);
	\node[below] at (1.5, 0) {$1'$};
	\draw[red, fill=red] (1.5,0) circle (0.05);
	\node[below] at (2.5, 0) {$2'$};
	\draw[red, fill=red] (2.5,0) circle (0.05);
	\node[below] at (3.5, 0) {$3'$};
	\draw[red, fill=red] (3.5,0) circle (0.05);
	\node[below] at (4.5, 0) {$4'$};
	\draw[red, fill=red] (4.5,0) circle (0.05);
	\end{tikzpicture}\quad
	\begin{tikzpicture}[baseline]
	\draw[thick,red] (1.5,0) -- (1.5,1) -- (4.5,1)--(4.5, 0);
	\draw[thick,black] (2,0) -- (2,.75) -- (3,.75) -- (3,0);
	\draw[thick,red] (3.5,0) -- (3.5,1);
	\draw[thick, dashed] (0.5,0) -- (5,0);
	\node[below] at (1, 0) {1};
	\draw[fill=black] (1,0) circle (0.05);
	\node[below] at (2, 0) {2};
	\draw[fill=black] (2,0) circle (0.05);
	\node[below] at (3, 0) {3};
	\draw[fill=black] (3,0) circle (0.05);
	\node[below] at (4, 0) {4};
	\draw[fill=black] (4,0) circle (0.05);
	\node[below] at (1.5, 0) {$1'$};
	\draw[red, fill=red] (1.5,0) circle (0.05);
	\node[below] at (2.5, 0) {$2'$};
	\draw[red, fill=red] (2.5,0) circle (0.05);
	\node[below] at (3.5, 0) {$3'$};
	\draw[red, fill=red] (3.5,0) circle (0.05);
	\node[below] at (4.5, 0) {$4'$};
	\draw[red, fill=red] (4.5,0) circle (0.05);
	\end{tikzpicture} \quad
	\begin{tikzpicture}[baseline]
	\draw[thick,red] (1.5,0) -- (1.5,1) -- (4.5,1)--(4.5, 0);
	\draw[thick,red] (2.5,0) -- (2.5,1);
	\draw[thick,black] (3,0) -- (3, .75) -- (4, .75) -- (4,0);
	\draw[thick, dashed] (0.5,0) -- (5,0);
	\node[below] at (1, 0) {1};
	\draw[fill=black] (1,0) circle (0.05);
	\node[below] at (2, 0) {2};
	\draw[fill=black] (2,0) circle (0.05);
	\node[below] at (3, 0) {3};
	\draw[fill=black] (3,0) circle (0.05);
	\node[below] at (4, 0) {4};
	\draw[fill=black] (4,0) circle (0.05);
	\node[below] at (1.5, 0) {$1'$};
	\draw[red, fill=red] (1.5,0) circle (0.05);
	\node[below] at (2.5, 0) {$2'$};
	\draw[red, fill=red] (2.5,0) circle (0.05);
	\node[below] at (3.5, 0) {$3'$};
	\draw[red, fill=red] (3.5,0) circle (0.05);
	\node[below] at (4.5, 0) {$4'$};
	\draw[red, fill=red] (4.5,0) circle (0.05);
	\end{tikzpicture}
\end{align*}
\begin{align*}
	\begin{tikzpicture}[baseline]
	\draw[thick,red] (1.5,0) -- (1.5,1) -- (4.5,1)--(4.5, 0);
	\draw[thick,red] (2.5,0) -- (2.5,.5) -- (3.5, .5) -- (3.5,0);
	\draw[thick,black] (2,0) -- (2,.75) -- (4, .75) -- (4,0);
	\draw[thick, dashed] (0.5,0) -- (5,0);
	\node[below] at (1, 0) {1};
	\draw[fill=black] (1,0) circle (0.05);
	\node[below] at (2, 0) {2};
	\draw[fill=black] (2,0) circle (0.05);
	\node[below] at (3, 0) {3};
	\draw[fill=black] (3,0) circle (0.05);
	\node[below] at (4, 0) {4};
	\draw[fill=black] (4,0) circle (0.05);
	\node[below] at (1.5, 0) {$1'$};
	\draw[red, fill=red] (1.5,0) circle (0.05);
	\node[below] at (2.5, 0) {$2'$};
	\draw[red, fill=red] (2.5,0) circle (0.05);
	\node[below] at (3.5, 0) {$3'$};
	\draw[red, fill=red] (3.5,0) circle (0.05);
	\node[below] at (4.5, 0) {$4'$};
	\draw[red, fill=red] (4.5,0) circle (0.05);
	\end{tikzpicture}
	\quad
	\begin{tikzpicture}[baseline]
	\draw[thick,red] (1.5,0) -- (1.5,1) -- (4.5,1)--(4.5, 0);
	\draw[thick,black] (2,0) -- (2,.75) -- (4, .75) -- (4,0);
	\draw[thick,black] (3,0) -- (3,.75);
	\draw[thick, dashed] (0.5,0) -- (5,0);
	\node[below] at (1, 0) {1};
	\draw[fill=black] (1,0) circle (0.05);
	\node[below] at (2, 0) {2};
	\draw[fill=black] (2,0) circle (0.05);
	\node[below] at (3, 0) {3};
	\draw[fill=black] (3,0) circle (0.05);
	\node[below] at (4, 0) {4};
	\draw[fill=black] (4,0) circle (0.05);
	\node[below] at (1.5, 0) {$1'$};
	\draw[red, fill=red] (1.5,0) circle (0.05);
	\node[below] at (2.5, 0) {$2'$};
	\draw[red, fill=red] (2.5,0) circle (0.05);
	\node[below] at (3.5, 0) {$3'$};
	\draw[red, fill=red] (3.5,0) circle (0.05);
	\node[below] at (4.5, 0) {$4'$};
	\draw[red, fill=red] (4.5,0) circle (0.05);
	\end{tikzpicture}
\end{align*}
We desire to make an observation, which may be proved by induction.  Given two non-crossing partitions $\pi$ and $\sigma$, let $\pi \vee \sigma$ denotes the smallest non-crossing partition larger than both $\pi$ and $\sigma$.  Fix $\pi \in NC'(n)$.  If $\sigma$ is the non-crossing partition on $\{1, 1', 2, 2', \ldots, n, n'\}$ (with the ordering being the order of listing) with blocks $\{k, k'\}$ for all $k$, then the only non-crossing partition $\tau$ on $\{1', \ldots, n'\}$ such that $\pi \cup \tau$ is non-crossing (under the ordering $1, 1', 2, 2', \ldots, n, n'$) and $(\pi \cup \tau) \vee \sigma = 1_{2n}$ is $\tau = K(\pi)$.

For $f, g \in \M_1$, the ``pinched-convolution" of $f$ and $g$, denoted $f \check{\ast} g$, is the unique element of $\M_1$ such that
\[
(f \check{\ast} g)[0_n, 1_n] = \sum_{\pi \in NC'(n)} f(0_n, \pi) g(0_n, K(\pi)).
\]
The pinched-convolution product is not commutative on $\M_1$.

Given an element $f \in \M$, we define the formal power series
\[
\phi_f(z) = \sum_{n\geq 1} f(0_n, 1_n) z^n.
\]
In particular, if $f$ is the multiplicative function associated to the cumulants of $a$ defined by $f(0_n, 1_n) = \kappa_n(a)$, then $\phi_f(z) = c_a(z)$.  Several formulae involving $\phi_f(z)$ are developed in \cite{NS1997}.  In particular, \cite{NS1997}*{Proposition 2.3} demonstrates that if $f,g \in \M_1$ then $\phi_f(\phi_{f \check{\ast} g}(z)) = \phi_{f \ast g}(z)$ and thus
\begin{align}
\phi_{f \check{\ast} g}\left(\phi^{\inv}_{f \ast g}(z)\right) = \phi^{\inv}_f(z).  \label{eq:inversion-with-convolution}
\end{align}
Furthermore, \cite{NS1997}*{Theorem 1.6} demonstrates that
\begin{align}
z \cdot \phi^{\inv}_{f \check{\ast} g}(z) = \phi^{\inv}_{f}(z)\phi^{\inv}_{g}(z).  \label{eq:convolution-with-inverse-series}
\end{align}
A immediate consequence of equation (\ref{eq:convolution-with-inverse-series}) is that if $\varphi(a) = 1$, then
\begin{align}
S_a(z) = \frac{1}{z} c_a^{\inv}(z).  \label{eq:S-to-cumulants}
\end{align}

\subsection{Bi-Freeness}

For a map $\chi : \{1,\ldots, n\} \to \{\ell, r\}$, the set of bi-non-crossing partitions on $\{1,\ldots, n\}$ associated to $\chi$ is denoted by $BNC(\chi)$.  Note $BNC(\chi)$ becomes a lattice where $\pi \leq \sigma$ provided every block of $\pi$ is contained in a single block of $\sigma$. The largest partition in $BNC(\chi)$, which is $\{\{1,\ldots, n\}\}$, will be denoted $1_\chi$.  The work in \cite{CNS2014-1} demonstrates that $BNC(\chi)$ is naturally isomorphic to $NC(n)$ via a permutation of $\{1,\ldots, n\}$ induced by $\chi$.   

Given elements $\{a_n\}^n_{n=1} \subseteq \A$, the $(\ell, r)$-cumulant associated to a map $\chi : \{1,\ldots, n\} \to \{\ell, r\}$ was defined in \cite{MN2013} and will be denoted $\kappa_{\chi}(a_1, \ldots, a_n)$.  Note $\kappa_{\chi}$ is linear in each entry and the main results of \cite{CNS2014-1} is that if $(a_1, b_1)$ and $(a_2, b_2)$ are bi-free two-faced pairs in $(\A, \varphi)$, $\chi : \{1,\ldots, n\} \to \{\ell, r\}$, $\epsilon : \{1, \ldots, n\} \to \{\ell, r\}$, $c_{\ell, k} = a_k$, and $c_{r, k} = b_k$, then 
\[
\kappa_\chi(c_{\chi(1), \epsilon(1)}, \ldots, c_{\chi(n), \epsilon(n)}) = 0
\]
whenever $\epsilon$ is not constant.

Given a $\pi \in BNC(\chi)$, each block $B$ of $\pi$ corresponds to the bi-non-crossing partition $1_{\chi_B}$ for some $\chi_B : B \to \{\ell, r\}$ (where the ordering on $B$ is induced from $\{1,\ldots, n\}$).  We denote
\[
\kappa_\pi(a_1, \ldots, a_n) = \prod_{B \text{ a block of }\pi} \kappa_{1_{\chi_B}}((a_1,\ldots, a_n)|_B)
\]
where $(a_1,\ldots, a_n)|_B$ denotes the $|B|$-tuple where indices not in $B$ are removed.  Similarly, if $V$ is a union of blocks of $\pi$, we denote $\pi|_V$ the bi-non-crossing partition obtained by restricting $\pi$ to $V$.

For $n,m\geq 0$, we will often consider the maps $\chi_{n,m} : \{1,\ldots, n+m\} \to \{\ell, r\}$ such that $\chi(k) = \ell$ if $k \leq n$ and $\chi(k) = r$ if $k > n$.  For notation purposes, it will be useful to think of $\chi_{n,m}$ as a map on $\{1_\ell, 2_\ell, \ldots, n_\ell, 1_r, 2_r, \ldots, m_r\}$ under the identification $k \mapsto k_\ell$ if $k \leq n$ and $k \mapsto (k-n)_r$ if $k > n$. Furthermore, we denote $BNC(n,m)$ for $BNC(\chi_{n,m})$, $1_{n,m}$ for $1_{\chi_{n,m}}$, and, for $n,m\geq 1$, $\kappa_{n,m}(a_1, \ldots, a_n, b_1, \ldots, b_m)$ for $\kappa_{1_{n,m}}(a_1, \ldots, a_n, b_1, \ldots, b_m)$.  Finally, for $n,m \geq 1$, we denote $\kappa_{n,m}(a,b) = \kappa_{1_{n,m}}(a,b)$, $\kappa_{n,0}(a,b) = \kappa_n(a)$, and $\kappa_{0,m}(a,b) = \kappa_{n}(b)$.

\subsection{Bi-Free Transforms}

Given two elements $a,b \in \A$, we define the ordered joint moment and cumulant series of the pair $(a,b)$ to be
\[
H_{a,b}(z,w) = \sum_{n,m\geq 0} \varphi(a^nb^m) z^n w^m \qqand C_{a,b}(z,w) = \sum_{n,m\geq 0} \kappa_{n,m}(a,b) z^n w^m
\]
respectively (where $\kappa_{0,0}(a,b) = 1$).  Note \cite{S2014}*{Theorem 7.2.4} demonstrates that
\begin{align}
h_a(z) + h_b(w) = \frac{h_a(z)h_b(w)}{H_{a,b}(z,w)}  + C_{a,b}(z h_a(z), w h_b(w)) \label{eq:bi-moment}
\end{align}
through combinatorial techniques.  It was also demonstrated that equation (\ref{eq:bi-moment}) was equivalent to Voiculescu's 2-variable bi-free partial $R$-transform from \cite{V2013-2}.

For computational purposes, it will be helpful to consider the series
\begin{align}
K_{a,b}(z,w) = \sum_{n,m\geq 1} \kappa_{n,m}(a,b) z^n w^m = C_{a,b}(z,w) - c_a(z) - c_b(w) - 1. \label{eq:bi-K}
\end{align}
Of use will also be the series
\begin{align}
F_{a,b}(z,w) = \varphi((zI-a)^{-1} (1-wb)^{-1}) = \frac{1}{z} \sum_{n,m \geq 0} \varphi(a^n b^m) z^{-n} w^m = \frac{1}{z} H_{a,b}\left(\frac{1}{z}, w\right).  \label{eq:bi-F}
\end{align}

\subsection{Bi-Free Cumulants of Products}

Of paramount importance to this paper is the ability to write $(\ell, r)$-cumulants of products as sums of $(\ell, r)$-cumulants.  We recall following results from \cite{CNS2014-2}*{Section 9}.  Given two partitions $\pi, \sigma \in BNC(\chi)$, we let $\pi \vee \sigma$ denote the smallest element of $BNC(\chi)$ greater than $\pi$ and $\sigma$.

Let $m,n \geq 1$ with $m < n$ and fix a sequence of integers 
\[
k(0) = 0 < k(1) < \cdots < k(m) = n.
\]
For $\chi : \{1,\ldots, m\} \to \{\ell, r\}$, we define $\widehat{\chi} : \{1,\ldots, n\} \to \{\ell, r\}$ via
\[
\widehat{\chi}(q) = \chi(p_q)
\]
where $p_q$ is the unique element of $\{1,\ldots, m\}$ such that $k(p_q-1) < q \leq k(p_q)$.

There exists an embedding of $BNC(\chi)$ into $BNC(\widehat{\chi})$ via $\pi \mapsto \widehat{\pi}$ where the $p^{\mathrm{th}}$ node of $\pi$ is replaced by the block $\{k(p-1)+1, \ldots, k(p)\}$.
It is easy to see that $\widehat{1_\chi} = 1_{\widehat{\chi}}$ and $\widehat{0_\chi}$ is the partition with blocks $\{\{k(p-1)+1, \ldots, k(p)\} \}_{p=1}^m$.

Using ideas from  \cite{NS2006}*{Theorem 11.12}, \cite{CNS2014-2}*{Theorem 9.1.5} showed that if $\{a_k\}^n_{k=1} \subseteq \A$, then
\begin{align}
\kappa_{1_\chi}\left(a_1 \cdots a_{k(1)}, a_{k(1)+1} \cdots a_{k(2)}, \ldots, a_{k(m-1)+1} \cdots a_{k(m)}\right) = \sum_{\substack{\sigma \in BNC(\widehat{\chi})\\ \sigma \vee \widehat{0_\chi} = 1_{\widehat{\chi}}}} \kappa_\sigma(a_1, \ldots, a_n). \label{eq:product-of-cumulants}
\end{align}

\section{Bi-Free Partial $T$-Transform}
\label{sec:T}

We begin with Voiculescu's bi-free partial $T$-transform as the combinatorics are slightly simpler than the bi-free partial $S$-transform.
\begin{defn}[\cite{V2015}*{Definition 3.1}]
Let $(a, b)$ be a two-faced pair in a non-commutative probability space $(\A, \varphi)$ with $\varphi(b) \neq 0$.  The $2$-variable partial bi-free $T$-transform of $(a, b)$ is the holomorphic function on $(\bC \setminus \{0\})^2$ near $(0,0)$ defined by
\begin{align}
T_{a,b}(z, w) = \frac{w+1}{w} \left( 1-\frac{z}{F_{a,b}(K_a(z), \X_b(w))}\right). \label{eq:T-V}
\end{align}
\end{defn}

It will be useful to note the following equivalent definition of the bi-free partial $T$-transform.  To simplify discussions, we will demonstrate the equality in the case $\varphi(b) = 1$.  This does not hinder the proof of the desired result; that is, Theorem \ref{thm:T-property} (see Remark \ref{rem:T-b=1}).

\begin{prop}
\label{prop:T-new-defn}
If $(a,b)$ is a two-faced pair in a non-commutative probability space $(\A, \varphi)$ with $\varphi(b) = 1$, then, as formal power series,
\begin{align}
T_{a,b}(z,w) = 1 + \frac{1}{w} K_{a,b}\left(z,  c^{\inv}_b(w) \right).  \label{eq:T-my}
\end{align}
\end{prop}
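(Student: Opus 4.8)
The plan is to substitute $(z,w) \mapsto \bigl(1/K_a(z),\, \X_b(w)\bigr)$ into the bi-free moment--cumulant relation \eqref{eq:bi-moment}, since the defining formula \eqref{eq:T-V} evaluates $F_{a,b}$ at exactly $\bigl(K_a(z), \X_b(w)\bigr)$, and \eqref{eq:bi-F} rewrites this as a value of $H_{a,b}$ at $\bigl(1/K_a(z), \X_b(w)\bigr)$. Every series in sight remains a genuine formal power series under this substitution: by \eqref{eq:K-in-cumulant} we have $1/K_a(z) = z/(1+c_a(z))$, which has no constant term, and $\X_b(w) = \psi_b^{\inv}(w)$ has no constant term because $\varphi(b) = 1 \neq 0$; hence every composition below is well defined and $H_{a,b}$ is evaluated at a pair of series with vanishing constant term.

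Writing $\tilde z = 1/K_a(z)$ and $\tilde w = \X_b(w)$, I would evaluate each ingredient of \eqref{eq:bi-moment} separately. For the left face, \eqref{eq:K-in-moment} gives $h_a(\tilde z) = z K_a(z)$, so $\tilde z\, h_a(\tilde z) = z$; for the right face, \eqref{eq:X-in-moment} gives $h_b(\tilde w) = 1 + w$. The crucial identification is that the second cumulant argument equals $c_b^{\inv}(w)$: comparing the definition \eqref{eq:S} of the $S$-transform with the cumulant expression \eqref{eq:S-to-cumulants} (which applies precisely because $\varphi(b) = 1$) yields $(1+w)\X_b(w) = c_b^{\inv}(w)$, so that $\tilde w\, h_b(\tilde w) = c_b^{\inv}(w)$. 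Finally \eqref{eq:bi-F} gives $H_{a,b}(\tilde z, \tilde w) = K_a(z)\, F_{a,b}\bigl(K_a(z), \X_b(w)\bigr)$. Feeding all of this into \eqref{eq:bi-moment} and cancelling the factor $K_a(z)$ in the quotient term produces
\[
z K_a(z) + (1+w) = \frac{z(1+w)}{F_{a,b}(K_a(z), \X_b(w))} + C_{a,b}\bigl(z,\, c_b^{\inv}(w)\bigr).
\]

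It then remains to pass from $C_{a,b}$ to $K_{a,b}$ and simplify. Since $c_b(c_b^{\inv}(w)) = w$, equation \eqref{eq:bi-K} gives $C_{a,b}(z, c_b^{\inv}(w)) = K_{a,b}(z, c_b^{\inv}(w)) + c_a(z) + w + 1$, while \eqref{eq:K-in-cumulant} rewrites $z K_a(z) = 1 + c_a(z)$. After cancelling the common summand $c_a(z) + w$, the previous display collapses to
\[
1 - K_{a,b}\bigl(z,\, c_b^{\inv}(w)\bigr) = \frac{z(w+1)}{F_{a,b}(K_a(z), \X_b(w))},
\]
and multiplying the claimed identity \eqref{eq:T-my} through by $w$ and comparing with \eqref{eq:T-V} shows this is exactly the assertion to be proved. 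I expect the main obstacle to be bookkeeping rather than a single hard computation: one must justify carefully that the substitution into \eqref{eq:bi-moment} is legitimate over formal power series --- in particular that $H_{a,b}(\tilde z, \tilde w)$ has constant term $1$, so the quotient term is invertible --- and one must track the identity $(1+w)\X_b(w) = c_b^{\inv}(w)$ accurately, as this is the single place where the hypothesis $\varphi(b) = 1$ is genuinely used.
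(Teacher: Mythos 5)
Your proof is correct and follows essentially the same route as the paper: substitute $(1/K_a(z),\X_b(w))$ into equation (\ref{eq:bi-moment}), use (\ref{eq:K-in-moment}), (\ref{eq:X-in-moment}), the identity $(1+w)\X_b(w)=c_b^{\inv}(w)$ coming from (\ref{eq:S}) and (\ref{eq:S-to-cumulants}), and then convert $C_{a,b}$ to $K_{a,b}$ via (\ref{eq:bi-K}) and (\ref{eq:K-in-cumulant}). The only cosmetic difference is that you isolate the identity $1-K_{a,b}(z,c_b^{\inv}(w))=z(w+1)/F_{a,b}(K_a(z),\X_b(w))$ and compare it with (\ref{eq:T-V}), whereas the paper solves for $1/H_{a,b}$ and substitutes directly into the definition of $T_{a,b}$; the algebra is identical.
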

\begin{proof}
Using equations (\ref{eq:K-in-moment}, \ref{eq:X-in-moment}, \ref{eq:bi-moment}), we obtain that
\begin{align*}
\frac{1}{H_{a,b}\left(\frac{1}{K_a(z)}, \X_b(w)\right)} = \frac{1}{zK_a(z)} + \frac{1}{1+w} -\frac{1}{zK_a(z)}\frac{1}{1+w}C_{a,b}\left(z, (1+w)\X_b(w)\right)   .
\end{align*}
Therefore, using equations (\ref{eq:S}, \ref{eq:S-to-cumulants},  \ref{eq:bi-K}, \ref{eq:bi-F}, \ref{eq:T-V}), we obtain that
\begin{align*}
T_{a,b}(z,w) &= \frac{w+1}{w} \left( 1-\frac{z}{\frac{1}{K_a(z)}H_{a,b}\left(\frac{1}{K_a(z)}, \X_b(w)\right)}\right) \\
&= \frac{w+1}{w} \left(1 - zK_a(z) \left( \frac{1}{zK_a(z)} + \frac{1}{1+w} -\frac{1}{zK_a(z)}\frac{1}{1+w}C_{a,b}\left(z, c^{\inv}_b(w)\right)    \right)    \right) \\
&= \frac{1}{w}\left(-zK_a(z) + C_{a,b}\left(z, c^{\inv}_b(w)\right)  \right)\\
&= \frac{1}{w}\left(- zK_a(z) + 1 + c_a(z) + c_b\left( c^{\inv}_b(w) \right) +  K_{a,b}\left(z,  c^{\inv}_b(w) \right)\right)\\
&= \frac{1}{w}\left(w +  K_{a,b}\left(z,  c^{\inv}_b(w) \right)\right) = 1 + \frac{1}{w}  K_{a,b}\left(z,  c^{\inv}_b(w) \right). \qedhere
\end{align*}
\end{proof}

\begin{rem}
\label{rem:T-b=1}
One might be concerned that we have restricted to the case $\varphi(b) = 1$.  However, if we use equation (\ref{eq:T-my}) as the definition of the bi-free partial $T$-transform and if $\lambda \in \bC \setminus \{0\}$, then $T_{a,b}(z,w) = T_{a,\lambda b}(z,w)$.  Indeed $c_{\lambda b}(w) = c_b(\lambda w)$ so $c_{\lambda b}^{\inv}(w) = \frac{1}{\lambda} c^{\inv}_b(w)$.  Therefore, since $\kappa_{n,m}(a, \lambda b) = \lambda^m \kappa_{n,m}(a,b)$, we see that 
\[
K_{a,\lambda b}\left(z,  c^{\inv}_{\lambda b}(w) \right) = K_{a,b}\left(z,  c^{\inv}_b(w) \right).
\]
Thus there is no loss in assuming $\varphi(b) = 1$.
\end{rem}
\begin{rem}
Note Proposition \ref{prop:T-new-defn} immediately provides the $T$-transform part of \cite{V2015}*{Proposition 4.2}.  Indeed if $a$ and $b$ are elements of a non-commutative probability space $(\A, \varphi)$ with $\varphi(b) \neq 0$ and $\varphi(a^nb^m) = \varphi(a^n)\varphi(b^m)$ for all $n,m \geq 0$, then $\kappa_{n,m}(a,b) = 0$ for all $n,m\geq 1$ (see \cite{S2014}*{Section 3.2}).  Hence $K_{a,b}(z,w) = 0$ so $T_{a,b}(z,w) = 1$.
\end{rem}

We desire to prove the following, which was one of two main results of \cite{V2015}, using combinatorics via Proposition \ref{prop:T-new-defn}.  
\begin{thm}[\cite{V2015}*{Theorem 3.1}]
\label{thm:T-property}
Let $(a_1, b_1)$ and $(a_2, b_2)$ be bi-free two-faced pairs in a non-commutative probability space $(\A, \varphi)$ with $\varphi(b_1) \neq 0$ and $\varphi(b_2) \neq 0$.  Then
\[
T_{a_1+ a_2, b_1b_2}(z,w) = T_{a_1, b_1}(z,w) T_{a_2, b_2}(z,w)
\]
on $(\bC \setminus \{0\})^2$ near $(0,0)$.
\end{thm}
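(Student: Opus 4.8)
The plan is to discard the analytic definition entirely and work with the cumulant description (\ref{eq:T-my}). By Remark \ref{rem:T-b=1} there is no loss in assuming $\varphi(b_1) = \varphi(b_2) = 1$; since the right faces $b_1, b_2$ of bi-free pairs are free, $\varphi(b_1 b_2) = \varphi(b_1)\varphi(b_2) = 1$, so Proposition \ref{prop:T-new-defn} applies to all three pairs. Writing $P_i(z,w) = \frac{1}{w} K_{a_i, b_i}(z, c^{\inv}_{b_i}(w))$ and $P(z,w) = \frac{1}{w} K_{a_1 + a_2, b_1 b_2}(z, c^{\inv}_{b_1 b_2}(w))$, formula (\ref{eq:T-my}) turns the claimed identity into the purely combinatorial statement $1 + P = (1+P_1)(1+P_2)$, that is, $P = P_1 + P_2 + P_1 P_2$. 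First I would prove this at the level of the cumulant series, and only afterwards install the substitutions $w \mapsto c^{\inv}_b(w)$.

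The engine is the product formula (\ref{eq:product-of-cumulants}). I would compute $\kappa_{n,m}(a_1+a_2, b_1 b_2)$ by expanding each of the $m$ right entries $b_1 b_2$ into two nodes via (\ref{eq:product-of-cumulants}) and each of the $n$ left entries $a_1 + a_2$ by multilinearity. This produces a sum over colourings of the left nodes by $\{1,2\}$ and over $\sigma \in BNC(\widehat{\chi})$ with $\sigma \vee \widehat{0_\chi} = 1_{\widehat{\chi}}$, where the two members of each expanded right pair carry colours $1$ (a $b_1$) and $2$ (a $b_2$). Since $(a_1,b_1)$ and $(a_2,b_2)$ are bi-free, the vanishing of mixed $(\ell,r)$-cumulants forces every block of $\sigma$ to be monochromatic; moreover $\sigma \vee \widehat{0_\chi} = 1_{\widehat{\chi}}$ — whose only cross-colour links are the adjacent $(b_1,b_2)$ pairs — forbids any purely-left block and couples the two colours solely through those links. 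Hence every surviving term is a product of a $\kappa$-expression built from $(a_1,b_1)$ and one built from $(a_2,b_2)$.

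Next I would identify which monochromatic pairs of partitions survive, and here the bi-non-crossing condition is the crux: in the interleaved spatial order determined by $\widehat{\chi}$, colour-$1$ and colour-$2$ nodes may be joined only in the nested ``pair-$1$ before pair-$2$'' pattern, the crossing arrangement being excluded. The smallest instance already shows the mechanism: in $\kappa_{2,1}(a_1+a_2, b_1b_2)$ the left colouring $(a_1, a_2)$ yields the surviving term $\kappa_{1,1}(a_1,b_1)\kappa_{1,1}(a_2,b_2)$, while the colouring $(a_2,a_1)$ is killed by non-crossing. This is exactly what makes the answer a product of \emph{two} factors. To handle the general case I would isolate the right-hand chain of $m$ links and recognise it, through the observation preceding the pinched convolution (that $(\pi \cup \tau) \vee \sigma = 1_{2n}$ with $\pi \in NC'(n)$ forces $\tau = K(\pi)$), as the free multiplicative-convolution combinatorics of the free pair $b_1, b_2$, with the colour-$1$ and colour-$2$ right partitions being Kreweras complements along the chain.

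Finally I would pass to generating functions. The identities (\ref{eq:inversion-with-convolution}) and (\ref{eq:convolution-with-inverse-series}), applied to the cumulant functions of $b_1$ and $b_2$ (for which $\phi_f = c_{b_1}$, $\phi_g = c_{b_2}$, $\phi_{f \ast g} = c_{b_1 b_2}$), are what convert the $\ast$-chain in the right variable into a product after the substitution $w \mapsto c^{\inv}_{b_1 b_2}(w)$; in particular the extra factor of the variable in (\ref{eq:convolution-with-inverse-series}), namely $w\,\phi^{\inv}_{f \check{\ast} g}(w) = c^{\inv}_{b_1}(w)\,c^{\inv}_{b_2}(w)$, is precisely the source of the $\frac{1}{w}$ weighting that renders the cross term $P_1 P_2$ rather than $K_1 K_2$. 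I expect the main obstacle to be exactly this last synthesis: proving the non-crossing interleaving factorisation of the previous paragraph in full generality while simultaneously tracking the powers of $w$ under the inversion formulas, so that the reorganised sum lands on $P_1 + P_2 + P_1 P_2$ on the nose and not on some neighbouring expression.
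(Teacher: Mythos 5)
Your reduction is exactly the paper's: assume $\varphi(b_1)=\varphi(b_2)=1$ via Remark \ref{rem:T-b=1}, invoke Proposition \ref{prop:T-new-defn} for all three pairs, and reduce to the cumulant identity $P = P_1 + P_2 + P_1P_2$, i.e.\ equation (\ref{eq:T-eq-to-verify}). Your combinatorial engine is also the paper's: expand $\kappa_{n,m}(a_1+a_2,b_1b_2)$ by (\ref{eq:product-of-cumulants}) and multilinearity, kill mixed blocks by bi-freeness, observe that no block is purely left, recognize the Kreweras-complement/pinched-convolution structure in the right chain, and finish with (\ref{eq:inversion-with-convolution}) and (\ref{eq:convolution-with-inverse-series}). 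All of that is correct and correctly ordered.

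The genuine gap is the step you yourself flag as ``the main obstacle'': you never say how the sum over surviving partitions is organized so that it lands on $P_1+P_2+P_1P_2$, and this organization is the actual content of the paper's proof (Lemmas \ref{lem:T-case-1}--\ref{lem:T-case-3}). The key idea you are missing is to classify each surviving $\pi$ by the block $V_\pi$ containing $1_\ell$, and to split according to whether $V_\pi$ meets an even-indexed right node (a $b_2$) or only odd-indexed ones (only $b_1$'s). In the even case the join condition forces \emph{all} left nodes into $V_\pi$, and summing over the regions cut out by $V_\pi$ gives exactly $K_{a_2,b_2}(z,\phi_{g_2\check\ast g_1}(w))$, which becomes $\Theta_2$ after substitution; in the odd case one peels off a single factor $\kappa_{t,s}(a_1,b_1)$ from $V_\pi$ and the bottom region carries a \emph{recursive} copy of the even-type sum (the auxiliary series $\Psi_{o'}$ of Lemma \ref{lem:T-case-2}), which is what produces $\Theta_1 + \frac{1}{w}\Theta_1\Theta_2$. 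Note the asymmetry: the two factors of the product do not arise symmetrically as ``a pair-1 piece times a pair-2 piece'' per partition (each $\kappa_\pi$ is a product over \emph{many} monochromatic blocks), but from which colour owns the topmost left-and-right block. Your heuristic paragraph about nested versus crossing colour patterns and your $\kappa_{2,1}$ example are consistent with this but do not substitute for it; without the $V_\pi$-classification and the resulting recursion, the bookkeeping of the $\frac1w$ in the cross term cannot be completed, so the proof as written is an accurate plan rather than a proof.
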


To simplify the proof of the result, we will assume that $\varphi(b_1) = \varphi(b_2) = 1$.  Note $\varphi(b_1b_2) = 1$ by freeness of the right algebras in bi-free pairs.  Furthermore, we will let $g_j$ denote the multiplicative function associated to the cumulants of $b_j$ defined by $g_j(0_n, 1_n) = \kappa_n(b_j)$.  Recall if $g$ is the multiplicative function associated to the cumulants of $b_1b_2$, then $g = g_1 \ast g_2$.  Thus $\phi^{\inv}_{g}(w) = c^{\inv}_{b_1b_2}(w)$ and $\phi^{\inv}_{g_j}(w) = c^{\inv}_{b_j}(w)$.

By Proposition \ref{prop:T-new-defn}, it suffices to show that
\begin{align}
K_{a_1+a_2, b_1b_2}\left(z, \phi^{\inv}_{g}(w)\right) =  \Theta_1(z,w) + \Theta_2(z,w) + \frac{1}{w} \Theta_1(z,w)\Theta_2(z,w) \label{eq:T-eq-to-verify}
\end{align}
where
\[
\Theta_j(z,w) = K_{a_j, b_j}\left(z, \phi^{\inv}_{g_j}(w)\right).
\]

Recall
\[
K_{a_1+a_2, b_1b_2}(z,w) = \sum_{n,m\geq 1} \kappa_{n,m}(a_1+a_2, b_1b_2) z^n w^m.
\]
For fix $n,m \geq 1$, let $\sigma_{n,m}$ denote the element of $BNC(n,2m)$ with blocks $\{\{k_\ell\}\}_{k=1}^n \cup \{\{(2k-1)_r, (2k)_r\}\}_{k=1}^m$.  Thus equation (\ref{eq:product-of-cumulants}) implies that
\[
\kappa_{n,m}(a_1+a_2, b_1b_2) = \sum_{\substack{ \pi \in BNC(n, 2m) \\ \pi \vee \sigma_{n,m} = 1_{n,2m} }} \kappa_\pi(\underbrace{a_1+a_2, \ldots, a_1+a_2}_n, \underbrace{b_1, b_2, b_1, b_2, \ldots, b_1, b_2}_{b_1 \text{ occurs }m \text{ times}}).
\]
Notice that if $\pi  \in BNC(n, 2m)$ and $\pi \vee \sigma_{n,m} = 1_{n,2m}$, then any block of $\pi$ containing a $k_\ell$ must contain a $j_r$ for some $j$.  Furthermore, if $1 \leq k < j \leq n$ are such that $k_\ell$ and $j_\ell$ are in the same block of $\pi$, then $q_\ell$ must be in the same block as $k_\ell$ for all $k \leq q \leq j$.  Moreover, since $(a_1, b_1)$ and $(a_2, b_2)$ are bi-free, we note that 
\[
\kappa_\pi(\underbrace{a_1+a_2, \ldots, a_1+a_2}_n, \underbrace{b_1, b_2, b_1, b_2, \ldots, b_1, b_2}_{b_1 \text{ occurs }m \text{ times}}) = 0
\]
if $\pi$ contains a block containing a $(2k)_r$ and a $(2j-1)_r$ for some $k, j$.

For $n,m\geq 1$, let $BNC_T(n,m)$ denote all $\pi \in BNC(n, 2m)$ such that $\pi \vee \sigma_{n,m} = 1_{n,2m}$ and $\pi$ contains no blocks containing both a $(2k)_r$ and a $(2j-1)_r$ for some $k, j$.  Consequently, we obtain
\[
K_{a_1+a_2, b_1b_2}(z,w) = \sum_{n,m\geq 1} \left(\sum_{\pi \in BNC_T(n,m)} \kappa_\pi(\underbrace{a_1+a_2, \ldots, a_1+a_2}_n, \underbrace{b_1, b_2, b_1, b_2, \ldots, b_1, b_2}_{b_1 \text{ occurs }m \text{ times}})   \right) z^n w^m.
\]
We desire to divide up this sum into two parts based on types of partitions in $BNC_T(n,m)$.  Let $BNC_T(n,m)_e$ denote all $\pi \in BNC_T(n,m)$ such that the block containing $1_\ell$ also contains a $(2k)_r$ for some $k$, and let $BNC_T(n,m)_o$ denote all $\pi \in BNC_T(n,m)$ such that the block containing $1_\ell$ also contains a $(2k-1)_r$ for some $k$.  Note $BNC_T(n,m)_e$ and $BNC_T(n,m)_o$ are disjoint and $BNC_T(n,m)_e \cup BNC_T(n,m)_o = BNC_T(n,m)$ by previous discussions.  Therefore, if for $d \in \{o,e\}$ we define
\[
\Psi_d(z,w) := \sum_{n,m\geq 1} \left(\sum_{\pi \in BNC_T(n,m)_d} \kappa_\pi(\underbrace{a_1+a_2, \ldots, a_1+a_2}_n, \underbrace{b_1, b_2, b_1, b_2, \ldots, b_1, b_2}_{b_1 \text{ occurs }m \text{ times}})   \right) z^n w^m,
\]
then
\[
K_{a_1+a_2, b_1b_2}(z,w) = \Psi_e(z,w) + \Psi_o(z,w).
\]
We will derive expressions for $\Psi_e(z,w)$ and $\Psi_o(z,w)$ beginning with $\Psi_e(z,w)$.

\begin{lem}
\label{lem:T-case-1}
Under the above notation and assumptions,
\[
\Psi_e(z,w) = K_{a_2, b_2}\left(z, \phi_{g_2 \check{\ast} g_1}(w) \right). 
\]
\end{lem}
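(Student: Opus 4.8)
The plan is to reorganize the sum defining $\Psi_e(z,w)$ according to the block structure of each $\pi \in BNC_T(n,m)_e$ and to exhibit a bijection whose contributions reproduce the coefficients of $K_{a_2,b_2}\!\left(z, \phi_{g_2 \check{\ast} g_1}(w)\right)$. First I would fix the types of all nodes: since the left entries equal $a_1 + a_2$, multilinearity of $\kappa_\pi$ together with the vanishing of mixed $(\ell,r)$-cumulants forces every block of $\pi$ to be purely of type $1$ (entries drawn from $(a_1,b_1)$) or purely of type $2$ (entries from $(a_2,b_2)$); the even right nodes $(2k)_r$ carry $b_2$ and are type $2$, while the odd right nodes $(2k-1)_r$ carry $b_1$ and are type $1$. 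The defining feature of $BNC_T(n,m)_e$ is that the block of $1_\ell$ reaches an even right node, so it is type $2$. Working in the bi-non-crossing order (the left nodes $1_\ell < \cdots < n_\ell$ occupying the start of the spine, the right nodes following in decreasing index) and using $\pi \vee \sigma_{n,m} = 1_{n,2m}$, I would show that all $n$ left nodes in fact lie in a \emph{single} block $V_0$: a left node in a different block, or a type-$1$ block meeting a left node, would either cross the type-$2$ block that reaches an even node or break the join condition. This $V_0$ then consists of the $n$ left nodes together with $q \ge 1$ even right nodes, the ``representatives,'' and being a single block it contributes exactly $\kappa_{n,q}(a_2,b_2)$.

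Next I would decompose the remaining right nodes into $q$ clusters, one per representative, by tracing connectivity through the pairings $\{(2j-1)_r,(2j)_r\}$ of $\sigma_{n,m}$. A cluster of $l$ pairs carries $l$ even nodes ($b_2$) and $l$ odd nodes ($b_1$); since its representative is pulled out into $V_0$, the representative is a singleton in the cluster's induced even-node partition $\tau$, so $\tau \in NC'(l)$. The crux is the observation recorded just before the definition of pinched-convolution: for such $\tau$, the unique odd-node partition rendering the union non-crossing and joining (through $\sigma_{n,m}$) to the full partition is the Kreweras complement $K(\tau)$. Hence the cluster contributes $\sum_{\tau \in NC'(l)} g_2(0_l,\tau)\, g_1(0_l,K(\tau)) = (g_2 \check{\ast} g_1)(0_l,1_l)$, precisely the coefficient of $w^l$ in $\phi_{g_2 \check{\ast} g_1}(w)$.

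Finally I would assemble the pieces: giving $\pi \in BNC_T(n,m)_e$ contributing to $z^n w^m$ is equivalent to giving the spine block $V_0$ (with $n$ left legs and $q$ representatives) together with $q$ clusters of sizes $l_1, \ldots, l_q$ satisfying $\sum_i l_i = m$ and their internal $NC'$/Kreweras data. Since $V_0$ is a single block and distinct clusters contribute independently, $\kappa_\pi = \kappa_{n,q}(a_2,b_2) \prod_{i=1}^q (g_2 \check{\ast} g_1)(0_{l_i},1_{l_i})$. Summing over $q$, over the compositions $l_1 + \cdots + l_q = m$, and over the internal cluster data recovers $\sum_{q\ge 1}\kappa_{n,q}(a_2,b_2)\,\big(\phi_{g_2 \check{\ast} g_1}(w)\big)^q$ as the coefficient of $z^n$, which is exactly the coefficient of $z^n$ in $K_{a_2,b_2}\!\left(z,\phi_{g_2 \check{\ast} g_1}(w)\right)$.

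I expect the \textbf{main obstacle} to be the geometric step in the first paragraph---forcing all left nodes into the single spine block $V_0$ (which is what makes the \emph{full} cumulant $\kappa_{n,q}(a_2,b_2)$, rather than a product of smaller ones, appear)---together with the verification that, cluster by cluster, the non-crossing and join constraints reduce \emph{exactly} to the pair $(\tau, K(\tau))$ with $\tau \in NC'(l)$, so that pinched-convolution, and not some other combination of $g_1$ and $g_2$, emerges.
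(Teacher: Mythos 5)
Your proposal is correct and follows essentially the same route as the paper: isolate the block of $1_\ell$ (which the join condition forces to contain all left nodes together with a set of even right nodes ending at $(2m)_r$), observe that it contributes a full cumulant $\kappa_{n,s}(a_2,b_2)$, and identify each region between consecutive even representatives with a pair $(\tau, K(\tau))$, $\tau \in NC'(j_q)$, so that summing over the internal data yields $(g_2 \check{\ast} g_1)(0_{j_q}, 1_{j_q})$ and resummation gives $K_{a_2,b_2}(z, \phi_{g_2 \check{\ast} g_1}(w))$. The two points you flag as the main obstacles are exactly the ones the paper disposes of via the discussion preceding the definition of $BNC_T(n,m)_e$ and the observation recorded just before the definition of pinched-convolution.
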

\begin{proof}
For each $n,m\geq 1$, we desire to rearrange the sum in $\Psi_e(z,w)$ by expanding $\kappa_\pi$ as a product of full $(\ell, r)$-cumulants and summing over all $\pi$ with the same block containing $1_\ell$.  

Fix $n,m \geq 1$.  If $\pi \in BNC_T(n,m)_e$, then the block $V_\pi$ containing $1_\ell$ must also contain $(2k)_r$ for some $k$ and thus all of $(2m)_r, 1_\ell, 2_\ell, \ldots, n_\ell$ must be in $V_\pi$ in order for $\pi \vee \sigma_{n,m} = 1_{n,2m}$.  Below is an example of such a $\pi$.  Two nodes can be connected to each other with a solid line if and only if lie in the same block of $\pi$ and two nodes are connected with a dotted line if and only if they are in the same block of $\sigma_{n,m}$.  The condition $\pi \vee \sigma_{n,m} = 1_{n,2m}$ means one may travel from any one node to another using a combination of solid and dotted lines.
Note we really should draw all of the left nodes above all of the right notes, but we do not do so in order to save space.
\begin{align*}
	\begin{tikzpicture}[baseline]
	\draw[thick, dashed] (-1,5.75) -- (-1,-.25) -- (1,-.25) -- (1,5.75);
	\draw[thick, blue, densely dotted] (1, 5.5) -- (0.75, 5.25) -- (1, 5);
	\draw[thick, blue, densely dotted] (1, 4.5) -- (0.75, 4.25) -- (1, 4);
	\draw[thick, blue, densely dotted] (1, 3.5) -- (0.75, 3.25) -- (1, 3);
	\draw[thick, blue, densely dotted] (1, 2.5) -- (0.75, 2.25) -- (1, 2);
	\draw[thick, blue, densely dotted] (1, 1.5) -- (0.75, 1.25) -- (1, 1);
	\draw[thick, blue, densely dotted] (1, 0.5) -- (0.75, 0.25) -- (1, 0);
	\draw[ggreen, thick] (-1,5.5) -- (-0.5,5.5) -- (-.5,0) -- (1, 0);
	\draw[thick] (1,1.5) -- (0,1.5) -- (0,3.5) -- (1, 3.5);
	\draw[thick] (1,4.5) -- (0,4.5) -- (0,5.5) -- (1, 5.5);
	\draw[thick] (1,2) -- (0.5,2) -- (0.5,3) -- (1, 3);
	\draw[ggreen, thick] (-1, 5) -- (-0.5, 5);
	\draw[ggreen, thick] (-1, 4.5) -- (-0.5, 4.5);
	\draw[ggreen, thick] (-1, 4) -- (-0.5, 4);
	\draw[ggreen, thick] (-1, 3.5) -- (-0.5, 3.5);
	\draw[ggreen, thick] (1, 4) -- (-0.5, 4);
	\draw[ggreen, thick] (1, 1) -- (-0.5, 1);
	\node[left] at (-1, 5.5) {$1_\ell$};
	\draw[ggreen, fill=ggreen] (-1,5.5) circle (0.05);
	\node[left] at (-1, 5) {$2_\ell$};
	\draw[ggreen, fill=ggreen] (-1,5) circle (0.05);
	\node[left] at (-1, 4.5) {$3_\ell$};
	\draw[ggreen, fill=ggreen] (-1,4.5) circle (0.05);
	\node[left] at (-1, 4) {$4_\ell$};
	\draw[ggreen, fill=ggreen] (-1,4) circle (0.05);
	\node[left] at (-1, 3.5) {$5_\ell$};
	\draw[ggreen, fill=ggreen] (-1,3.5) circle (0.05);
	\draw[fill=black] (1,5.5) circle (0.05);
	\node[right] at (1,5.5) {$1_r$};
	\draw[fill=black] (1,5) circle (0.05);
	\node[right] at (1,5) {$2_r$};
	\draw[fill=black] (1,4.5) circle (0.05);
	\node[right] at (1,4.5) {$3_r$};
	\draw[ggreen, fill=ggreen] (1,0) circle (0.05);
	\node[right] at (1,4) {$4_r$};
	\draw[ggreen, fill=ggreen] (1,4) circle (0.05);
	\node[right] at (1,3.5) {$5_r$};
	\draw[fill=black] (1,3.5) circle (0.05);
	\node[right] at (1,3) {$6_r$};
	\draw[fill=black] (1,3) circle (0.05);
	\node[right] at (1,2.5) {$7_r$};
	\draw[fill=black] (1,2.5) circle (0.05);
	\node[right] at (1,2) {$8_r$};
	\draw[fill=black] (1,2) circle (0.05);
	\node[right] at (1,1.5) {$9_r$};
	\draw[fill=black] (1,1.5) circle (0.05);
	\node[right] at (1,1) {$10_r$};
	\draw[ggreen, fill=ggreen] (1,1) circle (0.05);
	\node[right] at (1,0.5) {$11_r$};
	\draw[fill=black] (1,0.5) circle (0.05);
	\node[right] at (1,0) {$12_r$};
	\end{tikzpicture}
\end{align*}

Let $E = \{(2k)_r\}^{m}_{k=1}$, let $O = \{(2k-1)_r\}^{m}_{k=1}$, let $s$ denote the number of elements of $E$ contained in $V_\pi$ (so $s \geq 1$), and let $1 \leq k_1 < k_2 < \cdots < k_s = m$ be such that $(2k_q)_r \in V_\pi$.  Note $V_\pi$ divides the right nodes into $s$ disjoint regions.  For each $1 \leq q \leq s$, let $j_q = k_q - k_{q-1}$, where $k_0 = 0$, and let $\pi_q$ denote the non-crossing partition obtained by restricting $\pi$ to $\{(2k_{q-1} + 1)_r, (2k_{q-1}+2)_r, \ldots, (2k_{q}-1)_r\}$.  Note that $\sum^s_{q=1} j_q = m$.  Furthermore, if $\pi'_q$ is obtained from $\pi_q$ by adding the singleton block $\{(2k_q)_r\}$, then $\pi'_q|_{E}$ is naturally an element of $NC'(j_q)$ and $\pi'_q|_O$ is naturally an element of $NC(j_q)$, which must be $K(\pi'_q|_E)$ in order for $\pi \vee \sigma_{n,m} = 1_{n,2m}$.  The below diagram demonstrates an example of this restriction.
\begin{align*}
	\begin{tikzpicture}[baseline]
	\draw[thick] (1.5, 2.25) -- (2.5, 2.25) -- (2.4, 2.15);
	\draw[thick] (2.5,2.25) -- (2.4, 2.35);
	\draw[thick, dashed] (-1,5.75) -- (-1,-.25) -- (1,-.25) -- (1,5.75);
	\draw[thick, blue, densely dotted] (1, 5.5) -- (0.75, 5.25) -- (1, 5);
	\draw[thick, blue, densely dotted] (1, 4.5) -- (0.75, 4.25) -- (1, 4);
	\draw[thick, blue, densely dotted] (1, 3.5) -- (0.75, 3.25) -- (1, 3);
	\draw[thick, blue, densely dotted] (1, 2.5) -- (0.75, 2.25) -- (1, 2);
	\draw[thick, blue, densely dotted] (1, 1.5) -- (0.75, 1.25) -- (1, 1);
	\draw[thick, blue, densely dotted] (1, 0.5) -- (0.75, 0.25) -- (1, 0);
	\draw[ggreen, thick] (-1,5.5) -- (-0.5,5.5) -- (-.5,0) -- (1, 0);
	\draw[thick] (1,1.5) -- (0,1.5) -- (0,3.5) -- (1, 3.5);
	\draw[thick] (1,4.5) -- (0,4.5) -- (0,5.5) -- (1, 5.5);
	\draw[thick] (1,2) -- (0.5,2) -- (0.5,3) -- (1, 3);
	\draw[ggreen, thick] (-1, 5) -- (-0.5, 5);
	\draw[ggreen, thick] (-1, 4.5) -- (-0.5, 4.5);
	\draw[ggreen, thick] (-1, 4) -- (-0.5, 4);
	\draw[ggreen, thick] (-1, 3.5) -- (-0.5, 3.5);
	\draw[ggreen, thick] (1, 4) -- (-0.5, 4);
	\draw[ggreen, thick] (1, 1) -- (-0.5, 1);
	\node[left] at (-1, 5.5) {$1_\ell$};
	\draw[ggreen, fill=ggreen] (-1,5.5) circle (0.05);
	\node[left] at (-1, 5) {$2_\ell$};
	\draw[ggreen, fill=ggreen] (-1,5) circle (0.05);
	\node[left] at (-1, 4.5) {$3_\ell$};
	\draw[ggreen, fill=ggreen] (-1,4.5) circle (0.05);
	\node[left] at (-1, 4) {$4_\ell$};
	\draw[ggreen, fill=ggreen] (-1,4) circle (0.05);
	\node[left] at (-1, 3.5) {$5_\ell$};
	\draw[ggreen, fill=ggreen] (-1,3.5) circle (0.05);
	\draw[fill=black] (1,5.5) circle (0.05);
	\node[right] at (1,5.5) {$1_r$};
	\draw[fill=black] (1,5) circle (0.05);
	\node[right] at (1,5) {$2_r$};
	\draw[fill=black] (1,4.5) circle (0.05);
	\node[right] at (1,4.5) {$3_r$};
	\draw[ggreen, fill=ggreen] (1,0) circle (0.05);
	\node[right] at (1,4) {$4_r$};
	\draw[ggreen, fill=ggreen] (1,4) circle (0.05);
	\node[right] at (1,3.5) {$5_r$};
	\draw[fill=black] (1,3.5) circle (0.05);
	\node[right] at (1,3) {$6_r$};
	\draw[fill=black] (1,3) circle (0.05);
	\node[right] at (1,2.5) {$7_r$};
	\draw[fill=black] (1,2.5) circle (0.05);
	\node[right] at (1,2) {$8_r$};
	\draw[fill=black] (1,2) circle (0.05);
	\node[right] at (1,1.5) {$9_r$};
	\draw[fill=black] (1,1.5) circle (0.05);
	\node[right] at (1,1) {$10_r$};
	\draw[ggreen, fill=ggreen] (1,1) circle (0.05);
	\node[right] at (1,0.5) {$11_r$};
	\draw[fill=black] (1,0.5) circle (0.05);
	\node[right] at (1,0) {$12_r$};
	\end{tikzpicture}
	\quad
	\begin{tikzpicture}[baseline]
	\draw[thick, dashed]  (1,.75) -- (1,3.75);
	\draw[thick, blue, densely dotted] (1, 3.5) -- (0.75, 3.25) -- (1, 3);
	\draw[thick, blue, densely dotted] (1, 2.5) -- (0.75, 2.25) -- (1, 2);
	\draw[thick, blue, densely dotted] (1, 1.5) -- (0.75, 1.25) -- (1, 1);
	\draw[thick, red] (1,1.5) -- (0,1.5) -- (0,3.5) -- (1, 3.5);
	\draw[thick] (1,2) -- (0.5,2) -- (0.5,3) -- (1, 3);
	\node[right] at (1,3.5) {$5_r$};
	\draw[red, fill=red] (1,3.5) circle (0.05);
	\node[right] at (1,3) {$6_r$};
	\draw[fill=black] (1,3) circle (0.05);
	\node[right] at (1,2.5) {$7_r$};
	\draw[fill=black] (1,2.5) circle (0.05);
	\node[right] at (1,2) {$8_r$};
	\draw[fill=black] (1,2) circle (0.05);
	\node[right] at (1,1.5) {$9_r$};
	\draw[red, fill=red] (1,1.5) circle (0.05);
	\node[right] at (1,1) {$10_r$};
	\draw[ggreen, fill=ggreen] (1,1) circle (0.05);
	\end{tikzpicture}
\end{align*}
Consequently, by writing $\kappa_\pi$ as a product of cumulants, using linearity of $\kappa_\pi$, and using the fact that $(a_1, b_1)$ and $(a_2, b_2)$ are bi-free (and implicitly using $\varphi(b_2) = 1$), we obtain
\[
\kappa_\pi(\underbrace{a_1+a_2, \ldots, a_1+a_2}_n, \underbrace{b_1, b_2, b_1, b_2, \ldots, b_1, b_2}_{b_1 \text{ occurs }m \text{ times}}) z^n w^m = \kappa_{n, s}(a_2, b_2)z^n \prod^s_{q=1} g_2(0_{j_q}, \pi'_q) g_1(0_{j_q}, K(\pi'_q)) w^{j_q}.
\]
Consequently, summing over all $\rho \in BNC_T(n,m)_e$ with $V_\rho = V_\pi$, we obtain 
\[
\sum_{\substack{\rho \in BNC_T(n,m)_e \\ V_\rho = V_\pi  }} \kappa_\rho(\underbrace{a_1+a_2, \ldots, a_1+a_2}_n, \underbrace{b_1, b_2, b_1, b_2, \ldots, b_1, b_2}_{b_1 \text{ occurs }m \text{ times}}) z^n w^m = \kappa_{n, s}(a_2, b_2)z^n \prod^s_{q=1} (g_2 \check{\ast} g_1)(0_{j_q}, 1_{j_q}) w^{j_q}.
\]

Finally, if we sum over all possible $n,m\geq 1$ and all possible $V_\pi$ (so, in the above equation, we get all possible $s\geq 1$ and all possible $j_q \geq 1$), we obtain that 
\begin{align*}
\Psi_e(z,w) = \sum_{n,s \geq 1} \kappa_{n, s}(a_2, b_2)z^n \prod^s_{q=1} \phi_{g_2 \check{\ast} g_1}(w) =\sum_{n,s \geq 1} \kappa_{n, s}(a_2, b_2)z^n \left(\phi_{g_2 \check{\ast} g_1}(w)\right)^s = K_{a_2, b_2}\left(z, \phi_{g_2 \check{\ast} g_1}(w) \right)
\end{align*}
as desired.
\end{proof}

In order to discuss $\Psi_o(z,w)$, it will be quite helpful to discuss a subcase.  For $n,m \geq 0$, let $\sigma'_{n,m} $ denote the element of $BNC(n, 2m+1)$ with blocks $\{\{k_\ell\}\}_{k=1}^n \cup \{1_r\} \cup \{\{(2k)_r, (2k+1)_r\}\}_{k=1}^m$.  Let $BNC_T(n,m)'_o$ denote the set of all $\pi \in BNC(n,2m+1)$ such that $\pi \vee \sigma'_{n,m} = 1_{n,2m+1}$ and $\pi$ contains no blocks containing both a $(2k)_r$ and a $(2j-1)_r$ for some $k, j$.

\begin{lem}
\label{lem:T-case-2}
Under the above notation and assumptions, if
\begin{align*}
\Psi_{o'}(z,w) := \sum_{\substack{n\geq 1 \\ m\geq 0}} \left(\sum_{\pi \in BNC_T(n,m)'_o} \kappa_\pi(\underbrace{a_1+a_2, \ldots, a_1+a_2}_n, \underbrace{b_2, b_1, b_2, b_1, \ldots, b_1, b_2}_{b_1 \text{ occurs }m \text{ times}})   \right) z^n w^{m+1},
\end{align*}
then
\[
\Psi_{o'}(z,w) = \frac{w }{\phi_{g_2 \check{\ast} g_1}(w)} K_{a_2, b_2}\left(z, \phi_{g_2 \check{\ast} g_1}(w) \right).
\]
\end{lem}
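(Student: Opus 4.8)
The plan is to compute $\Psi_{o'}$ by exactly the strategy used in the proof of Lemma \ref{lem:T-case-1}: fix $n\ge 1$ and $m\ge 0$, isolate the block $V_\pi$ of $1_\ell$ for each $\pi\in BNC_T(n,m)'_o$, expand $\kappa_\pi$ into a product of full $(\ell,r)$-cumulants, then sum first over all partitions sharing a fixed $V_\pi$ and finally over all $V_\pi$ and all $n,m$. Throughout I use that in this labeling the odd right nodes $(2t+1)_r$ carry $b_2$ and the even right nodes $(2t)_r$ carry $b_1$.

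First I would pin down the shape of $V_\pi$. Since $\{1_r\}$ and all the left nodes are $\sigma'_{n,m}$-singletons, the condition $\pi\vee\sigma'_{n,m}=1_{n,2m+1}$ forces, just as in Lemma \ref{lem:T-case-1}, that $V_\pi$ contains every left node together with the extreme right nodes $1_r$ and $(2m+1)_r$; and because $\pi$ has no block meeting both an even and an odd right node, every right node of $V_\pi$ is odd and hence carries $b_2$. Writing the odd right nodes of $V_\pi$ as $1_r=(2t_0+1)_r,\,(2t_1+1)_r,\dots,(2t_s+1)_r=(2m+1)_r$ with $0=t_0<t_1<\dots<t_s=m$, these $s+1$ nodes cut the remaining right nodes into regions, the $q$-th region ($1\le q\le s$) consisting of the nodes strictly between $(2t_{q-1}+1)_r$ and $(2t_q+1)_r$, while the leading node $1_r$ sits at the top and bounds an empty region.

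Next I would read off the factorization and sum. By bi-freeness (and linearity in each $a_1+a_2$ slot), the block $V_\pi$ meets a $b_2$-node so all its left entries must be $a_2$; it thus has $n$ left entries equal to $a_2$ and $s+1$ right entries equal to $b_2$, contributing $\kappa_{n,s+1}(a_2,b_2)$. Setting $j_q=t_q-t_{q-1}$, the $q$-th region contains $j_q$ even ($b_1$) nodes and $j_q-1$ odd ($b_2$) nodes; appending the bounding node $(2t_q+1)_r$ of $V_\pi$ as a phantom singleton makes the restriction to the $b_2$-nodes an element of $NC'(j_q)$ whose Kreweras complement governs the $b_1$-nodes, exactly as in Lemma \ref{lem:T-case-1}, so summing over all $\rho$ with $V_\rho=V_\pi$ replaces each region by $(g_2\check{\ast}g_1)(0_{j_q},1_{j_q})\,w^{j_q}$, while the leading singleton $1_r$ contributes only the weight $w$ (its region is empty and $g_2(0_1,1_1)=1$). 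Summing over $j_1,\dots,j_s\ge1$ turns each region factor into $\phi_{g_2\check{\ast}g_1}(w)$, and summing over $s\ge0$ and $n\ge1$ gives
\[
\Psi_{o'}(z,w)=w\sum_{n\ge1}\sum_{s\ge0}\kappa_{n,s+1}(a_2,b_2)\,z^n\,\bigl(\phi_{g_2\check{\ast}g_1}(w)\bigr)^s.
\]
Since $K_{a_2,b_2}\!\left(z,\phi_{g_2\check{\ast}g_1}(w)\right)=\sum_{n\ge1}\sum_{s'\ge1}\kappa_{n,s'}(a_2,b_2)\,z^n\bigl(\phi_{g_2\check{\ast}g_1}(w)\bigr)^{s'}$, re-indexing $s'=s+1$ yields precisely $\Psi_{o'}(z,w)=\frac{w}{\phi_{g_2\check{\ast}g_1}(w)}K_{a_2,b_2}\!\left(z,\phi_{g_2\check{\ast}g_1}(w)\right)$.

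The main obstacle should be the structural first step, and in particular the treatment of the leading $\sigma'$-singleton $1_r$. One must argue that $1_r$ is forced into $V_\pi$ and, crucially, that it carries an isolated factor of $w$ rather than being absorbed into a pinched-convolution factor. Getting this degenerate top region's $w$-bookkeeping right — so that the total weight is $w^{m+1}$ and not $w^{m}$ — is exactly what produces the distinctive prefactor $w/\phi_{g_2\check{\ast}g_1}(w)$, and is the only place where the argument genuinely departs from the proof of Lemma \ref{lem:T-case-1}.
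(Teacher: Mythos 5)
Your proposal is correct and follows essentially the same route as the paper: force $V_\pi$ to contain all left nodes and the odd right nodes $1_r,(2m+1)_r$, cut the remaining right nodes into regions between consecutive $V_\pi$-nodes, identify each region's sum as a pinched-convolution coefficient $(g_2\check{\ast}g_1)(0_{j_q},1_{j_q})w^{j_q}$, and track the one leftover factor of $w$ before resumming into $\frac{w}{\phi_{g_2\check{\ast}g_1}(w)}K_{a_2,b_2}\left(z,\phi_{g_2\check{\ast}g_1}(w)\right)$. The only cosmetic difference is that you adjoin the bounding node \emph{below} each region as the phantom singleton (as the paper does in Lemma \ref{lem:T-case-1}) whereas the paper's proof of this lemma adjoins the one \emph{above}; by multiplicativity of $g_1,g_2$ and the behaviour of the Kreweras complement under this relabelling, both conventions yield the same region sum.
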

\begin{proof}
For each $n,m\geq 1$, we desire to rearrange the sum in $\Psi_{o'}(z,w)$ by expanding $\kappa_\pi$ as a product of full $(\ell, r)$-cumulants and summing over all $\pi$ with the same block containing $1_\ell$.  

Fix $n \geq 1$ and $m \geq 0$.  If $\pi \in BNC_T(n,m)'_o$, then the block $V_\pi$ containing $1_\ell$ must contain $1_r, (2m+1)_r, 1_\ell, 2_\ell, \ldots, n_\ell$ in order for $\pi \vee \sigma'_{n,m} = 1_{n,2m+1}$.  Below is an example of such a $\pi$. 
\begin{align*}
	\begin{tikzpicture}[baseline]
	\draw[thick, dashed] (-1,5.75) -- (-1,.25) -- (1,.25) -- (1,5.75);
	\draw[thick, blue, densely dotted] (1, 5) -- (0.75, 4.75) -- (1, 4.5);
	\draw[thick, blue, densely dotted] (1, 4) -- (0.75, 3.75) -- (1, 3.5);
	\draw[thick, blue, densely dotted] (1, 3) -- (0.75, 2.75) -- (1, 2.5);
	\draw[thick, blue, densely dotted] (1, 2) -- (0.75, 1.75) -- (1, 1.5);
	\draw[thick, blue, densely dotted] (1, 1) -- (0.75, 0.75) -- (1, 0.5);
	\draw[ggreen, thick] (-1,5.5) -- (-0.5,5.5) -- (-.5,0.5) -- (1, 0.5);
	\draw[thick] (1,4.5) -- (0.5,4.5) -- (.5,3.5) -- (1, 3.5);
	\draw[thick] (1,5) -- (-0,5) -- (-0,2) -- (1, 2);
	\draw[thick] (1,3) -- (-0,3);
	\draw[ggreen, thick] (-1, 5) -- (-0.5, 5);
	\draw[ggreen, thick] (-1, 4.5) -- (-0.5, 4.5);
	\draw[ggreen, thick] (-1, 4) -- (-0.5, 4);
	\draw[ggreen, thick] (-1, 3.5) -- (-0.5, 3.5);
	\draw[ggreen, thick] (1, 1.5) -- (-0.5, 1.5);
	\draw[ggreen, thick] (1, 5.5) -- (-0.5, 5.5);
	\node[left] at (-1, 5.5) {$1_\ell$};
	\draw[ggreen, fill=ggreen] (-1,5.5) circle (0.05);
	\node[left] at (-1, 5) {$2_\ell$};
	\draw[ggreen, fill=ggreen] (-1,5) circle (0.05);
	\node[left] at (-1, 4.5) {$3_\ell$};
	\draw[ggreen, fill=ggreen] (-1,4.5) circle (0.05);
	\node[left] at (-1, 4) {$4_\ell$};
	\draw[ggreen, fill=ggreen] (-1,4) circle (0.05);
	\node[left] at (-1, 3.5) {$5_\ell$};
	\draw[ggreen, fill=ggreen] (-1,3.5) circle (0.05);
	\draw[ggreen, fill=ggreen] (1,5.5) circle (0.05);
	\node[right] at (1,5.5) {$1_r$};
	\draw[fill=black] (1,5) circle (0.05);
	\node[right] at (1,5) {$2_r$};
	\draw[fill=black] (1,4.5) circle (0.05);
	\node[right] at (1,4.5) {$3_r$};
	\node[right] at (1,4) {$4_r$};
	\draw[fill=black] (1,4) circle (0.05);
	\node[right] at (1,3.5) {$5_r$};
	\draw[fill=black] (1,3.5) circle (0.05);
	\node[right] at (1,3) {$6_r$};
	\draw[fill=black] (1,3) circle (0.05);
	\node[right] at (1,2.5) {$7_r$};
	\draw[fill=black] (1,2.5) circle (0.05);
	\node[right] at (1,2) {$8_r$};
	\draw[fill=black] (1,2) circle (0.05);
	\node[right] at (1,1.5) {$9_r$};
	\draw[ggreen, fill=ggreen] (1,1.5) circle (0.05);
	\node[right] at (1,1) {$10_r$};
	\draw[fill=black] (1,1) circle (0.05);
	\node[right] at (1,0.5) {$11_r$};
	\draw[ggreen, fill=ggreen] (1,0.5) circle (0.05);
	\end{tikzpicture}
\end{align*}

Let $E = \{(2k)_r\}^{m}_{k=1}$, let $O = \{(2k-1)_r\}^{m+1}_{k=1}$, let $s$ denote the number of elements of $O$ contained in $V_\pi$ (so $s \geq 1$), and let $1 = k_1 < k_2 < \cdots < k_s = m+1$ be such that $(2k_q-1)_r \in V_\pi$. Note $V_\pi$ divides the right nodes into $s-1$ disjoint regions.  For each $1 \leq q \leq s-1$, let $j_q = k_{q+1} - k_{q}$ and let $\pi_q$ denote the non-crossing partition obtained by restricting $\pi$ to $\{(2k_{q} )_r, (2k_{q}+1)_r, \ldots, (2k_{q+1}-2)_r\}$.  Note that $\sum^{s-1}_{q=1} j_q = m$.  Furthermore, if $\pi'_q$ is obtained from $\pi_q$ by adding the singleton block $\{(2k_q-1)_r\}$, then $\pi'_q|_{O}$ is naturally an element of $NC'(j_q)$ and $\pi'_q|_E$ is naturally an element of $NC(j_q)$, which must be $K(\pi'_q|_O)$ in order for $\pi \vee \sigma'_{n,m} = 1_{n,2m+1}$.  Consequently, by writing $\kappa_\pi$ as a product of cumulants, using linearity of $\kappa_\pi$, and using the fact that $(a_1, b_1)$ and $(a_2, b_2)$ are bi-free (and implicitly using $\varphi(b_2) = 1$), we obtain
\[
\kappa_\pi(\underbrace{a_1+a_2, \ldots, a_1+a_2}_n, \underbrace{b_2, b_1, b_2, b_1, \ldots, b_1, b_2}_{b_1 \text{ occurs }m \text{ times}}) z^n w^{m+1}  = \kappa_{n, s}(a_2, b_2)z^n w \prod^{s-1}_{q=1} g_2(0_{j_q}, \pi'_q) g_1(0_{j_q}, K(\pi'_q)) w^{j_q}.
\]
Consequently, summing over all $\rho \in BNC_T(n,m)'_o$ with $V_\rho = V_\pi$, we obtain 
\begin{align*}
\sum_{\substack{\rho \in BNC_T(n,m)'_o \\ V_\rho = V_\pi  }}\kappa_\pi &(\underbrace{a_1+a_2, \ldots, a_1+a_2}_n, \underbrace{b_2, b_1, b_2, b_1, \ldots, b_1, b_2}_{b_1 \text{ occurs }m \text{ times}}) z^n w^{m+1}\\ &= \kappa_{n, s}(a_2, b_2)z^n w \prod^{s-1}_{q=1} (g_2 \check{\ast} g_1)(0_{j_q}, 1_{j_q}) w^{j_q}.
\end{align*}

Finally, if we sum over all possible $n \geq 1$, $m\geq 0$, and all possible $V_\pi$ (so, in the above equation, we get all possible $s\geq 1$ and all possible $j_q\geq 1$), we obtain that
\begin{align*}
\Psi_{o'}(z,w) &= \sum_{n,s \geq 1} \kappa_{n, s}(a_2, b_2)z^n w \prod^{s-1}_{q=1} \phi_{g_2 \check{\ast} g_1}(w) \\
&= \frac{w}{ \phi_{g_2 \check{\ast} g_1}(w) }\sum_{n,s \geq 1} \kappa_{n, s}(a_2, b_2)z^n (\phi_{g_2 \check{\ast} g_1}(w))^s \\
&= \frac{w }{\phi_{g_2 \check{\ast} g_1}(w)} K_{a_2, b_2}\left(z, \phi_{g_2 \check{\ast} g_1}(w) \right). \qedhere
\end{align*}
\end{proof}

\begin{lem}
\label{lem:T-case-3}
Under the above notation and assumptions, 
\[
\Psi_o(z,w) =  \left(1 + \frac{1}{\phi_{g_1 \check{\ast} g_2}(w)}\Psi_{o'}(z,w) \right)  K_{a_1, b_1}(z, \phi_{g_1 \check{\ast} g_2}(w)). 
\]
\end{lem}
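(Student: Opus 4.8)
The plan is to reorganize the sum defining $\Psi_o(z,w)$ according to the block $V_\pi$ containing $1_\ell$, exactly as in the proofs of Lemmas \ref{lem:T-case-1} and \ref{lem:T-case-2}, the decisive new feature being that in the odd case $V_\pi$ does \emph{not} reach the bottom of the right spine. Fix $\pi \in BNC_T(n,m)_o$. I would first pin down the shape of $V_\pi$: since $1_\ell \in V_\pi$ and the left nodes of any block are consecutive, the left nodes of $V_\pi$ form a prefix $1_\ell, \ldots, p_\ell$; since $V_\pi$ meets an odd node, the argument used before (a block that reached only $(2k-1)_r$ with $k>1$ would strand the $\sigma$-closed pair $\{1_r,2_r\}$ and prevent $\pi \vee \sigma_{n,m} = 1_{n,2m}$) forces $1_r \in V_\pi$, so $V_\pi$ contains odd nodes $1_r = (2k_1-1)_r, (2k_2-1)_r, \ldots, (2k_t-1)_r$; and because the bottom node $(2m)_r$ is even it cannot lie in $V_\pi$. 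Using bi-freeness and linearity—only the $a_1$-part of each repeated entry $a_1+a_2$ survives against the $b_1$'s in $V_\pi$—the outer block contributes the factor $\kappa_{p,t}(a_1,b_1)\,z^p$.

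Next I would analyze the regions cut out by the $t$ odd nodes of $V_\pi$. These split the right side into $t-1$ bounded regions (between consecutive $(2k_i-1)_r$ and $(2k_{i+1}-1)_r$) together with one unbounded region below $(2k_t-1)_r$ consisting of $(2k_t)_r, \ldots, (2m)_r$. The crucial bookkeeping device is to \emph{bundle} each odd node $(2k_i-1)_r$ of $V_\pi$ with the region lying immediately below it; this assigns the $w$-weight of that node to the region rather than to $\kappa_{p,t}(a_1,b_1)$, which is exactly what the final factorization requires. For each of the $t-1$ bounded bundles, adjoining $(2k_i-1)_r$ as a singleton produces a partition whose restriction to the $b_1$-nodes lies in $NC'$ and whose restriction to the $b_2$-nodes is its Kreweras complement, by the observation recorded in Section \ref{sec:preliminaries}; summing over the internal structure and the size yields exactly $\phi_{g_1 \check{\ast} g_2}(w)$. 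This is the mirror image of the computation in Lemma \ref{lem:T-case-1}, with the singleton-adjoined boundary node now a $b_1$-node, so that $g_1$ plays the anchoring role and the convolution order is $g_1 \check{\ast} g_2$.

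The heart of the argument, and the step I expect to be the main obstacle, is the bottom bundle: the node $(2k_t-1)_r$ together with the open region $(2k_t)_r, \ldots, (2m)_r$ and whatever leftover left nodes $(p+1)_\ell, \ldots, n_\ell$ it carries. I would first argue, using bi-non-crossingness and $\pi \vee \sigma_{n,m} = 1_{n,2m}$, that no block containing a leftover left node can reach a bounded region, so all leftover left nodes sit in this bottom region. I then split into two cases. If there are no leftover left nodes the bundle is purely right and, after adjoining $(2k_t-1)_r$ as a singleton, has the same $NC'$/Kreweras shape as a bounded bundle, so summing over its size again gives $\phi_{g_1 \check{\ast} g_2}(w)$. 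If there are leftover left nodes, then after relabeling $(2k_t+j)_r \mapsto (j+1)_r$ and $(p+i)_\ell \mapsto i_\ell$ the region becomes precisely an element of $BNC_T(n-p,\, m-k_t)'_o$: the image $1_r$ of $(2k_t)_r$ is the $\sigma'$-singleton, the labels read $b_2, b_1, \ldots, b_2$, and the bundled weight of $(2k_t-1)_r$ supplies the shift from $w^{\,m-k_t}$ to $w^{\,m-k_t+1}$ in the definition of $\Psi_{o'}$. Hence this case contributes exactly $\Psi_{o'}(z,w)$, and the bottom bundle as a whole sums to $\phi_{g_1 \check{\ast} g_2}(w) + \Psi_{o'}(z,w)$.

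Finally I would assemble the pieces. Summing over all admissible outer blocks $V_\pi$ (equivalently over all $p,t \geq 1$ and all region sizes), the contribution factors into one outer factor and $t$ bundle factors, namely $t-1$ bounded bundles and one bottom bundle, giving
\[
\Psi_o(z,w) = \sum_{p,t \geq 1} \kappa_{p,t}(a_1,b_1)\, z^p\, \left(\phi_{g_1 \check{\ast} g_2}(w)\right)^{t-1}\big(\phi_{g_1 \check{\ast} g_2}(w) + \Psi_{o'}(z,w)\big).
\]
Separating the two summands and recognizing $\sum_{p,t}\kappa_{p,t}(a_1,b_1) z^p \left(\phi_{g_1 \check{\ast} g_2}(w)\right)^{t} = K_{a_1,b_1}\!\left(z,\phi_{g_1 \check{\ast} g_2}(w)\right)$ then yields the asserted identity $\Psi_o = \left(1 + \tfrac{1}{\phi_{g_1 \check{\ast} g_2}(w)}\Psi_{o'}\right) K_{a_1,b_1}\!\left(z,\phi_{g_1 \check{\ast} g_2}(w)\right)$. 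The only genuinely delicate points are the $w$-bookkeeping in the bundling—ensuring the weight of each $(2k_i-1)_r$ is counted once, inside its region's $\phi$-factor and not inside $\kappa_{p,t}$—and the on-the-nose identification of the left-carrying bottom region with the data defining $\Psi_{o'}$.
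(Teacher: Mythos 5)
Your proposal is correct and follows essentially the same route as the paper: decompose $\Psi_o$ over the block $V_\pi$ containing $1_\ell$, identify its shape as a left prefix together with odd right nodes starting at $1_r$, factor each of the $s-1$ bounded right regions (with its bundled odd node) into $(g_1 \check{\ast} g_2)(0_{j_q},1_{j_q})w^{j_q}$, and recognize the bottom region as contributing $\phi_{g_1\check{\ast}g_2}(w)$ when no left nodes remain and $\Psi_{o'}(z,w)$ otherwise. The only cosmetic difference is that you sum the two bottom-region cases into a single factor $\phi_{g_1\check{\ast}g_2}(w)+\Psi_{o'}(z,w)$ before summing over $V_\pi$, whereas the paper splits the total sum into the $t=n$ and $t\neq n$ parts; the resulting computation is identical.
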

\begin{proof}
For each $n,m\geq 1$, we desire to rearrange the sum in $\Psi_o(z,w)$ by expanding $\kappa_\pi$ as a product of full $(\ell, r)$-cumulants and summing over all $\pi$ with the same block containing $1_\ell$. 

Fix $n,m \geq 1$, let $E = \{(2k)_r\}^{m}_{k=1}$, let $O = \{(2k-1)_r\}^{m}_{k=1}$, let $\pi \in BNC_T(n,m)_o$, let $V_\pi$ denote the block of $\pi$ containing $1_\ell$, let $t$ (respectively $s$) denote the number of elements of $\{1_\ell, \ldots, n_\ell\}$ (respectively $O$) contained in $V_\pi$ (so $t, s \geq 1$).  Since $\pi \vee \sigma_{n,m} = 1_{n,2m}$, $V_\pi$ must be of the form $\{k_\ell\}^t_{k=1} \cup \{(2k_q-1)_r\}^s_{q=1}$ for some $1 = k_1 < k_2 < \cdots < k_s \leq m$.  Below is an example of such a $\pi$.
\begin{align*}
	\begin{tikzpicture}[baseline]
	\draw[thick, dashed] (-1,5.75) -- (-1,-.25) -- (1,-.25) -- (1,5.75);
	\draw[thick, blue, densely dotted] (1, 5.5) -- (0.75, 5.25) -- (1, 5);
	\draw[thick, blue, densely dotted] (1, 4.5) -- (0.75, 4.25) -- (1, 4);
	\draw[thick, blue, densely dotted] (1, 3.5) -- (0.75, 3.25) -- (1, 3);
	\draw[thick, blue, densely dotted] (1, 2.5) -- (0.75, 2.25) -- (1, 2);
	\draw[thick, blue, densely dotted] (1, 1.5) -- (0.75, 1.25) -- (1, 1);
	\draw[thick, blue, densely dotted] (1, 0.5) -- (0.75, 0.25) -- (1, 0);
	\draw[ggreen, thick] (-1,5.5) -- (0, 5.5) -- (0, 2.5) -- (1, 2.5);
	\draw[thick] (1,5) -- (.5, 5) -- (.5, 4) -- (1, 4);
	\draw[thick, red] (1,.5) -- (0, .5) -- (0, 1.5) -- (1, 1.5);
	\draw[thick, red] (-1,4) -- (-.5, 4) -- (-.5, 0) -- (1, 0);
	\draw[ggreen, thick] (1, 5.5) -- (0, 5.5);
	\draw[ggreen, thick] (1, 3.5) -- (0, 3.5);
	\draw[ggreen, thick] (-1, 4.5) -- (0, 4.5);
	\draw[ggreen, thick] (-1, 5) -- (0, 5);
	\draw[thick, red] (1, 2) -- (-.5, 2);
	\draw[thick, red] (-1, 3.5) -- (-.5, 3.5);
	\node[left] at (-1, 5.5) {$1_\ell$};
	\draw[ggreen, fill=ggreen] (-1,5.5) circle (0.05);
	\node[left] at (-1, 5) {$2_\ell$};
	\draw[ggreen, fill=ggreen](-1,5) circle (0.05);
	\node[left] at (-1, 4.5) {$3_\ell$};
	\draw[ggreen, fill=ggreen] (-1,4.5) circle (0.05);
	\node[left] at (-1, 4) {$4_\ell$};
	\draw[red, fill=red] (-1,4) circle (0.05);
	\node[left] at (-1, 3.5) {$5_\ell$};
	\draw[red, fill=red] (-1,3.5) circle (0.05);
	\draw[ggreen, fill=ggreen] (1,5.5) circle (0.05);
	\node[right] at (1,5.5) {$1_r$};
	\draw[fill=black] (1,5) circle (0.05);
	\node[right] at (1,5) {$2_r$};
	\draw[fill=black] (1,4.5) circle (0.05);
	\node[right] at (1,4.5) {$3_r$};
	\draw[red, fill=red] (1,0) circle (0.05);
	\node[right] at (1,4) {$4_r$};
	\draw[fill=black] (1,4) circle (0.05);
	\node[right] at (1,3.5) {$5_r$};
	\draw[ggreen, fill=ggreen] (1,3.5) circle (0.05);
	\node[right] at (1,3) {$6_r$};
	\draw[fill=black] (1,3) circle (0.05);
	\node[right] at (1,2.5) {$7_r$};
	\draw[ggreen, fill=ggreen] (1,2.5) circle (0.05);
	\node[right] at (1,2) {$8_r$};
	\draw[red, fill=red] (1,2) circle (0.05);
	\node[right] at (1,1.5) {$9_r$};
	\draw[red, fill=red] (1,1.5) circle (0.05);
	\node[right] at (1,1) {$10_r$};
	\draw[red, fill=red] (1,1) circle (0.05);
	\node[right] at (1,0.5) {$11_r$};
	\draw[red, fill=red] (1,0.5) circle (0.05);
	\node[right] at (1,0) {$12_r$};
	\end{tikzpicture}
\end{align*}

Note $V_\pi$ divides the right nodes into $s$ disjoint regions where the bottom region is special as those nodes may connect to left nodes.  For each $1 \leq q \leq s$, let $j_q = k_{q+1} - k_{q}$, where $k_s = m+1$.  Note that $\sum^s_{q=1} j_q = m$.   For $q \neq s$, let $\pi_q$ denote the non-crossing partition obtained by restricting $\pi$ to $\{(2k_{q})_r, (2k_{q}+1)_r, \ldots, (2k_{q+1}-2)_r\}$.   As discussed in Lemma \ref{lem:T-case-1}, if $\pi'_q$ is obtained from $\pi_q$ by adding the singleton block $\{(2k_q-1)_r\}$, then $\pi'_q|_{O}$ is naturally an element of $NC'(j_q)$ and $\pi'_q|_E$ is naturally an element of $NC(j_q)$, which must be $K(\pi'_q|_O)$ in order for $\pi \vee \sigma_{n,m} = 1_{n,2m}$. 

Let $\pi'_s$ denote the bi-non-crossing partition obtained by restricting $\pi$ to $\{k_\ell\}_{k=t+1}^n \cup \{(2k_s)_r, (2k_s + 1)_r, \ldots, (2m)_r\}$ (which is shaded differently in the above diagram).  Notice, in order for $\pi \vee \sigma_{n,m} = 1_{2n,2m}$, it must be the case that $\pi_s \in BNC_T(n-t,j_s-1)'_o$.

By writing $\kappa_\pi$ as a product of cumulants, using linearity of $\kappa_\pi$, and using the fact that $(a_1, b_1)$ and $(a_2, b_2)$ are bi-free (and implicitly using $\varphi(b_1) = 1$), we obtain
\begin{align*}
\kappa_\pi&(\underbrace{a_1+a_2, \ldots, a_1+a_2}_n, \underbrace{b_1, b_2, b_1, b_2, \ldots, b_1, b_2}_{b_1 \text{ occurs }m \text{ times}}) z^n w^m \\
& = \kappa_{t, s}(a_1, b_1)z^t \left(\prod^{s-1}_{q=1} g_1(0_{j_q}, \pi'_q) g_2(0_{j_q}, K(\pi'_q)) w^{j_q}\right) \kappa_{\pi_s}(\underbrace{a_1+a_2, \ldots, a_1+a_2}_{n-t}, \underbrace{b_2, b_1, b_2, \ldots, b_1, b_2}_{b_2 \text{ occurs }j_s \text{ times}})z^{n-t} w^{j_s}.
\end{align*}
Consequently, summing over all $\rho \in BNC_T(n,m)_o$ with $V_\rho = V_\pi$, we obtain 
\begin{align*}
\sum_{\substack{\rho \in BNC_T(n,m)_o \\ V_\rho = V_\pi  }} &\kappa_\rho(\underbrace{a_1+a_2, \ldots, a_1+a_2}_n, \underbrace{b_1, b_2, b_1, b_2, \ldots, b_1, b_2}_{b_1 \text{ occurs }m \text{ times}}) z^n w^m \\
= \kappa_{t, s}&(a_1, b_1)z^t \left(\prod^{s-1}_{q=1} (g_1 \check{\ast} g_2)(0_{j_q}, 1_{j_q}) w^{j_q} \right)\\
& \cdot \left( \sum_{\sigma \in BNC_T(n-t,j_s-1)'_o}  \kappa_{\sigma}(\underbrace{a_1+a_2, \ldots, a_1+a_2}_{n-t}, \underbrace{b_2, b_1, b_2, \ldots, b_1, b_2}_{b_2 \text{ occurs }j_s \text{ times}})z^{n-t} w^{j_s} \right)
\end{align*}
as all $\sigma \in BNC_T(n-t,j_s-1)'_o$ occur.

We desire to sum over all $n,m\geq 1$ and all possible $V_\pi$.  This produces all possible $t,s \geq 1$ and all $j_q \geq 1$.  If we first sum those above terms with $n = t$, we see, using similar arguments to those used above, that
\begin{align*}
\sum_{\sigma \in BNC_T(0,j_s-1)'_o}  \kappa_{\sigma}(\underbrace{b_2, b_1, b_2, \ldots, b_1, b_2}_{b_2 \text{ occurs }j_q \text{ times}}) w^{j_s} = (g_1 \check{\ast} g_2)(0_{j_s}, 1_{j_s}) w^{j_s}.
\end{align*}
Consequently, summing those terms with $t = n$ gives
\begin{align*}
\sum_{t,s \geq 1} \kappa_{t, s}(a_1, b_1)z^t \prod^s_{q=1} \phi_{g_1 \check{\ast} g_2}(w) = \sum_{t,s \geq 1} \kappa_{t, s}(a_1, b_1)z^t (\phi_{g_1 \check{\ast} g_2}(w) )^s = K_{a_1, b_1}\left(z, \phi_{g_1 \check{\ast} g_2}(w) \right).
\end{align*}
Moreover, summing those terms with $t \neq n$ gives
\begin{align*}
\sum_{t,s \geq 1} \kappa_{t, s}(a_1, b_1)z^t \left(\prod^{s-1}_{q=1} \phi_{g_1 \check{\ast} g_2}(w)\right) \Psi_{o'}(z,w) &= \sum_{t,s \geq 1} \kappa_{t, s}(a_1, b_1)z^t \left(\phi_{g_1 \check{\ast} g_2}(w)\right)^{s-1} \Psi_{o'}(z,w) \\
&= \frac{1}{\phi_{g_1 \check{\ast} g_2}(w)}\Psi_{o'}(z,w) K_{a_1, b_1}(z, \phi_{g_1 \check{\ast} g_2}(w)).
\end{align*}
Combining the above two sums completes the proof.
\end{proof}

\begin{proof}[Proof of Theorem \ref{thm:T-property}.]
By Lemma \ref{lem:T-case-1} along with equation (\ref{eq:inversion-with-convolution}), we see that
\begin{align*}
\Psi_e\left(z, \phi^{\inv}_{g}(w)\right) &= K_{a_2, b_2}\left(z, \phi_{g_2 \check{\ast} g_1}\left(\phi^{\inv}_{g}(w)\right) \right)  = K_{a_2, b_2}\left(z,\phi^{\inv}_{g_2}(w) \right).
\end{align*}
By Lemma \ref{lem:T-case-2} along with equations (\ref{eq:inversion-with-convolution}, \ref{eq:convolution-with-inverse-series}), we see that
\begin{align*}
\Psi_{o'}\left(z,\phi^{\inv}_{g}(w)\right) &= \frac{\phi^{\inv}_{g}(w) }{\phi_{g_2 \check{\ast} g_1}\left(\phi^{\inv}_{g}(w)\right)} K_{a_2, b_2}\left(z, \phi_{g_2 \check{\ast} g_1}\left(\phi^{\inv}_{g}(w)\right) \right)\\
&= \frac{\frac{1}{w}\phi^{\inv}_{g_1}(w)\phi^{\inv}_{g_2}(w)}{\phi^{\inv}_{g_2}(w)} K_{a_2, b_2}\left(z, \phi^{\inv}_{g_2}(w) \right)\\
&=\frac{1}{w} \phi^{\inv}_{g_1}(w)  K_{a_2, b_2}\left(z, \phi^{\inv}_{g_2}(w) \right).
\end{align*}
Furthermore, by Lemma \ref{lem:T-case-3} along with equation (\ref{eq:inversion-with-convolution}), we obtain
\begin{align*}
\Psi_o\left(z,\phi^{\inv}_{g}(w)\right) &= \left(1 + \frac{1}{\phi_{g_1 \check{\ast} g_2}\left(\phi^{\inv}_{g}(w)\right)}\Psi_{o'}\left(z,\phi^{\inv}_{g}(w)\right) \right)  K_{a_1, b_1}\left(z, \phi_{g_1 \check{\ast} g_2}\left(\phi^{\inv}_{g}(w)\right)\right)\\
&= \left(1 + \frac{1}{\phi^{\inv}_{g_1}(w)}\Psi_{o'}\left(z,\phi^{\inv}_{g}(w)\right)\right) K_{a_1, b_1}\left(z, \phi^{\inv}_{g_1}(w)\right) \\
&= \left(1 + \frac{1}{w}  K_{a_2, b_2}\left(z, \phi^{\inv}_{g_2}(w) \right)\right) K_{a_1, b_1}\left(z, \phi^{\inv}_{g_1}(w)\right) \\
&= K_{a_1, b_1}\left(z, \phi^{\inv}_{g_1}(w)\right) + \frac{1}{w} K_{a_1, b_1}\left(z, \phi^{\inv}_{g_1}(w)\right) K_{a_2, b_2}\left(z, \phi^{\inv}_{g_2}(w) \right).
\end{align*}
As
\begin{align*}
K_{a_1+a_2, b_1b_2}\left(z,\phi^{\inv}_{g}(w)\right) &= \Psi_e\left(z, \phi^{\inv}_{g}(w)\right)  + \Psi_o\left(z,\phi^{\inv}_{g}(w)\right),
\end{align*}
we have verified equation (\ref{eq:T-eq-to-verify}) holds and thus the proof is complete.
\end{proof}

\section{Bi-Free Partial $S$-Transform}
\label{sec:S}

In this section, we will study Voiculescu's bi-free partial $S$-transform through combinatorics.    All notation in this section refers to the notation established in this section and not to the notation of Section \ref{sec:T}.

\begin{defn}[\cite{V2015}*{Definition 2.1}]
Let $(a, b)$ be a two-faced pair in a non-commutative probability space $(\A, \varphi)$ with $\varphi(a) \neq 0$ and $\varphi(b) \neq 0$.  The $2$-variable partial bi-free $S$-transform of $(a, b)$ is the holomorphic function on $(\bC \setminus \{0\})^2$ near $(0,0)$ defined by
\begin{align}
S_{a,b}(z, w) = \frac{z+1}{z}\frac{w+1}{w} \left( 1-\frac{z}{H_{a,b}(\X_a(z), \X_b(w))}\right). \label{eq:S-V}
\end{align}
\end{defn}

It will be useful to note the following equivalent definition of the bi-free partial $S$-transform.  To simplify discussions, we will demonstrate the equality in the case $\varphi(a) = \varphi(b) = 1$.  This does not hinder the proof of the desired result; that is, Theorem \ref{thm:S-property} (see Remark \ref{rem:S-b=1}).

\begin{prop}
\label{prop:S-new-defn}
If $(a,b)$ is a two-faced pair in a non-commutative probability space $(\A, \varphi)$ with $\varphi(a) = \varphi(b) =1$, then, as a formal power series,
\begin{align}
S_{a,b}(z,w) = 1 + \frac{1+z+w}{zw} K_{a,b}\left( c^{\inv}_a(z),  c^{\inv}_b(w)\right). \label{eq:S-my}
\end{align}
\end{prop}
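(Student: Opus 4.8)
The plan is to mirror the proof of Proposition~\ref{prop:T-new-defn}, but substituting $z \mapsto \X_a(z)$ and $w \mapsto \X_b(w)$ into the bi-free moment relation (\ref{eq:bi-moment}), in place of the mixed substitution $z \mapsto 1/K_a(z)$, $w \mapsto \X_b(w)$ used in the $T$-case. First I would record the two evaluations $h_a(\X_a(z)) = 1+z$ and $h_b(\X_b(w)) = 1+w$ coming from (\ref{eq:X-in-moment}), together with the identity $\X_a(z)(1+z) = c^{\inv}_a(z)$ and its analogue $\X_b(w)(1+w) = c^{\inv}_b(w)$. Each of these follows by equating the two expressions (\ref{eq:S}) and (\ref{eq:S-to-cumulants}) for $S_a$ (respectively $S_b$), where (\ref{eq:S-to-cumulants}) is available precisely because $\varphi(a) = \varphi(b) = 1$. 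The role of this identity is that the argument $\bigl(\X_a(z) h_a(\X_a(z)),\, \X_b(w) h_b(\X_b(w))\bigr)$ appearing in the $C_{a,b}$ term of (\ref{eq:bi-moment}) collapses exactly to $\bigl(c^{\inv}_a(z),\, c^{\inv}_b(w)\bigr)$. This is what makes the inverse-cumulant series surface, and it is the structural difference from the $T$-case, where only the right argument became $c^{\inv}_b(w)$ while the left argument reduced to the bare variable $z$ through the $K_a$ route.

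With these substitutions in hand, (\ref{eq:bi-moment}) reads
\[
(1+z) + (1+w) = \frac{(1+z)(1+w)}{H_{a,b}(\X_a(z), \X_b(w))} + C_{a,b}\left(c^{\inv}_a(z),\, c^{\inv}_b(w)\right).
\]
I would then expand the cumulant series via (\ref{eq:bi-K}), writing $C_{a,b}(z,w) = 1 + c_a(z) + c_b(w) + K_{a,b}(z,w)$, and use the cancellations $c_a(c^{\inv}_a(z)) = z$ and $c_b(c^{\inv}_b(w)) = w$. These reduce the display to the core identity
\[
\frac{(1+z)(1+w)}{H_{a,b}(\X_a(z), \X_b(w))} = 1 - K_{a,b}\left(c^{\inv}_a(z),\, c^{\inv}_b(w)\right),
\]
which is the $S$-analogue of the expression for $1/H$ obtained at the start of the proof of Proposition~\ref{prop:T-new-defn}.

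Finally, I would solve this core identity for $1/H_{a,b}(\X_a(z), \X_b(w))$, substitute into the defining formula (\ref{eq:S-V}), and simplify, clearing the factor $(1+z)(1+w)$ against the prefactor $\tfrac{(z+1)(w+1)}{zw}$, exactly as the chain of equalities in the $T$-proof clears $(1+w)$ against $\tfrac{w+1}{w}$. The outcome should be $1 + \tfrac{1+z+w}{zw} K_{a,b}(c^{\inv}_a(z), c^{\inv}_b(w))$, which is (\ref{eq:S-my}).

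I expect the main obstacle to be this last algebraic step rather than any of the preceding structural identities. The prefactor $\tfrac{(z+1)(w+1)}{zw}$ introduces superficially singular $\tfrac{1}{zw}$-type contributions, and one must verify that they cancel against the matching terms produced by the factor $1 - K_{a,b}(c^{\inv}_a(z), c^{\inv}_b(w))$, so that the final expression is a genuine formal power series with no polar part. This bookkeeping is what forces the precise shape of the numerator; it works because $K_{a,b}(c^{\inv}_a(z), c^{\inv}_b(w))$ vanishes to order $zw$ (as $\kappa_{n,m}$ requires $n,m \geq 1$ and each of $c^{\inv}_a(z), c^{\inv}_b(w)$ vanishes to first order) and because $\tfrac{(z+1)(w+1)}{zw} = 1 + \tfrac{1+z+w}{zw}$ aligns the constant term correctly. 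As in the $T$-case, no analytic convergence issues intervene since every manipulation is carried out at the level of formal power series.
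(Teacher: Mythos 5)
Your proposal is correct and follows essentially the same route as the paper's own proof: substitute $(\X_a(z),\X_b(w))$ into (\ref{eq:bi-moment}), use $h_a(\X_a(z))=1+z$ and $(1+z)\X_a(z)=c_a^{\inv}(z)$ (with the $b$-analogues), expand $C_{a,b}$ via (\ref{eq:bi-K}), and clear the resulting expression against the prefactor. The one caveat is that the final algebra closes only if the numerator $z$ in (\ref{eq:S-V}) is read as $1+z+w$ (as in Voiculescu's actual definition); the paper's computation makes the same silent correction, so this is a typo in the displayed definition rather than a flaw in your argument.
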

\begin{proof}
Using equations (\ref{eq:X-in-moment}, \ref{eq:S}, \ref{eq:S-to-cumulants}, \ref{eq:bi-moment}), we obtain that
\begin{align*}
\frac{1}{H_{a,b}\left(\X_a(z), \X_b(w)\right)} = \frac{1}{1+z} + \frac{1}{1+w} -\frac{1}{1+z}\frac{1}{1+w}C_{a,b}\left(c^{\inv}_a(z), c^{\inv}_b(w)\right)   
\end{align*}
Therefore, using equations (\ref{eq:bi-K}, \ref{eq:S-V}), we obtain that
\begin{align*}
S_{a,b}(z,w) &= \frac{z+1}{z}\frac{w+1}{w} \left(1 - (1+z+w) \left( \frac{1}{1+z} + \frac{1}{1+w} -\frac{1}{1+z}\frac{1}{1+w}C_{a,b}\left(c^{\inv}_a(z), c^{\inv}_b(w)\right)     \right)\right) \\
&= \frac{1}{zw}\left((1+z)(1+w) - (1+z+w)(2+z+w) + (1+z+w)C_{a,b}\left(c^{\inv}_a(z), c^{\inv}_b(w)\right)         \right)  \\
&= \frac{1}{zw}\left( zw - (1+z+w)^2 + (1+z+w)\left(1 + z + w +  K_{a,b}\left( c^{\inv}_a(z),  c^{\inv}_b(w)\right)    \right)    \right) \\
&=  1 + \frac{1+z+w}{zw} K_{a,b}\left( c^{\inv}_a(z),  c^{\inv}_b(w)\right). \qedhere
\end{align*}
\end{proof}

\begin{rem}
\label{rem:S-b=1}
Again, one might be concerned that we have restricted to the case $\varphi(a) =\varphi(b) = 1$.  Using the same ideas as in Remark \ref{rem:T-b=1}, if we use equation (\ref{eq:S-my}) as the definition of the $S$-transform and if $\lambda, \mu \in \bC \setminus \{0\}$, then $S_{a,b}(z,w) = S_{\lambda a,\mu b}(z,w)$.  Hence there is no loss in assuming $\varphi(a) = \varphi(b) = 1$.
\end{rem}
\begin{rem}
Note Proposition \ref{prop:S-new-defn} immediately provides the $S$-transform part of \cite{V2015}*{Proposition 4.2}.  Indeed if $a$ and $b$ are elements of a non-commutative probability space $(\A, \varphi)$ with $\varphi(a) \neq 0$, $\varphi(b) \neq 0$, and $\varphi(a^nb^m) = \varphi(a^n)\varphi(b^m)$ for all $n,m \geq 0$, then $\kappa_{n,m}(a,b) = 0$ for all $n,m\geq 1$ (see \cite{S2014}*{Section 3.2}).  Hence $K_{a,b}(z,w) = 0$ so $S_{a,b}(z,w) = 1$.
\end{rem}

We desire to prove the following, which was one of two main results of \cite{V2015}, using combinatorics via Proposition \ref{prop:S-new-defn}.  
\begin{thm}[\cite{V2015}*{Theorem 2.1}]
\label{thm:S-property}
Let $(a_1, b_1)$ and $(a_2, b_2)$ be bi-free two-faced pairs in a non-commutative probability space $(\A, \varphi)$ with $\varphi(a_j) \neq 0$ and $\varphi(b_j) \neq 0$.  Then
\[
S_{a_1a_2, b_1b_2}(z,w) = S_{a_1, b_1}(z,w) S_{a_2, b_2}(z,w)
\]
on $(\bC \setminus \{0\})^2$ near $(0,0)$.
\end{thm}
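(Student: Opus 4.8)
The plan is to follow the architecture of Section~\ref{sec:T}, replacing the analytic definition of $S_{a,b}$ by the cumulant description of Proposition~\ref{prop:S-new-defn} and reducing the claimed multiplicativity to a single combinatorial identity. Assuming $\varphi(a_j)=\varphi(b_j)=1$ (harmless by Remark~\ref{rem:S-b=1}, and noting $\varphi(a_1a_2)=\varphi(b_1b_2)=1$ by freeness of the respective left and right algebras), let $f_j,g_j$ be the multiplicative functions attached to the cumulants of $a_j,b_j$ and set $f=f_1\ast f_2$, $g=g_1\ast g_2$, so that $\phi^{\inv}_f(z)=c^{\inv}_{a_1a_2}(z)$, $\phi^{\inv}_g(w)=c^{\inv}_{b_1b_2}(w)$, $\phi^{\inv}_{f_j}(z)=c^{\inv}_{a_j}(z)$, and $\phi^{\inv}_{g_j}(w)=c^{\inv}_{b_j}(w)$. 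Writing $A_j=K_{a_j,b_j}\!\left(\phi^{\inv}_{f_j}(z),\phi^{\inv}_{g_j}(w)\right)$, Proposition~\ref{prop:S-new-defn} gives $S_{a_j,b_j}=1+\frac{1+z+w}{zw}A_j$, so expanding $S_{a_1,b_1}S_{a_2,b_2}$ and comparing with $S_{a_1a_2,b_1b_2}=1+\frac{1+z+w}{zw}K_{a_1a_2,b_1b_2}\!\left(\phi^{\inv}_f(z),\phi^{\inv}_g(w)\right)$ shows that Theorem~\ref{thm:S-property} is equivalent to
\[
K_{a_1a_2,b_1b_2}\!\left(\phi^{\inv}_f(z),\phi^{\inv}_g(w)\right)=A_1+A_2+\frac{1+z+w}{zw}\,A_1A_2.
\]

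I would then expand the left-hand side combinatorially. Since $a_1a_2$ and $b_1b_2$ are \emph{both} products, I apply the product-of-cumulants formula~(\ref{eq:product-of-cumulants}) to the $n$ left slots and to the $m$ right slots simultaneously, doubling both node sets: each left slot becomes an $a_1$-node followed by an $a_2$-node and each right slot becomes a $b_1$-node followed by a $b_2$-node, producing $\kappa_{n,m}(a_1a_2,b_1b_2)=\sum_\pi \kappa_\pi$ over $\pi\in BNC(2n,2m)$ with $\pi\vee\sigma_{n,m}=1_{2n,2m}$, where $\sigma_{n,m}$ pairs the two nodes arising from each slot. Bi-freeness of $(a_1,b_1)$ and $(a_2,b_2)$ then annihilates every $\pi$ possessing a block that mixes an index-$1$ entry with an index-$2$ entry, so each surviving block is ``type $1$'' (only $a_1,b_1$) or ``type $2$'' (only $a_2,b_2$). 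This is the two-sided version of the node-doubling carried out for $b_1b_2$ alone in Section~\ref{sec:T}.

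Next I would run the analogue of Lemmas~\ref{lem:T-case-1}--\ref{lem:T-case-3}, organizing the sum by the block $V_\pi$ containing $1_\ell$ (necessarily a type-$1$ block, as $1_\ell$ is an $a_1$-node) and by the regions into which $V_\pi$ cuts the remaining left and right nodes; the recursion below will restore the symmetry in $A_1,A_2$ that this choice temporarily breaks. Each such region restricts to a smaller bi-non-crossing partition whose left and right halves are Kreweras complements of one another, exactly the configuration governing the pinched convolution on $\M_1$; summing a region out replaces it by the pinched products $f_1\check{\ast}f_2$, $f_2\check{\ast}f_1$, $g_1\check{\ast}g_2$, $g_2\check{\ast}g_1$, while the leftover ``corner'' region reproduces a lower-order instance of the same generating functions. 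This yields a recursive system among auxiliary pinched sums indexed by data on \emph{both} sides, in contrast to the purely right-hand auxiliaries $\Psi_{o'},\Psi_o$ of Section~\ref{sec:T}. Substituting $z\mapsto\phi^{\inv}_f(z)$ and $w\mapsto\phi^{\inv}_g(w)$ and collapsing via the inversion identities~(\ref{eq:inversion-with-convolution}) and~(\ref{eq:convolution-with-inverse-series}) should turn $\phi_{f_1\check{\ast}f_2}(\phi^{\inv}_f(z))$ into $\phi^{\inv}_{f_1}(z)$ (and the right-hand analogues likewise), and supply the factors $z\,\phi^{\inv}_{f_1\check{\ast}f_2}(z)=\phi^{\inv}_{f_1}(z)\phi^{\inv}_{f_2}(z)$ that convert the region contributions into the clean terms $A_1$ and $A_2$.

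The main obstacle is the two-sided bookkeeping and, above all, producing the precise cross-term coefficient $\frac{1+z+w}{zw}$. In Section~\ref{sec:T}, only the right variable was a product and the cross-term acquired the single factor $\frac1w$; here both products expand, so the junction where the type-$1$ region of $V_\pi$ meets the type-$2$ remainder can overlap on the left, on the right, or on both, and I expect these three configurations to contribute exactly $\frac1z$, $\frac1w$, and $\frac{1}{zw}$, whose sum is $\frac{1+z+w}{zw}$. Establishing that these three corner geometries are the only possibilities, and that each evaluates to the stated factor after the inverse substitutions, is the delicate point; it is also where the orientation subtlety emphasized in the introduction must be respected, since $S$ multiplies for $(a_1a_2,b_1b_2)$ rather than for the Kreweras-natural product $(a_1a_2,b_2b_1)$, so the left and right pinched convolutions must be composed in the correct order throughout.
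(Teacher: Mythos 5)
Your overall strategy coincides with the paper's: reduce via Proposition \ref{prop:S-new-defn} to the identity $K_{a_1a_2,b_1b_2}\bigl(\phi^{\inv}_f(z),\phi^{\inv}_g(w)\bigr)=\Theta_1+\Theta_2+\frac{1+z+w}{zw}\Theta_1\Theta_2$, double both the left and the right node sets via equation (\ref{eq:product-of-cumulants}), discard partitions having a block that mixes index-$1$ and index-$2$ entries by bi-freeness, evaluate the resulting regions by pinched convolutions, and collapse with equations (\ref{eq:inversion-with-convolution}) and (\ref{eq:convolution-with-inverse-series}). Your guess for the origin of the factor $\frac{1+z+w}{zw}$ --- junctions overlapping on the left, on the right, or on both, contributing $\frac1z$, $\frac1w$, $\frac1{zw}$ --- is exactly what happens in the paper (these are $\Psi_{o,\ell}$, $\Psi_{o,r}$, $\Psi_{o,\ell r}$ of Lemmata \ref{lem:S-case-3}, \ref{lem:S-case-4}, \ref{lem:S-case-5}).

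There is, however, a concrete gap in the decomposition step. You propose to organize the sum by the block $V_\pi$ containing $1_\ell$ and by ``the regions into which $V_\pi$ cuts the remaining left and right nodes,'' importing the organizing principle of Section \ref{sec:T}. That principle worked there because the left nodes were singletons of $\sigma_{n,m}$, which forced every block containing a left node to also contain a right node. Here $\sigma_{n,m}$ pairs $(2k-1)_\ell$ with $(2k)_\ell$, so connectivity no longer forces this: for every $\pi$ in the paper's class $BNC_S(n,m)_e$ (where the unique block meeting both sides consists of even-indexed nodes), the block containing $1_\ell$ is a left-only block nested in the top left region, and it does not separate the right nodes into regions at all; the expansion you describe does not apply to it. The paper instead distinguishes the first block, measured from the top, containing both left and right nodes, and splits by the parity of its minimal index into $\Psi_e$ and $\Psi_o$; only in the $\Psi_o$ half does that block contain $1_\ell$. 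Moreover the corner recursion requires four configurations, not three: besides your $\ell$, $r$, and $\ell r$ junctions there is $\Psi_{o,0}$, in which the corner contains no type-$2$ block meeting both sides, and it is this fourth case that supplies the ``$1$'' in Lemma \ref{lem:S-case-6} and hence the standalone $\Theta_1$ term. You would need to repair the choice of distinguished block and add this fourth configuration before the computation closes; with those corrections your plan becomes the paper's proof.
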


To simplify the proof of the result, we will assume that $\varphi(a_j) = \varphi(b_j) = 1$.  Note $\varphi(a_1a_2) = \varphi(b_1b_2) = 1$ by freeness of the left algebras and of the right algebras in bi-free pairs.  Furthermore, we will let $f_j$ (respectively $g_j$) denote the multiplicative function associated to the cumulants of $a_j$ (respectively $b_j$) defined by $f_j(0_n, 1_n) = \kappa_n(a_j)$ (respectively $g_j(0_n, 1_n) = \kappa_n(b_j)$).  Recall if $f$ (respectively $g$) is the multiplicative function associated to the cumulants of $a_1a_2$ (respectively $b_1b_2$), then $f = f_1 \ast f_2$ (respectively $g = g_1 \ast g_2$).  Thus $\phi^{\inv}_{f}(z) = c^{\inv}_{a_1a_2}(z)$, $\phi^{\inv}_g(w) = c^{\inv}_{b_1b_2}(w)$, $\phi^{\inv}_{f_j}(z) = c^{\inv}_{a_j}(z)$, and $\phi^{\inv}_{g_j}(w) = c^{\inv}_{b_j}(w)$.

By Proposition \ref{prop:S-new-defn}, it suffices to show that
\begin{align}
K_{a_1a_2, b_1b_2}\left(\phi^{\inv}_{f}(w), \phi^{\inv}_{g}(w)\right) =  \Theta_1(z,w) + \Theta_2(z,w) + \frac{1+z+w}{zw} \Theta_1(z,w)\Theta_2(z,w) \label{eq:S-eq-to-verify}
\end{align}
where
\[
\Theta_j(z,w) = K_{a_j, b_j}\left(\phi^{\inv}_{f_j}(w), \phi^{\inv}_{g_j}(w)\right).
\]

Recall
\[
K_{a_1a_2, b_1b_2}(z,w) = \sum_{n,m\geq 1} \kappa_{n,m}(a_1a_2, b_1b_2) z^n w^m.
\]
For fix $n,m \geq 1$, let $\sigma_{n,m}$ denote the element of $BNC(2n,2m)$ with blocks $\{\{(2k-1)_\ell, (2k)_\ell\}\}_{k=1}^n \cup \{\{(2k-1)_r, (2k)_r\}\}_{k=1}^m$.  Thus equation (\ref{eq:product-of-cumulants}) implies that
\[
\kappa_{n,m}(a_1a_2, b_1b_2) = \sum_{\substack{ \pi \in BNC(2n, 2m) \\ \pi \vee \sigma_{n,m} = 1_{2n,2m} }} \kappa_\pi(\underbrace{a_1, a_2, a_1, a_2, \ldots, a_1, a_2}_{a_1 \text{ occurs }n \text{ times}}, \underbrace{b_1, b_2, b_1, b_2, \ldots, b_1, b_2}_{b_1 \text{ occurs }m \text{ times}}).
\]
Since $(a_1, b_1)$ and $(a_2, b_2)$ are bi-free, we note that 
\[
\kappa_\pi(\underbrace{a_1, a_2, a_1, a_2, \ldots, a_1, a_2}_{a_1 \text{ occurs }n \text{ times}}, \underbrace{b_1, b_2, b_1, b_2, \ldots, b_1, b_2}_{b_1 \text{ occurs }m \text{ times}} )= 0
\]
if $\pi$ contains a block containing a $(2k)_{\theta_1}$ and a $(2j-1)_{\theta_2}$ for some $\theta_1, \theta_2 \in \{\ell, r\}$ and for some $k, j$.

For $n,m\geq 1$, let $BNC_S(n,m)$ denote all $\pi \in BNC(2n, 2m)$ such that $\pi \vee \sigma_{n,m} = 1_{2n,2m}$ and $\pi$ contains no blocks with both a $(2k)_{\theta_1}$ and a $(2j-1)_{\theta_2}$ for some $\theta_1, \theta_2 \in \{\ell, r\}$ and for some $k, j$.  Consequently, we obtain
\[
K_{a_1a_2, b_1b_2}(z,w) = \sum_{n,m\geq 1} \left(\sum_{\pi \in BNC_S(n,m)} \kappa_\pi(\underbrace{a_1, a_2, a_1, a_2, \ldots, a_1, a_2}_{a_1 \text{ occurs }n \text{ times}}, \underbrace{b_1, b_2, b_1, b_2, \ldots, b_1, b_2}_{b_1 \text{ occurs }m \text{ times}}) \right) z^n w^m.
\]

We desire to divide up this sum into two parts based on types of partitions in $BNC_S(n,m)$.  Notice that if $\pi \in BNC_S(n,m)$, then $\pi$ must contain a block with both a $k_\ell$ and a $j_r$ for some $k,j$ so that $\pi \vee \sigma_{n,m} = 1_{2n, 2m}$.   If $V \subseteq \{1_\ell, \ldots, (2n)_\ell, 1_r, \ldots, (2m)_r\}$, we define $\min(V)$ to be the integer $k$ such that either $k_\ell \in V$ or $k_r \in V$ yet $j_\ell, j_r \notin V$ for all $j < k$.  

Let $BNC_S(n,m)_e$ denote all $\pi \in BNC_S(n,m)$ such that the block $V$ of $\pi$ that has the smallest $\min$-value over all blocks $W$ of $\pi$ such that there exists $k_\ell, j_r \in W$ for some $k,j$ has $\min(V) \in 2\bZ$; that is, $V$ is the first block, measured from the top, in the bi-non-crossing diagram of $\pi$ that has both left and right nodes and these nodes are of even index.  Similarly let $BNC_S(n,m)_o$ denote all $\pi \in BNC_T(n,m)$ such that the block $V$ of $\pi$ that has the smallest $\min$-value over all blocks $W$ of $\pi$ such that there exists $k_\ell, j_r \in W$ for some $k,j$ has $\min(V) \in 2\bZ+1$.  Note $BNC_S(n,m)_e$ and $BNC_S(n,m)_o$ are disjoint and $BNC_S(n,m)_e \cup BNC_S(n,m)_o = BNC_S(n,m)$.  Therefore, if for $d \in \{o,e\}$ we define
\[
\Psi_d(z,w) := \sum_{n,m\geq 1} \left(\sum_{\pi \in BNC_S(n,m)_d} \kappa_\pi(\underbrace{a_1, a_2, a_1, a_2, \ldots, a_1, a_2}_{a_1 \text{ occurs }n \text{ times}}, \underbrace{b_1, b_2, b_1, b_2, \ldots, b_1, b_2}_{b_1 \text{ occurs }m \text{ times}} )\right) z^n w^m,
\]
then
\[
K_{a_1a_2, b_1b_2}(z,w) = \Psi_e(z,w) + \Psi_o(z,w).
\]
We will derive expressions for $\Psi_e(z,w)$ and $\Psi_o(z,w)$ beginning with $\Psi_e(z,w)$.  We will not use the same rigour as we did in Section \ref{sec:T} as most of the arguments are similar.

\begin{lem}
\label{lem:S-case-1}
Under the above notation and assumptions,
\[
\Psi_e(z,w) = K_{a_2, b_2}\left(\phi_{f_2 \check{\ast} f_1}(z), \phi_{g_2 \check{\ast} g_1}(w) \right). 
\]
\end{lem}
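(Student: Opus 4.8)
The plan is to mirror the argument of Lemma \ref{lem:T-case-1}, now decomposing \emph{both} the left and the right nodes, since both faces alternate between the two bi-free pairs. Fix $n,m \geq 1$ and $\pi \in BNC_S(n,m)_e$, and let $V$ denote the block of $\pi$ realizing the smallest $\min$-value among the mixed blocks (those containing both a left and a right node); by hypothesis $\min(V)$ is even, so by the parity restriction $V$ consists entirely of even-index nodes and therefore contributes a single $(a_2,b_2)$-cumulant. First I would record the structural fact that $V$ is in fact the \emph{only} mixed block of $\pi$, every other block being pure-left or pure-right. Indeed, two distinct blocks each containing a left and a right node must nest in the bi-non-crossing order, and together with $\pi \vee \sigma_{n,m} = 1_{2n,2m}$ this is incompatible with $\min(V)$ being even: the pure-left odd block containing $1_\ell$ would then be unable to connect to a second mixed block without crossing $V$.

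Granting this, I would write $V = \{(2k^L_1)_\ell, \ldots, (2k^L_{n'})_\ell\} \cup \{(2k^R_1)_r, \ldots, (2k^R_{m'})_r\}$ with $n', m' \geq 1$, so that $\kappa_\pi$ factors as $\kappa_{n',m'}(a_2,b_2)$ times the contributions of the pure blocks. The even-index left nodes of $V$ cut the left spine into $n'$ consecutive regions, and non-crossing with $V$ forces every remaining left node into exactly one of them; exactly as in Lemma \ref{lem:T-case-1}, after adjoining the trailing core node as a singleton, the even ($a_2$) part of each region is naturally an element of $NC'$ and the odd ($a_1$) part is its Kreweras complement. Summing each left region over all of $NC'$ and its complements produces, via the definition of the pinched convolution, a factor $(f_2 \check{\ast} f_1)(0_{i_p},1_{i_p})$ weighted by $z^{i_p}$; the identical analysis on the right yields factors $(g_2 \check{\ast} g_1)(0_{j_q},1_{j_q})$ weighted by $w^{j_q}$.

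Finally I would assemble the sum. Summing over all $\rho \in BNC_S(n,m)_e$ with a fixed core block $V$ replaces each of the $n'$ left regions by $\phi_{f_2 \check{\ast} f_1}(z)$ and each of the $m'$ right regions by $\phi_{g_2 \check{\ast} g_1}(w)$; summing over all $n,m$ and all admissible core blocks then amounts to summing independently over $n',m' \geq 1$ and over all region sizes, giving
\[
\Psi_e(z,w) = \sum_{n',m' \geq 1} \kappa_{n',m'}(a_2,b_2)\left(\phi_{f_2 \check{\ast} f_1}(z)\right)^{n'}\left(\phi_{g_2 \check{\ast} g_1}(w)\right)^{m'} = K_{a_2,b_2}\left(\phi_{f_2 \check{\ast} f_1}(z), \phi_{g_2 \check{\ast} g_1}(w)\right),
\]
using the definition of $K_{a_2,b_2}$ in equation (\ref{eq:bi-K}). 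The main obstacle is the first step: justifying that the even case forces a unique mixed block and that every other node localizes into a pure left or right region carrying the $NC'$/Kreweras structure. Once this bi-non-crossing bookkeeping is in place, the pinched-convolution computation is routine and strictly parallel to Lemma \ref{lem:T-case-1}.
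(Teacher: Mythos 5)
Your proposal is correct and follows essentially the same route as the paper: identify the unique mixed block $V$ of even-indexed nodes (which the paper also only asserts parenthetically, and which must contain $(2n)_\ell$ and $(2m)_r$ by connectivity), observe that it cuts the left and right spines into regions each carrying the $NC'$/Kreweras-complement structure, and convert each region into a pinched-convolution factor before resumming. The only cosmetic difference is that you spell out the uniqueness of the mixed block, whereas the paper leans on the minimality of $\min(V)$ together with the parity constraint implicitly.
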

\begin{proof}
Fix $n,m \geq 1$.  If $\pi \in BNC_S(n,m)_e$, let $V_\pi$ denote the first (and, as it happens, only) block of $\pi$, as measured from the top of $\pi$'s bi-non-crossing diagram, that has both left and right nodes.  Since $\pi \vee \sigma_{n,m} = 1_{2n,2m}$, there exist $t,s\geq 1$, $1\leq l_1 < l_2 < \cdots < l_t = n$, and $1 \leq k_1 < k_2 < \cdots < k_s = m$ such that
\[
V_\pi = \{(2l_p)_\ell\}^t_{p=1} \cup \{(2k_q)_r\}^s_{q=1}.
\]
Note $V_\pi$ divides the remaining left nodes into $t$ disjoint regions and the remaining right nodes into $s$ disjoint regions. Moreover, each block of $\pi$ can only contain nodes in one such region.  Below is an example of such a $\pi$.
\begin{align*}
	\begin{tikzpicture}[baseline]
	\draw[thick, dashed] (-1,5.75) -- (-1,-.25) -- (1,-.25) -- (1,5.75);
	\draw[thick, blue, densely dotted] (1, 5.5) -- (0.75, 5.25) -- (1, 5);
	\draw[thick, blue, densely dotted] (1, 4.5) -- (0.75, 4.25) -- (1, 4);
	\draw[thick, blue, densely dotted] (1, 3.5) -- (0.75, 3.25) -- (1, 3);
	\draw[thick, blue, densely dotted] (1, 2.5) -- (0.75, 2.25) -- (1, 2);
	\draw[thick, blue, densely dotted] (1, 1.5) -- (0.75, 1.25) -- (1, 1);
	\draw[thick, blue, densely dotted] (1, 0.5) -- (0.75, 0.25) -- (1, 0);
	\draw[thick, blue, densely dotted] (-1, 5.5) -- (-0.75, 5.25) -- (-1, 5);
	\draw[thick, blue, densely dotted] (-1, 4.5) -- (-0.75, 4.25) -- (-1, 4);
	\draw[thick, blue, densely dotted] (-1, 3.5) -- (-0.75, 3.25) -- (-1, 3);
	\draw[thick, blue, densely dotted] (-1, 2.5) -- (-0.75, 2.25) -- (-1, 2);
	\draw[thick, blue, densely dotted] (-1, 1.5) -- (-0.75, 1.25) -- (-1, 1);
	\draw[thick, ggreen] (1,5) -- (0,5) -- (0,0) -- (1,0);
	\draw[thick] (-1,5.5) -- (-.5, 5.5) -- (-.5, 3.5) -- (-1, 3.5);
	\draw[thick] (1, 4.5) -- (.25, 4.5) -- (.25, 2.5) -- (1, 2.5);
	\draw[thick] (1, 4) -- (.5, 4) -- (.5, 3) -- (1, 3);
	\draw[thick] (1,1.5) -- (.5, 1.5) -- (.5, 0.5) -- (1, 0.5);
	\draw[thick] (-1, 1.5) -- (-.5, 1.5) -- (-.5, 2.5) -- (-1, 2.5);
	\draw[thick] (-1, 4.5) -- (-.5, 4.5);
	\draw[thick, ggreen] (-1, 3) -- (0, 3);
	\draw[thick, ggreen] (-1, 1) -- (0, 1);
	\draw[thick, ggreen] (1, 2) -- (0, 2);
	\node[left] at (-1, 5.5) {$1_\ell$};
	\draw[fill=black] (-1,5.5) circle (0.05);
	\node[left] at (-1, 5) {$2_\ell$};
	\draw[fill=black] (-1,5) circle (0.05);
	\node[left] at (-1, 4.5) {$3_\ell$};
	\draw[fill=black] (-1,4.5) circle (0.05);
	\node[left] at (-1, 4) {$4_\ell$};
	\draw[fill=black] (-1,4) circle (0.05);
	\node[left] at (-1, 3.5) {$5_\ell$};
	\draw[fill=black] (-1,3.5) circle (0.05);
	\node[left] at (-1, 3) {$6_\ell$};
	\draw[ggreen, fill=ggreen] (-1,3) circle (0.05);
	\node[left] at (-1, 2.5) {$7_\ell$};
	\draw[fill=black] (-1,2.5) circle (0.05);
	\node[left] at (-1, 2) {$8_\ell$};
	\draw[fill=black] (-1,2) circle (0.05);
	\node[left] at (-1, 1.5) {$9_\ell$};
	\draw[fill=black] (-1,1.5) circle (0.05);
	\node[left] at (-1, 1) {$10_\ell$};
	\draw[ggreen, fill=ggreen] (-1,1) circle (0.05);
	\draw[fill=black] (1,5.5) circle (0.05);
	\node[right] at (1,5.5) {$1_r$};
	\draw[ggreen, fill=ggreen] (1,5) circle (0.05);
	\node[right] at (1,5) {$2_r$};
	\draw[fill=black] (1,4.5) circle (0.05);
	\node[right] at (1,4.5) {$3_r$};
	\draw[ggreen, fill=ggreen] (1,0) circle (0.05);
	\node[right] at (1,4) {$4_r$};
	\draw[fill=black] (1,4) circle (0.05);
	\node[right] at (1,3.5) {$5_r$};
	\draw[fill=black] (1,3.5) circle (0.05);
	\node[right] at (1,3) {$6_r$};
	\draw[fill=black] (1,3) circle (0.05);
	\node[right] at (1,2.5) {$7_r$};
	\draw[fill=black] (1,2.5) circle (0.05);
	\node[right] at (1,2) {$8_r$};
	\draw[ggreen, fill=ggreen] (1,2) circle (0.05);
	\node[right] at (1,1.5) {$9_r$};
	\draw[fill=black] (1,1.5) circle (0.05);
	\node[right] at (1,1) {$10_r$};
	\draw[fill=black] (1,1) circle (0.05);
	\node[right] at (1,0.5) {$11_r$};
	\draw[fill=black] (1,0.5) circle (0.05);
	\node[right] at (1,0) {$12_r$};
	\end{tikzpicture}
\end{align*}

Let $E = \{(2k)_\ell\}^{n}_{k=1} \cup \{(2k)_r\}^{m}_{k=1}$ and let $O = \{(2k-1)_\ell\}^{n}_{k=1} \cup \{(2k-1)_r\}^{m}_{k=1}$.  For each $1 \leq p \leq t$, let $i_p = l_p - l_{p-1}$, where $l_0 = 0$, and let $\pi_{\ell, p}$ denote the non-crossing partition obtained by restricting $\pi$ to $\{(2l_{p-1} + 1)_\ell, (2l_{p-1}+2)_\ell, \ldots, (2l_{p}-1)_\ell\}$.  Note that $\sum^t_{p=1} i_p = n$.  Furthermore, as explained in Lemma \ref{lem:T-case-1}, if $\pi'_{\ell, p}$ is obtained from $\pi_{\ell, p}$ by adding the singleton block $\{(2l_p)_\ell\}$, then $\pi'_{\ell, p}|_{E}$ is naturally an element of $NC'(i_p)$ and $\pi'_{\ell, p}|_O$ is naturally an element of $NC(i_p)$, which must be $K(\pi'_{\ell, p}|_E)$ in order for $\pi \vee \sigma_{n,m} = 1_{2n,2m}$. 

Similarly, for each $1 \leq q \leq s$, let $j_q = k_q - k_{q-1}$, where $k_0 = 0$, and let $\pi_{r, q}$ denote the non-crossing partition obtained by restricting $\pi$ to $\{(2k_{q-1} + 1)_r, (2k_{q-1}+2)_r, \ldots, (2k_{q}-1)_r\}$.  Note that $\sum^s_{q=1} j_q = m$.  Furthermore, as explained in Lemma \ref{lem:T-case-1}, if $\pi'_{r, q}$ is obtained from $\pi_{r, q}$ by adding the singleton block $\{(2k_q)_r\}$, then $\pi'_{r, q}|_{E}$ is naturally an element of $NC'(j_q)$ and $\pi'_{r, q}|_O$ is naturally an element of $NC(j_q)$, which must be $K(\pi'_{r, q}|_E)$ in order for $\pi \vee \sigma_{n,m} = 1_{2n,2m}$. 

Expanding
\[
\kappa_\rho(\underbrace{a_1, a_2, \ldots, a_1, a_2}_{a_1 \text{ occurs }n \text{ times}}, \underbrace{b_1, b_2, \ldots, b_1, b_2}_{b_1 \text{ occurs }m \text{ times}}) z^n w^m
\]
for $\rho \in BNC_S(n,m)_e$ and  summing such terms with $V_\rho = V_\pi$, we obtain 
\[
\kappa_{t, s}(a_2, b_2)\left(\prod^t_{p=1} (f_2 \check{\ast} f_1)(0_{i_p}, 1_{i_p}) z^{i_p} \right) \left(\prod^s_{q=1} (g_2 \check{\ast} g_1)(0_{j_q}, 1_{j_q}) w^{j_q}\right).
\]
Finally, if we sum over all possible $n,m\geq 1$ and all possible $V_\pi$ (so, in the above equation, we get all possible $t, s\geq 1$ and all possible $i_p, j_q\geq 1$), we obtain that 
\begin{align*}
\Psi_e(z,w) &= \sum_{t,s \geq 1} \kappa_{t, s}(a_2, b_2)\left(\prod^t_{p=1} \phi_{f_2 \check{\ast} f_1}(z) \right) \left(\prod^s_{q=1} \phi_{g_2 \check{\ast} g_1}(z)\right)  \\
& = \sum_{t,s \geq 1} \kappa_{t, s}(a_2, b_2) \left(\phi_{f_2 \check{\ast} f_1}(z)\right)^t \left(\phi_{g_2 \check{\ast} g_1}(w)\right)^s = K_{a_2, b_2}\left(\phi_{f_2 \check{\ast} f_1}(z), \phi_{g_2 \check{\ast} g_1}(w) \right). \qedhere
\end{align*}
\end{proof}

In order to discuss $\Psi_o(z,w)$, it will be quite helpful to discuss subcases.  For $n,m \geq 0$, let $\sigma'_{n,m} $ denote the element of $BNC(2n+1, 2m+1)$ with blocks $\{\{1_\ell, 1_r\}\} \cup \{\{(2l)_\ell, (2l+1)_\ell\}\}_{l=1}^n  \cup \{\{(2k)_r, (2k+1)_r\}\}_{k=1}^m$.  Let $BNC_S(n,m)'_o$ denote the set of all $\pi \in BNC(2n+1,2m+1)$ such that $\pi \vee \sigma'_{n,m} = 1_{2n+1,2m+1}$ and contains no blocks with both a $(2k)_{\theta_1}$ and a $(2j-1)_{\theta_2}$ any $\theta_1, \theta_2 \in \{\ell, r\}$ and any $k, j$.  We desired to divide up $BNC_S(n,m)'_o$ further.  For $\pi \in BNC_S(n,m)'_o$, let $V_{\pi, \ell}$ denote the block of $\pi$ containing $1_\ell$ and let $V_{\pi, r}$ denote the block of $\pi$ containing $1_r$.  Then
\begin{align*}
BNC_S(n,m)_{o,0} &= \{\pi \in BNC_S(n,m)'_o \, \mid \,  V_{\pi, \ell} \text{ has no right nodes and } V_{\pi, r} \text{ has no left nodes}\},\\
BNC_S(n,m)_{o,r} &= \{\pi \in BNC_S(n,m)'_o \, \mid \,  V_{\pi, \ell} \text{ has no right nodes but } V_{\pi, r} \text{ has left nodes}\}, \\
BNC_S(n,m)_{o,\ell} &= \{\pi \in BNC_S(n,m)'_o \, \mid \,  V_{\pi, \ell} \text{ has right nodes but } V_{\pi, r} \text{ has no left nodes}\} \text{ and} \\
BNC_S(n,m)_{o, \ell r} &= \{\pi \in BNC_S(n,m)'_o \, \mid \, V_{\pi, \ell} = V_{\pi, r}\}.
\end{align*}
Due to the nature of bi-non-crossing partitions, the above sets are disjoint and have union $BNC_S(n,m)'_o$.  

For $d \in \{0, r, \ell, \ell r\}$, define
\begin{align*}
\Psi_{o, d}(z,w) := \sum_{n,m\geq 0} \left(\sum_{\pi \in BNC_S(n,m)_{o,d}} \kappa_\pi(\underbrace{a_2, a_1, a_2, a_1, \ldots, a_1, a_2}_{a_1 \text{ occurs }n \text{ times}}, \underbrace{b_2, b_1, b_2, b_1, \ldots, b_1, b_2}_{b_1 \text{ occurs }m \text{ times}})   \right) z^{n+1} w^{m+1}.
\end{align*}

\begin{lem}
\label{lem:S-case-2}
Under the above notation and assumptions, 
\[
\Psi_{o,0}(z,w) = zw\cdot \frac{\phi_{f_2}\left( \phi_{f_2 \check{\ast} f_1}(z)\right)\phi_{g_2}\left( \phi_{g_2 \check{\ast} g_1}(w)\right)}{\phi_{f_2 \check{\ast} f_1}(z) \phi_{g_2 \check{\ast} g_1}(w)}.
\]
\end{lem}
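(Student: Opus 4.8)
The plan is to exploit the decoupling of the left and right variables that the defining condition of $BNC_S(n,m)_{o,0}$ forces, and then to run the pinched-convolution bookkeeping from the proofs of Lemmas \ref{lem:T-case-1} and \ref{lem:T-case-2} separately on each side.

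First I would fix $n,m \geq 0$ and $\pi \in BNC_S(n,m)_{o,0}$ and examine the two distinguished blocks. Since $\pi$ has no block containing both an even and an odd node, the block $V_{\pi, \ell}$ of $1_\ell$ consists of odd left nodes only and the block $V_{\pi, r}$ of $1_r$ consists of odd right nodes only; these are precisely the $a_2$- and $b_2$-labelled nodes. The key structural observation is that because $V_{\pi, \ell}$ carries no right node and $V_{\pi, r}$ carries no left node, the only way $\pi \vee \sigma'_{n,m} = 1_{2n+1,2m+1}$ can hold is for every left node to be joined to $V_{\pi, \ell}$ using only left nodes and left $\sigma'$-pairings, for every right node to be joined to $V_{\pi, r}$ analogously, and for the two halves to meet solely through the $\sigma'$-pair $\{1_\ell, 1_r\}$. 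Consequently $\pi$ splits into a bi-non-crossing partition on $\{1_\ell, \ldots, (2n+1)_\ell\}$ and one on $\{1_r, \ldots, (2m+1)_r\}$, the product $\kappa_\pi$ factors accordingly, and the sum defining $\Psi_{o,0}(z,w)$ factors as a product of a left sum in $z$ and a right sum in $w$.

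Next I would evaluate the left sum. Writing $V_{\pi, \ell} = \{(2k_q - 1)_\ell\}_{q=1}^s$ with $1 = k_1 < \cdots < k_s = n+1$ (the last index being forced by connectivity exactly as in Lemma \ref{lem:T-case-2}), this block carries no right node and hence contributes the pure cumulant $\kappa_s(a_2)$, while the node $1_\ell$ is a standalone product accounting for a single factor of $z$. The $s-1$ regions strictly between consecutive nodes of $V_{\pi, \ell}$ are independent; on each such region of size $j_q = k_{q+1} - k_q$ the restriction to even nodes lies in $NC'(j_q)$, the restriction to odd nodes is its Kreweras complement, and (just as in Lemmas \ref{lem:T-case-1} and \ref{lem:T-case-2}, using $\varphi(a_2) = 1$ to account for the added singleton) the sum over all such restrictions produces $(f_2 \check{\ast} f_1)(0_{j_q}, 1_{j_q}) z^{j_q}$. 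Summing over all admissible $V_{\pi, \ell}$ then gives
\[
z \sum_{s \geq 1} \kappa_s(a_2) \left(\phi_{f_2 \check{\ast} f_1}(z)\right)^{s-1} = \frac{z\, \phi_{f_2}\!\left(\phi_{f_2 \check{\ast} f_1}(z)\right)}{\phi_{f_2 \check{\ast} f_1}(z)},
\]
where the last equality uses $\phi_{f_2}(x) = \sum_{s \geq 1} \kappa_s(a_2) x^s$. The right sum is identical after interchanging $(f_1, f_2, z)$ with $(g_1, g_2, w)$, yielding $w\, \phi_{g_2}(\phi_{g_2 \check{\ast} g_1}(w)) / \phi_{g_2 \check{\ast} g_1}(w)$, and multiplying the two factors gives the claimed formula.

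I expect the main obstacle to be rigorously justifying the decoupling in the first step, that is, proving that the $o,0$ condition genuinely splits $\pi$ into independent left and right halves joined only through $\{1_\ell, 1_r\}$, with no surviving cross-term permitted by bi-non-crossingness. Once this is established, the left and right computations are faithful copies of the pinched-convolution argument already carried out in Section \ref{sec:T}, so the remaining degree bookkeeping (tracking that the standalone nodes $1_\ell$ and $1_r$ each contribute one factor of $z$ and $w$, and that the $s-1$ interior regions produce the power $s-1$) is routine.
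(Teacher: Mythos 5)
Your argument is correct and follows essentially the same route as the paper's proof: identify $V_{\pi,\ell}$ and $V_{\pi,r}$ as consisting only of odd (that is, $a_2$- and $b_2$-labelled) nodes, with the last odd left and right nodes forced into these blocks by connectivity, observe that every remaining block is confined to a purely-left or purely-right region so that the sum factors into a $z$-part times a $w$-part, and run the pinched-convolution bookkeeping on each side separately. The only slip is in the parity bookkeeping for a left region: it is the odd ($a_2$) nodes together with the adjoined singleton $(2k_q-1)_\ell$ that form the element of $NC'(j_q)$, and the even ($a_1$) nodes that are forced to be its Kreweras complement --- not the reverse, as you wrote --- which is precisely what produces $(f_2 \check{\ast} f_1)(0_{j_q}, 1_{j_q})$ rather than $(f_1 \check{\ast} f_2)(0_{j_q}, 1_{j_q})$; since your displayed formulas already use the correct order, nothing downstream is affected.
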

\begin{proof}
Fix $n,m \geq 0$.  If $\pi \in BNC_S(n,m)_{o,0}$, then, since $\pi \vee \sigma'_{n,m} = 1_{2n+1,2m+1}$, there exist $t,s\geq 1$, $1 = l_1 < l_2 < \cdots < l_t = n+1$, and $1 = k_1 < k_2 < \cdots < k_s = m+1$ such that
\[
V_{\pi, \ell} = \{(2l_p-1)_\ell\}^t_{p=1} \qqand  V_{\pi, r} = \{(2k_q-1)_r\}^s_{q=1}.
\]
Note $V_{\pi, \ell}$ divides the remaining left nodes into $t-1$ disjoint regions and $V_{\pi, r}$ divides the remaining right nodes into $s-1$ disjoint regions. Moreover, each block of $\pi$ can only contain nodes in one such region.  The following is an example of such a $\pi$.
\begin{align*}
	\begin{tikzpicture}[baseline]
	\draw[thick, dashed] (-1,5.75) -- (-1,.25) -- (1,.25) -- (1,5.75);
	\draw[thick, blue, densely dotted] (1, 5.5) -- (0, 5.75) -- (-1, 5.5);
	\draw[thick, blue, densely dotted] (1, 5) -- (0.75, 4.75) -- (1, 4.5);
	\draw[thick, blue, densely dotted] (1, 4) -- (0.75, 3.75) -- (1, 3.5);
	\draw[thick, blue, densely dotted] (1, 3) -- (0.75, 2.75) -- (1, 2.5);
	\draw[thick, blue, densely dotted] (1, 2) -- (0.75, 1.75) -- (1, 1.5);
	\draw[thick, blue, densely dotted] (-1, 1) -- (-0.75, 0.75) -- (-1, 0.5);
	\draw[thick, blue, densely dotted] (-1, 5) -- (-0.75, 4.75) -- (-1, 4.5);
	\draw[thick, blue, densely dotted] (-1, 4) -- (-0.75, 3.75) -- (-1, 3.5);
	\draw[thick, blue, densely dotted] (-1, 3) -- (-0.75, 2.75) -- (-1, 2.5);
	\draw[thick, blue, densely dotted] (-1, 2) -- (-0.75, 1.75) -- (-1, 1.5);
	\draw[thick, blue, densely dotted] (-1, 1) -- (-0.75, 0.75) -- (-1, 0.5);
	\draw[thick] (1, 5) -- (.375, 5) -- (.375, 3) -- (1, 3);	
	\draw[thick] (1, 4.5) -- (.625, 4.5) -- (.625, 3.5) -- (1, 3.5);
	\draw[thick, red] (1, 5.5) -- (.125, 5.5) -- (.125, 1.5) -- (1, 1.5);
	\draw[thick, ggreen] (-1, 5.5) -- (-.125, 5.5) -- (-.125, 0.5) -- (-1, 0.5);
	\draw[thick, ggreen] (-1, 4.5) -- (-.125, 4.5);
	\draw[thick, ggreen] (-1, 2.5) -- (-.125, 2.5);
	\draw[thick, red] (1, 2.5) -- (.125, 2.5);
	\draw[thick] (-1, 4) -- (-.375, 4) -- (-.375, 3) -- (-1, 3);
	\draw[thick] (-1, 2) -- (-.375, 2) -- (-.375, 1) -- (-1, 1);
	\node[left] at (-1, 5.5) {$1_\ell$};
	\draw[ggreen, fill=ggreen] (-1,5.5) circle (0.05);
	\node[left] at (-1, 5) {$2_\ell$};
	\draw[fill=black] (-1,5) circle (0.05);
	\node[left] at (-1, 4.5) {$3_\ell$};
	\draw[ggreen, fill=ggreen] (-1,4.5) circle (0.05);
	\node[left] at (-1, 4) {$4_\ell$};
	\draw[fill=black] (-1,4) circle (0.05);
	\node[left] at (-1, 3.5) {$5_\ell$};
	\draw[fill=black] (-1,3.5) circle (0.05);
	\node[left] at (-1, 3) {$6_\ell$};
	\draw[fill=black] (-1,3) circle (0.05);
	\node[left] at (-1, 2.5) {$7_\ell$};
	\draw[ggreen, fill=ggreen] (-1,2.5) circle (0.05);
	\node[left] at (-1, 2) {$8_\ell$};
	\draw[fill=black] (-1,2) circle (0.05);
	\node[left] at (-1, 1.5) {$9_\ell$};
	\draw[fill=black] (-1,1.5) circle (0.05);
	\node[left] at (-1, 1) {$10_\ell$};
	\draw[fill=black] (-1,1) circle (0.05);
	\node[left] at (-1, .5) {$11_\ell$};
	\draw[ggreen, fill=ggreen] (-1,.5) circle (0.05);
	\draw[red, fill=red] (1,5.5) circle (0.05);
	\node[right] at (1,5.5) {$1_r$};
	\draw[fill=black] (1,5) circle (0.05);
	\node[right] at (1,5) {$2_r$};
	\draw[fill=black] (1,4.5) circle (0.05);
	\node[right] at (1,4.5) {$3_r$};
	\node[right] at (1,4) {$4_r$};
	\draw[fill=black] (1,4) circle (0.05);
	\node[right] at (1,3.5) {$5_r$};
	\draw[fill=black] (1,3.5) circle (0.05);
	\node[right] at (1,3) {$6_r$};
	\draw[fill=black] (1,3) circle (0.05);
	\node[right] at (1,2.5) {$7_r$};
	\draw[red, fill=red] (1,2.5) circle (0.05);
	\node[right] at (1,2) {$8_r$};
	\draw[fill=black] (1,2) circle (0.05);
	\node[right] at (1,1.5) {$9_r$};
	\draw[red, fill=red] (1,1.5) circle (0.05);
	\end{tikzpicture}
\end{align*}

If $i_p = l_{p+1} - l_{p}$ and $j_q = k_{q+1} - k_{q}$, then $\sum^{t-1}_{p=1} i_p = n$ and $\sum^{s-1}_{q=1} j_q = m$.  Using similar arguments to those in Lemma \ref{lem:S-case-1}, expanding
\[
\kappa_\rho(\underbrace{a_2, a_1, a_2, a_1, \ldots, a_1, a_2}_{a_1 \text{ occurs }n \text{ times}}, \underbrace{b_2, b_1, b_2, b_1, \ldots, b_1, b_2}_{b_1 \text{ occurs }m \text{ times}}) z^{n+1} w^{m+1}
\]
for $\rho \in BNC_S(n,m)_{o,0}$ and summing all terms with $V_{\rho, \ell} = V_{\pi, \ell}$ and $V_{\rho, r} = V_{\pi, r}$, we obtain 
\[
zw \cdot \kappa_{t}(a_2) \kappa_s(b_2)\left(\prod^{t-1}_{p=1} (f_2 \check{\ast} f_1)(0_{i_p}, 1_{i_p}) z^{i_p} \right) \left(\prod^{s-1}_{q=1} (g_2 \check{\ast} g_1)(0_{j_q}, 1_{j_q}) w^{j_q}\right).
\]
Finally, if we sum over all possible $n,m\geq 0$ and all possible $V_{\pi, \ell}$ and $V_{\pi, r}$ (so, in the above equation, we get all possible $t, s\geq 1$ and all possible $i_p, j_q\geq 1$), we obtain that 
\begin{align*}
\Psi_e(z,w) &= zw\sum_{t,s \geq 1} \kappa_{t}(a_2) \kappa_s(b_2)\left(\prod^{t-1}_{p=1} \phi_{f_2 \check{\ast} f_1}(z) \right) \left(\prod^{s-1}_{q=1} \phi_{g_2 \check{\ast} g_1}(z)\right)  \\
& = zw\sum_{t,s \geq 1} \kappa_{t}(a_2) \kappa_s(b_2) \left(\phi_{f_2 \check{\ast} f_1}(z)\right)^{t-1} \left(\phi_{g_2 \check{\ast} g_1}(w)\right)^{s-1} = zw\cdot \frac{\phi_{f_2}\left( \phi_{f_2 \check{\ast} f_1}(z)\right)\phi_{g_2}\left( \phi_{g_2 \check{\ast} g_1}(w)\right)}{\phi_{f_2 \check{\ast} f_1}(z) \phi_{g_2 \check{\ast} g_1}(w)}. \qedhere
\end{align*}

\end{proof}

\begin{lem}
\label{lem:S-case-3}
Under the above notation and assumptions, 
\[
\Psi_{o,r}(z,w) = \frac{w \cdot \phi_{f_1 \check{\ast} f_2}(z)}{\phi_{g_2 \check{\ast} g_1}(w)} K_{a_2, b_2}\left(\phi_{f_2 \check{\ast} f_1}(z), \phi_{g_2 \check{\ast} g_1}(w)\right).
\]
\end{lem}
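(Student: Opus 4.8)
The plan is to follow the template of Lemmas \ref{lem:S-case-1} and \ref{lem:S-case-2}: fix $n,m \geq 0$, determine the shape that a partition $\pi \in BNC_S(n,m)_{o,r}$ must have, expand $\kappa_\pi$ as a product of full $(\ell,r)$-cumulants, sum over all $\pi$ sharing the same pair of distinguished blocks $V_{\pi,\ell}$ and $V_{\pi,r}$, and finally sum over $n,m$ and over all admissible choices of these two blocks.

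First I would pin down the structure of the two distinguished blocks. Since $1_r \in V_{\pi,r}$ and no block mixes even- and odd-indexed nodes, every node of $V_{\pi,r}$ is odd and hence carries an $a_2$ or a $b_2$; writing $V_{\pi,r}$ with $t$ left nodes and $s$ right nodes, it contributes a factor $\kappa_{t,s}(a_2,b_2)$. Likewise $V_{\pi,\ell}$ is a purely left block of odd ($a_2$) nodes containing $1_\ell$. The non-crossing condition together with $\pi \vee \sigma'_{n,m} = 1_{2n+1,2m+1}$ forces the portion of the left column lying above the topmost left node $P$ of $V_{\pi,r}$ to be a self-contained region containing $V_{\pi,\ell}$, joined to the rest of the diagram only through the $\sigma'_{n,m}$-edge $\{1_\ell, 1_r\}$.

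Next I would read off the three sources of factors. The mixed block $V_{\pi,r}$, together with the left regions cut out by its left nodes and the right regions cut out by its right nodes, behaves exactly as the principal block of Lemma \ref{lem:S-case-1}: each such region is a pinched-convolution unit in which the odd nodes form an element of $NC'$ and the even nodes its Kreweras complement, so that summing over the region-partitions turns $V_{\pi,r}$ into the factor $K_{a_2,b_2}(\phi_{f_2 \check{\ast} f_1}(z), \phi_{g_2 \check{\ast} g_1}(w))$, the only caveat being that $1_r$ sits at the very top of the right column so the region above it is degenerate. The left region above $P$ is the genuinely new feature: being anchored at its bottom (rather than its top) node, its even nodes now play the role of the $NC'$ partition and its odd nodes that of the complement, so summing over it produces $\phi_{f_1 \check{\ast} f_2}(z)$ with the reversed convolution order. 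Finally the bookkeeping factor $w/\phi_{g_2 \check{\ast} g_1}(w)$ records that the index of $1_r$ is consumed by the degenerate top-right region, replacing one expected power of $\phi_{g_2 \check{\ast} g_1}(w)$ by a bare $w$. Assembling these and summing over all $n,m \geq 0$ and all choices of $V_{\pi,\ell}, V_{\pi,r}$ (hence over all $t,s \geq 1$ and all region sizes) should yield the claimed identity.

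The main obstacle is justifying the orientation of the pinched convolution in the top-left region, namely explaining why it is $f_1 \check{\ast} f_2$ rather than $f_2 \check{\ast} f_1$: because this region is anchored at the even node paired by $\sigma'_{n,m}$ with the first left node of $V_{\pi,r}$ instead of at $1_\ell$, the roles of the even ($a_1$) and odd ($a_2$) nodes in the $NC'$/Kreweras-complement dichotomy of Section \ref{sec:preliminaries} are interchanged. Verifying carefully that this reversed region still meets the non-crossing and join conditions, and that no left node above $P$ can leak into $V_{\pi,r}$, is the delicate point; once this is settled, the remaining region-sum manipulations are routine adaptations of the computations in Lemmas \ref{lem:S-case-1} and \ref{lem:S-case-2}.
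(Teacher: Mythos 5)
Your overall strategy is the paper's: fix the distinguished blocks, convert each region they cut out into a pinched-convolution factor, and resum. But there is a genuine gap in your accounting of the left column. Write $V_{\pi,r} = \{(2l_p-1)_\ell\}_{p=1}^t \cup \{(2k_q-1)_r\}_{q=1}^s$ with $1 < l_1 < \cdots < l_t = n+1$. Its $t$ left nodes cut the remaining left nodes into exactly $t$ runs (the run below $(2l_t-1)_\ell = (2n+1)_\ell$ is empty), of which the $t-1$ lower ones are of the standard Lemma \ref{lem:S-case-1} type, each contributing one factor $\phi_{f_2\check{\ast}f_1}(z)$, while the topmost run $\{1_\ell,\dots,(2l_1-2)_\ell\}$ is the one containing $V_{\pi,\ell}$ --- your region ``above $P$.'' To reach the stated formula the left column must produce $(\phi_{f_2\check{\ast}f_1}(z))^{t}\cdot\phi_{f_1\check{\ast}f_2}(z)$, so the topmost run has to contribute \emph{two} factors, $\phi_{f_2\check{\ast}f_1}(z)\cdot\phi_{f_1\check{\ast}f_2}(z)$, not the single factor $\phi_{f_1\check{\ast}f_2}(z)$ you assign to it. As written, your decomposition either produces only $t-1$ powers of $\phi_{f_2\check{\ast}f_1}(z)$ (leaving a spurious $\phi_{f_2\check{\ast}f_1}(z)$ in the denominator) or counts the top run twice, once as an ordinary region feeding $K_{a_2,b_2}$ and once as the ``new feature.''

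The missing idea is that the top run splits further at $(2l_0-1)_\ell$, the lowest node of $V_{\pi,\ell}$. The sub-run $\{(2l_0)_\ell,\dots,(2l_1-2)_\ell\}$ strictly between $V_{\pi,\ell}$ and the first left node of $V_{\pi,r}$ is anchored by the $\sigma'_{n,m}$-edge $\{(2l_1-2)_\ell,(2l_1-1)_\ell\}$ into $V_{\pi,r}$ and carries the \emph{usual} orientation (odd/$a_2$ nodes forming the $NC'$ partition, even/$a_1$ nodes its Kreweras complement), yielding the $t^{\mathrm{th}}$ factor $(f_2\check{\ast}f_1)(0_{i_1},1_{i_1})z^{i_1}$ with $i_1 = l_1-l_0$. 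Only the sub-run $\{1_\ell,\dots,(2l_0-1)_\ell\}$ has the reversed orientation: there $V_{\pi,\ell}$ itself, together with the other odd/$a_2$ blocks, must form the Kreweras complement of the partition of the $l_0-1$ even $a_1$-nodes, the distinguished $NC'$-singleton being a phantom anchor attached through $1_\ell$'s $\sigma'_{n,m}$-edge to $1_r$ (harmless since $\kappa_1(a_1)=1$); this is what produces $(f_1\check{\ast}f_2)(0_{i_0},1_{i_0})z^{i_0}$ with $i_0=l_0$. Note that your stated reason for the reversal --- anchoring at the even node paired with the first left node of $V_{\pi,r}$ --- actually describes the anchor of the lower, standard-orientation sub-run. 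Once the top run is split in this way, the remaining resummation goes through exactly as you outline.
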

\begin{proof}
Fix $n,m \geq 0$.  Note $BNC_S(0,m)_{o,r} = \emptyset$   by definition.

If $\pi \in BNC_S(n,m)_{o,r}$, then, since $\pi \vee \sigma'_{n,m} = 1_{2n+1,2m+1}$, there exist $t,s\geq 1$, $1 < l_1 < l_2 < \cdots < l_t = n+1$, and $1 = k_1 < k_2 < \cdots < k_s = m+1$ such that
\[
V_{\pi, r} = \{(2l_p-1)_\ell\}^t_{p=1}  \cup  \{(2k_q-1)_r\}^s_{q=1}.
\]
Note $V_{\pi, r}$ divides the remaining right nodes into $s-1$ disjoint regions and the remaining left nodes into $t$ regions.  However, the top region is special.  If $l_0$ is the largest natural number such that $(2l_0 -1)_\ell \in V_{\pi, \ell}$, then $l_0$ further divides the top region on the left into two regions. Note each block of $\pi$ can only contain nodes in one such region.
The following is an example of such a $\pi$ for which $l_0 = 3$, with one part of the special region ($1_\ell, \ldots, 5_\ell$) shaded differently.
\begin{align*}
	\begin{tikzpicture}[baseline]
	\draw[thick, dashed] (-1,5.75) -- (-1,-.75) -- (1,-.75) -- (1,5.75);
	\draw[thick, blue, densely dotted] (1, 5.5) -- (0, 5.75) -- (-1, 5.5);
	\draw[thick, blue, densely dotted] (1, 5) -- (0.75, 4.75) -- (1, 4.5);
	\draw[thick, blue, densely dotted] (1, 4) -- (0.75, 3.75) -- (1, 3.5);
	\draw[thick, blue, densely dotted] (1, 3) -- (0.75, 2.75) -- (1, 2.5);
	\draw[thick, blue, densely dotted] (1, 2) -- (0.75, 1.75) -- (1, 1.5);
	\draw[thick, blue, densely dotted] (-1, 1) -- (-0.75, 0.75) -- (-1, 0.5);
	\draw[thick, blue, densely dotted] (-1, 5) -- (-0.75, 4.75) -- (-1, 4.5);
	\draw[thick, blue, densely dotted] (-1, 4) -- (-0.75, 3.75) -- (-1, 3.5);
	\draw[thick, blue, densely dotted] (-1, 3) -- (-0.75, 2.75) -- (-1, 2.5);
	\draw[thick, blue, densely dotted] (-1, 2) -- (-0.75, 1.75) -- (-1, 1.5);
	\draw[thick, blue, densely dotted] (-1, 1) -- (-0.75, 0.75) -- (-1, 0.5);
	\draw[thick, blue, densely dotted] (-1, 0) -- (-0.75, -0.25) -- (-1, -0.5);
	\draw[thick] (1, 5) -- (.375, 5) -- (.375, 3) -- (1, 3);	
	\draw[thick] (1, 4.5) -- (.625, 4.5) -- (.625, 3.5) -- (1, 3.5);
	\draw[thick, ggreen] (1, 5.5) -- (0, 5.5) -- (0, -.5) -- (-1, -.5);
	\draw[thick, ggreen] (-1, 1.5) -- (1, 1.5);
	\draw[thick, ggreen] (1, 2.5) -- (0, 2.5);
	\draw[thick] (-1,0) -- (-.375, 0) -- (-.375, 1) -- (-1,1);
	\draw[thick] (-1,2) -- (-.375, 2) -- (-.375, 3) -- (-1,3);
	\draw[thick, red] (-1, 5.5) -- (-.375, 5.5) -- (-.375, 3.5) -- (-1, 3.5);
	\draw[thick, red] (-1, 5) -- (-.625, 5) -- (-.625, 4) -- (-1, 4);
	\node[left] at (-1, 5.5) {$1_\ell$};
	\draw[red, fill=red] (-1,5.5) circle (0.05);
	\node[left] at (-1, 5) {$2_\ell$};
	\draw[red, fill=red] (-1,5) circle (0.05);
	\node[left] at (-1, 4.5) {$3_\ell$};
	\draw[red, fill=red] (-1,4.5) circle (0.05);
	\node[left] at (-1, 4) {$4_\ell$};
	\draw[red, fill=red] (-1,4) circle (0.05);
	\node[left] at (-1, 3.5) {$5_\ell$};
	\draw[red, fill=red] (-1,3.5) circle (0.05);
	\node[left] at (-1, 3) {$6_\ell$};
	\draw[fill=black] (-1,3) circle (0.05);
	\node[left] at (-1, 2.5) {$7_\ell$};
	\draw[fill=black] (-1,2.5) circle (0.05);
	\node[left] at (-1, 2) {$8_\ell$};
	\draw[fill=black] (-1,2) circle (0.05);
	\node[left] at (-1, 1.5) {$9_\ell$};
	\draw[ggreen, fill=ggreen] (-1,1.5) circle (0.05);
	\node[left] at (-1, 1) {$10_\ell$};
	\draw[fill=black] (-1,1) circle (0.05);
	\node[left] at (-1, .5) {$11_\ell$};
	\draw[fill=black] (-1,.5) circle (0.05);
	\node[left] at (-1, 0) {$12_\ell$};
	\draw[fill=black] (-1,0) circle (0.05);
	\node[left] at (-1, -.5) {$13_\ell$};
	\draw[ggreen, fill=ggreen] (-1,-.5) circle (0.05);
	\draw[ggreen, fill=ggreen] (1,5.5) circle (0.05);
	\node[right] at (1,5.5) {$1_r$};
	\draw[fill=black] (1,5) circle (0.05);
	\node[right] at (1,5) {$2_r$};
	\draw[fill=black] (1,4.5) circle (0.05);
	\node[right] at (1,4.5) {$3_r$};
	\node[right] at (1,4) {$4_r$};
	\draw[fill=black] (1,4) circle (0.05);
	\node[right] at (1,3.5) {$5_r$};
	\draw[fill=black] (1,3.5) circle (0.05);
	\node[right] at (1,3) {$6_r$};
	\draw[fill=black] (1,3) circle (0.05);
	\node[right] at (1,2.5) {$7_r$};
	\draw[ggreen, fill=ggreen] (1,2.5) circle (0.05);
	\node[right] at (1,2) {$8_r$};
	\draw[fill=black] (1,2) circle (0.05);
	\node[right] at (1,1.5) {$9_r$};
	\draw[ggreen, fill=ggreen] (1,1.5) circle (0.05);
	\end{tikzpicture}
\end{align*}

Let $i_0 = l_0$, $i_p = l_{p} - l_{p-1}$ when $p \neq 0$, and $j_q = k_{q+1} - k_{q}$.  Thus $\sum^t_{p=0} i_p = n+1$ and $\sum^{s-1}_{q=1} j_q = m$.  Using similar arguments to those in Lemma \ref{lem:S-case-1}, expanding
\[
\kappa_\rho(\underbrace{a_2, a_1, a_2, a_1, \ldots, a_1, a_2}_{a_1 \text{ occurs }n \text{ times}}, \underbrace{b_2, b_1, b_2, b_1, \ldots, b_1, b_2}_{b_1 \text{ occurs }m \text{ times}}) z^{n+1} w^{m+1}
\]
for $\rho \in BNC_S(n,m)_{o,r}$ and summing all terms with $V_{\rho, \ell} = V_{\pi, \ell}$ and $V_{\rho, r} = V_{\pi, r}$, we obtain 
\[
w \cdot \kappa_{t, s}(a_2, b_2)\left(\prod^{t}_{p=1} (f_2 \check{\ast} f_1)(0_{i_p}, 1_{i_p}) z^{i_p} \right) \left(\prod^{s-1}_{q=1} (g_2 \check{\ast} g_1)(0_{j_q}, 1_{j_q}) w^{j_q}\right) \left((f_1 \check{\ast} f_2)(0_{i_0}, 1_{i_0})z^{i_0}\right).
\]
Note for $p \geq 2$, each $(f_2 \check{\ast} f_1)(0_{i_p}, 1_{i_p}) z^{i_p}$ comes from the $p^{\mathrm{th}}$ region  from the top on the left, where as the top region on the left gives $(f_2 \check{\ast} f_1)(0_{i_1}, 1_{i_1}) z^{i_1} $ using the partitions below $(2l_0-1)_\ell$ and gives $(f_1 \check{\ast} f_2)(0_{i_0}, 1_{i_0})z^{i_0}$ using the partitions above and including $(2l_0-1)_\ell$.

Finally, if we sum over all possible $n,m\geq 0$ and all possible $V_{\pi, \ell}$ and $V_{\pi, r}$ (so, in the above equation, we get all possible $t, s\geq 1$ and all possible $i_p, j_q\geq 1$), we obtain that 
\begin{align*}
\Psi_e(z,w) &= w\sum_{t,s \geq 1} \kappa_{t, s}(a_2, b_2)\left(\prod^{t}_{p=1} \phi_{f_2 \check{\ast} f_1}(z) \right) \left(\prod^{s-1}_{q=1} \phi_{g_2 \check{\ast} g_1}(z)\right) \left(\phi_{f_1 \check{\ast} f_2}(z) \right) \\
& = w\sum_{t,s \geq 1} \kappa_{t, s}(a_2, b_2) \left(\phi_{f_2 \check{\ast} f_1}(z)\right)^{t} \left(\phi_{g_2 \check{\ast} g_1}(w)\right)^{s-1} \left(\phi_{f_1 \check{\ast} f_2}(z) \right) \\
&= \frac{w \cdot \phi_{f_1 \check{\ast} f_2}(z)}{\phi_{g_2 \check{\ast} g_1}(w)} K_{a_2, b_2}\left(\phi_{f_2 \check{\ast} f_1}(z), \phi_{g_2 \check{\ast} g_1}(w)\right). \qedhere
\end{align*}

\end{proof}

\begin{lem}
\label{lem:S-case-4}
Under the above notation and assumptions,
\[
\Psi_{o,\ell}(z,w) = \frac{z \cdot \phi_{g_1 \check{\ast} g_2}(w)}{\phi_{f_2 \check{\ast} f_1}(z)} K_{a_2, b_2}\left(\phi_{f_2 \check{\ast} f_1}(z), \phi_{g_2 \check{\ast} g_1}(w)\right) .
\]
\end{lem}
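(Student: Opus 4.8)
The plan is to run a direct computation strictly parallel to the proof of Lemma \ref{lem:S-case-3}, with the geometric roles of the left and right columns interchanged. First I would record the structure of a partition $\pi \in BNC_S(n,m)_{o,\ell}$. Here the mixed block is $V_{\pi,\ell}$ (it contains $1_\ell$ and has right nodes) while $V_{\pi,r}$ is pure right, so the condition $\pi \vee \sigma'_{n,m} = 1_{2n+1,2m+1}$ together with the parity restriction forces
\[
V_{\pi,\ell} = \{(2l_p-1)_\ell\}_{p=1}^t \cup \{(2k_q-1)_r\}_{q=1}^s
\]
for some $t,s \geq 1$ with $1 = l_1 < \cdots < l_t = n+1$ and $1 < k_1 < \cdots < k_s = m+1$. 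Note $l_1 = 1$ because $V_{\pi,\ell}$ contains $1_\ell$, whereas $k_1 > 1$ because $1_r$ lies in the distinct pure block $V_{\pi,r}$. I would also observe that $BNC_S(n,0)_{o,\ell} = \emptyset$, mirroring $BNC_S(0,m)_{o,r} = \emptyset$.

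Next I would analyze how $V_{\pi,\ell}$ cuts the remaining nodes into regions. Its $t$ left nodes cut the left column into $t-1$ interior regions (there is no top region since $1_\ell \in V_{\pi,\ell}$, and none below since $(2n+1)_\ell \in V_{\pi,\ell}$), each contributing a factor $(f_2 \check{\ast} f_1)(0_{i_p}, 1_{i_p}) z^{i_p}$ exactly as in Lemma \ref{lem:S-case-1}. Its $s$ right nodes cut the right column into $s-1$ interior regions plus a single special top-right region lying above $(2k_1-1)_r$, which houses the pure block $V_{\pi,r}$; splitting this region at $k_0$ (the largest index with $(2k_0-1)_r \in V_{\pi,r}$) exactly as the top-left region was split in Lemma \ref{lem:S-case-3} yields one further interior factor $g_2 \check{\ast} g_1$ together with the reversed factor $(g_1 \check{\ast} g_2)(0_{j_0}, 1_{j_0}) w^{j_0}$. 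The mixed block itself contributes $\kappa_{t,s}(a_2,b_2)$, since its $t$ left nodes and $s$ right nodes are all odd-indexed and hence carry $a_2$ and $b_2$ respectively. Summing over all $\rho$ with $V_{\rho,\ell} = V_{\pi,\ell}$ and $V_{\rho,r} = V_{\pi,r}$ should then give
\[
z \cdot \kappa_{t,s}(a_2,b_2) \left(\prod_{p=1}^{t-1}(f_2 \check{\ast} f_1)(0_{i_p},1_{i_p}) z^{i_p}\right)\left(\prod_{q=1}^{s}(g_2 \check{\ast} g_1)(0_{j_q},1_{j_q}) w^{j_q}\right)(g_1 \check{\ast} g_2)(0_{j_0},1_{j_0}) w^{j_0},
\]
where the single leftover power of $z$ reflects that $V_{\pi,\ell}$ is anchored at the top of the left column, the mirror of the leftover $w$ in Lemma \ref{lem:S-case-3}.

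Finally I would sum over all $n,m \geq 0$ and all admissible $V_{\pi,\ell}, V_{\pi,r}$, which ranges over all $t,s \geq 1$ and all $i_p, j_q \geq 1$, so that each region-factor is replaced by the corresponding series $\phi_{f_2 \check{\ast} f_1}(z)$, $\phi_{g_2 \check{\ast} g_1}(w)$, or $\phi_{g_1 \check{\ast} g_2}(w)$. This produces
\[
\Psi_{o,\ell}(z,w) = z\, \phi_{g_1 \check{\ast} g_2}(w)\sum_{t,s\geq 1}\kappa_{t,s}(a_2,b_2)\left(\phi_{f_2 \check{\ast} f_1}(z)\right)^{t-1}\left(\phi_{g_2 \check{\ast} g_1}(w)\right)^{s},
\]
and factoring out $\phi_{f_2 \check{\ast} f_1}(z)^{-1}$ to restore the full power $t$ identifies the remaining sum as $K_{a_2,b_2}\!\left(\phi_{f_2 \check{\ast} f_1}(z),\phi_{g_2 \check{\ast} g_1}(w)\right)$, yielding the claimed formula.

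The step I would be most careful about is the special top-right region, and in particular justifying that it contributes the \emph{reversed} pinched convolution $g_1 \check{\ast} g_2$ rather than $g_2 \check{\ast} g_1$. This is the exact mirror of the $f_1 \check{\ast} f_2$ versus $f_2 \check{\ast} f_1$ dichotomy in Lemma \ref{lem:S-case-3}, and it arises because the pure block $V_{\pi,r}$ is anchored from the top of its region (making its odd restriction an element of $NC'$) rather than being anchored from below by an even singleton as in the interior regions. I would also want to stress that no naive left-right symmetry can replace this computation: the central cumulant $\kappa_{t,s}(a_2,b_2)$ keeps $a_2$ on the left and $b_2$ on the right in both the $o,r$ and $o,\ell$ cases and is \emph{not} invariant under interchanging these roles, so it is only the geometry of the surrounding regions, not the operators, that is reflected.
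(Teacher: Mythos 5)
Your proof is correct and takes essentially the same approach as the paper: the paper's own proof is the single sentence that the result ``can be obtained by applying a mirror to Lemma \ref{lem:S-case-3},'' and you have simply carried out that mirror in detail, correctly identifying the leftover factor of $z$, the special top-right region contributing the reversed pinched convolution $g_1 \check{\ast} g_2$, and the final resummation. Your closing caveat that the mirror is purely geometric --- since $\kappa_{t,s}(a_2,b_2)$ is not symmetric under interchanging its left and right arguments --- is exactly the right point to flag.
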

\begin{proof}
The proof of this result can be obtained by applying a mirror to Lemma \ref{lem:S-case-3}.
\end{proof}

\begin{lem}
\label{lem:S-case-5}
Under the above notation and assumptions,
\[
\Psi_{o, \ell r}(z,w) = \frac{zw}{\phi_{f_2 \check{\ast} f_1}(z) \phi_{g_2 \check{\ast} g_1}(w)} K_{a_2, b_2}\left(\phi_{f_2 \check{\ast} f_1}(z),  \phi_{g_2 \check{\ast} g_1}(w)  \right).
\]
\end{lem}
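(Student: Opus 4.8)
The plan is to mimic the proof of Lemma \ref{lem:S-case-1} almost verbatim, the only structural novelty being that the distinguished block is now the common block $V := V_{\pi,\ell} = V_{\pi,r}$ containing both $1_\ell$ and $1_r$. First I would fix $n,m \geq 0$ and $\pi \in BNC_S(n,m)_{o,\ell r}$ and determine the shape of $V$. Since $\pi \vee \sigma'_{n,m} = 1_{2n+1,2m+1}$ and $\pi$ has no block mixing even- and odd-indexed nodes, $V$ must be built entirely from odd-indexed nodes, so there are $t,s \geq 1$ and indices $1 = l_1 < \cdots < l_t = n+1$, $1 = k_1 < \cdots < k_s = m+1$ with
\[
V = \{(2l_p-1)_\ell\}_{p=1}^t \cup \{(2k_q-1)_r\}_{q=1}^s.
\]
The crucial observation is that, because the left arguments run $a_2, a_1, a_2, \ldots$ (so the odd positions carry $a_2$) and likewise on the right, the block $V$ consists solely of $a_2$'s and $b_2$'s and therefore contributes the single joint factor $\kappa_{t,s}(a_2,b_2)$ when $\kappa_\pi$ is expanded as a product of full $(\ell,r)$-cumulants. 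This is precisely where $BNC_S(n,m)_{o,\ell r}$ differs from the $o,0$ case of Lemma \ref{lem:S-case-2}: there the analogous odd-indexed nodes split into two separate blocks and produced $\kappa_t(a_2)\kappa_s(b_2)$, whereas here they coalesce into one block.

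Next I would analyze the regions cut out by $V$. Exactly as in Lemma \ref{lem:S-case-2}, the $t$ left nodes of $V$ (whose extremes are $1_\ell$ and $(2n+1)_\ell$) divide the remaining left nodes into $t-1$ intervals, and the $s$ right nodes divide the remaining right nodes into $s-1$ intervals, with each region confined to a single block of $\pi$. Setting $i_p = l_{p+1}-l_p$ and $j_q = k_{q+1}-k_q$, the $p$-th left region $\{(2l_p)_\ell, \ldots, (2l_{p+1}-2)_\ell\}$, after adjoining the singleton $\{(2l_p-1)_\ell\}$, restricts on its odd nodes to an element of $NC'(i_p)$ and on its even nodes to the corresponding Kreweras complement in $NC(i_p)$, just as in Lemma \ref{lem:S-case-1}. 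Since the odd nodes now carry $a_2$ and the even nodes carry $a_1$, summing over all admissible region-partitions yields the pinched-convolution factor $(f_2 \check{\ast} f_1)(0_{i_p}, 1_{i_p})\, z^{i_p}$, and symmetrically each right region yields $(g_2 \check{\ast} g_1)(0_{j_q}, 1_{j_q})\, w^{j_q}$. I would cite the pinched-convolution computation already carried out in Lemmas \ref{lem:T-case-1}, \ref{lem:T-case-2}, and \ref{lem:S-case-1} rather than repeat it.

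Finally I would assemble and sum. Fixing $V$ and summing over all $\rho$ with $V_{\rho,\ell} = V_{\rho,r} = V$ gives
\[
\kappa_{t,s}(a_2,b_2) \left(\prod_{p=1}^{t-1} (f_2 \check{\ast} f_1)(0_{i_p}, 1_{i_p})\, z^{i_p}\right)\left(\prod_{q=1}^{s-1} (g_2 \check{\ast} g_1)(0_{j_q}, 1_{j_q})\, w^{j_q}\right),
\]
and summing over all $n,m \geq 0$ and all choices of $V$ (equivalently all $t,s \geq 1$ and all $i_p, j_q \geq 1$) collapses each region-product into a power of $\phi_{f_2 \check{\ast} f_1}(z)$ or $\phi_{g_2 \check{\ast} g_1}(w)$. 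The one place that demands care is the exponent bookkeeping: since $\sum_{p=1}^{t-1} i_p = l_t - l_1 = n$ and $\sum_{q=1}^{s-1} j_q = m$, while the definition of $\Psi_{o,\ell r}$ carries the weight $z^{n+1}w^{m+1}$, an extra factor $zw$ survives, giving
\[
\Psi_{o,\ell r}(z,w) = zw \sum_{t,s \geq 1} \kappa_{t,s}(a_2,b_2)\left(\phi_{f_2 \check{\ast} f_1}(z)\right)^{t-1}\left(\phi_{g_2 \check{\ast} g_1}(w)\right)^{s-1}.
\]
Pulling one factor of $\phi_{f_2 \check{\ast} f_1}(z)$ and one of $\phi_{g_2 \check{\ast} g_1}(w)$ into the denominator turns the remaining sum into $K_{a_2,b_2}\left(\phi_{f_2 \check{\ast} f_1}(z), \phi_{g_2 \check{\ast} g_1}(w)\right)$, which is the asserted formula. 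The main obstacle is thus not any hard estimate but the combinatorial verification that the combined block $V$ spawns exactly $t-1$ and $s-1$ interior regions (rather than $t$ and $s$ as in Lemma \ref{lem:S-case-1}); this deficit of one region on each side is precisely what manufactures the compensating $zw$ and reconciles the $z^{n+1}w^{m+1}$ normalization with the power of $K_{a_2,b_2}$.
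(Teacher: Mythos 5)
Your proof is correct and is essentially the paper's argument written out in full: the paper compresses it to the observation that merging $V_{\pi,\ell}$ and $V_{\pi,r}$ gives a bijection $BNC_S(n,m)_{o,0}\to BNC_S(n,m)_{o,\ell r}$, so one simply replaces $\kappa_t(a_2)\kappa_s(b_2)$ by $\kappa_{t,s}(a_2,b_2)$ in Lemma \ref{lem:S-case-2}, which is exactly the region-by-region computation you carry out directly. Your accounting of the $t-1$ and $s-1$ interior regions and the resulting surviving factor $zw$ matches the paper's.
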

\begin{proof}
The proof of this result follows from the proof of Lemma \ref{lem:S-case-2} by replacing each occurrence of $\kappa_{t}(a_2) \kappa_s(b_2)$ with $\kappa_{t,s}(a_2, b_2)$.  Indeed there is a bijection from $BNC_S(n,m)_{o, 0}$ to $BNC_S(n,m)_{o,\ell r}$ where given $\pi \in BNC_S(n,m)_{o,0}$ we produce $\pi'\in BNC_S(n,m)_{o,\ell r}$ by joining $V_{\pi, \ell}$ and $V_{\pi, r}$ into a single block. 
\begin{align*}
	\begin{tikzpicture}[baseline]
	\draw[thick, blue, densely dotted] (1, 5.5) -- (0, 5.75) -- (-1, 5.5);
	\draw[thick, blue, densely dotted] (1, 5) -- (0.75, 4.75) -- (1, 4.5);
	\draw[thick, blue, densely dotted] (1, 4) -- (0.75, 3.75) -- (1, 3.5);
	\draw[thick, blue, densely dotted] (1, 3) -- (0.75, 2.75) -- (1, 2.5);
	\draw[thick, blue, densely dotted] (1, 2) -- (0.75, 1.75) -- (1, 1.5);
	\draw[thick, blue, densely dotted] (-1, 1) -- (-0.75, 0.75) -- (-1, 0.5);
	\draw[thick, blue, densely dotted] (-1, 5) -- (-0.75, 4.75) -- (-1, 4.5);
	\draw[thick, blue, densely dotted] (-1, 4) -- (-0.75, 3.75) -- (-1, 3.5);
	\draw[thick, blue, densely dotted] (-1, 3) -- (-0.75, 2.75) -- (-1, 2.5);
	\draw[thick, blue, densely dotted] (-1, 2) -- (-0.75, 1.75) -- (-1, 1.5);
	\draw[thick, blue, densely dotted] (-1, 1) -- (-0.75, 0.75) -- (-1, 0.5);
	\draw[thick] (1, 5) -- (.375, 5) -- (.375, 3) -- (1, 3);	
	\draw[thick] (1, 4.5) -- (.625, 4.5) -- (.625, 3.5) -- (1, 3.5);
	\draw[thick] (1, 5.5) -- (.125, 5.5) -- (.125, 1.5) -- (1, 1.5);
	\draw[thick] (-1, 5.5) -- (-.125, 5.5) -- (-.125, 0.5) -- (-1, 0.5);
	\draw[thick] (-1, 4.5) -- (-.125, 4.5);
	\draw[thick] (-1, 2.5) -- (-.125, 2.5);
	\draw[thick] (1, 2.5) -- (.125, 2.5);
	\draw[thick] (-1, 4) -- (-.375, 4) -- (-.375, 3) -- (-1, 3);
	\draw[thick] (-1, 2) -- (-.375, 2) -- (-.375, 1) -- (-1, 1);
	\node[left] at (-1, 5.5) {$1_\ell$};
	\draw[fill=black] (-1,5.5) circle (0.05);
	\node[left] at (-1, 5) {$2_\ell$};
	\draw[fill=black] (-1,5) circle (0.05);
	\node[left] at (-1, 4.5) {$3_\ell$};
	\draw[fill=black] (-1,4.5) circle (0.05);
	\node[left] at (-1, 4) {$4_\ell$};
	\draw[fill=black] (-1,4) circle (0.05);
	\node[left] at (-1, 3.5) {$5_\ell$};
	\draw[fill=black] (-1,3.5) circle (0.05);
	\node[left] at (-1, 3) {$6_\ell$};
	\draw[fill=black] (-1,3) circle (0.05);
	\node[left] at (-1, 2.5) {$7_\ell$};
	\draw[fill=black] (-1,2.5) circle (0.05);
	\node[left] at (-1, 2) {$8_\ell$};
	\draw[fill=black] (-1,2) circle (0.05);
	\node[left] at (-1, 1.5) {$9_\ell$};
	\draw[fill=black] (-1,1.5) circle (0.05);
	\node[left] at (-1, 1) {$10_\ell$};
	\draw[fill=black] (-1,1) circle (0.05);
	\node[left] at (-1, .5) {$11_\ell$};
	\draw[fill=black] (-1,.5) circle (0.05);
	\draw[fill=black] (1,5.5) circle (0.05);
	\node[right] at (1,5.5) {$1_r$};
	\draw[fill=black] (1,5) circle (0.05);
	\node[right] at (1,5) {$2_r$};
	\draw[fill=black] (1,4.5) circle (0.05);
	\node[right] at (1,4.5) {$3_r$};
	\node[right] at (1,4) {$4_r$};
	\draw[fill=black] (1,4) circle (0.05);
	\node[right] at (1,3.5) {$5_r$};
	\draw[fill=black] (1,3.5) circle (0.05);
	\node[right] at (1,3) {$6_r$};
	\draw[fill=black] (1,3) circle (0.05);
	\node[right] at (1,2.5) {$7_r$};
	\draw[fill=black] (1,2.5) circle (0.05);
	\node[right] at (1,2) {$8_r$};
	\draw[fill=black] (1,2) circle (0.05);
	\node[right] at (1,1.5) {$9_r$};
	\draw[fill=black] (1,1.5) circle (0.05);
	\draw[thick, dashed] (-1,5.75) -- (-1,.25) -- (1,.25) -- (1,5.75);
	\draw[thick] (2, 3) -- (3,3) -- (2.90, 2.90);
	\draw[thick] (3,3) -- (2.90, 3.1);
	\end{tikzpicture}
	\quad	
	\begin{tikzpicture}[baseline]
	\draw[thick, dashed] (-1,5.75) -- (-1,.25) -- (1,.25) -- (1,5.75);
	\draw[thick, blue, densely dotted] (1, 5.5) -- (0, 5.75) -- (-1, 5.5);
	\draw[thick, blue, densely dotted] (1, 5) -- (0.75, 4.75) -- (1, 4.5);
	\draw[thick, blue, densely dotted] (1, 4) -- (0.75, 3.75) -- (1, 3.5);
	\draw[thick, blue, densely dotted] (1, 3) -- (0.75, 2.75) -- (1, 2.5);
	\draw[thick, blue, densely dotted] (1, 2) -- (0.75, 1.75) -- (1, 1.5);
	\draw[thick, blue, densely dotted] (-1, 1) -- (-0.75, 0.75) -- (-1, 0.5);
	\draw[thick, blue, densely dotted] (-1, 5) -- (-0.75, 4.75) -- (-1, 4.5);
	\draw[thick, blue, densely dotted] (-1, 4) -- (-0.75, 3.75) -- (-1, 3.5);
	\draw[thick, blue, densely dotted] (-1, 3) -- (-0.75, 2.75) -- (-1, 2.5);
	\draw[thick, blue, densely dotted] (-1, 2) -- (-0.75, 1.75) -- (-1, 1.5);
	\draw[thick, blue, densely dotted] (-1, 1) -- (-0.75, 0.75) -- (-1, 0.5);
	\draw[thick] (1, 5) -- (.375, 5) -- (.375, 3) -- (1, 3);	
	\draw[thick] (1, 4.5) -- (.625, 4.5) -- (.625, 3.5) -- (1, 3.5);
	\draw[thick, ggreen] (1, 5.5) -- (0, 5.5) -- (0, 1.5) -- (1, 1.5);
	\draw[thick, ggreen] (-1, 5.5) -- (0, 5.5) -- (0, 0.5) -- (-1, 0.5);
	\draw[thick, ggreen] (-1, 4.5) -- (0, 4.5);
	\draw[thick, ggreen] (-1, 2.5) -- (0, 2.5);
	\draw[thick, ggreen] (1, 2.5) -- (0, 2.5);
	\draw[thick] (-1, 4) -- (-.375, 4) -- (-.375, 3) -- (-1, 3);
	\draw[thick] (-1, 2) -- (-.375, 2) -- (-.375, 1) -- (-1, 1);
	\node[left] at (-1, 5.5) {$1_\ell$};
	\draw[ggreen, fill=ggreen] (-1,5.5) circle (0.05);
	\node[left] at (-1, 5) {$2_\ell$};
	\draw[fill=black] (-1,5) circle (0.05);
	\node[left] at (-1, 4.5) {$3_\ell$};
	\draw[ggreen, fill=ggreen] (-1,4.5) circle (0.05);
	\node[left] at (-1, 4) {$4_\ell$};
	\draw[fill=black] (-1,4) circle (0.05);
	\node[left] at (-1, 3.5) {$5_\ell$};
	\draw[fill=black] (-1,3.5) circle (0.05);
	\node[left] at (-1, 3) {$6_\ell$};
	\draw[fill=black] (-1,3) circle (0.05);
	\node[left] at (-1, 2.5) {$7_\ell$};
	\draw[ggreen, fill=ggreen] (-1,2.5) circle (0.05);
	\node[left] at (-1, 2) {$8_\ell$};
	\draw[fill=black] (-1,2) circle (0.05);
	\node[left] at (-1, 1.5) {$9_\ell$};
	\draw[fill=black] (-1,1.5) circle (0.05);
	\node[left] at (-1, 1) {$10_\ell$};
	\draw[fill=black] (-1,1) circle (0.05);
	\node[left] at (-1, .5) {$11_\ell$};
	\draw[ggreen, fill=ggreen] (-1,.5) circle (0.05);
	\draw[ggreen, fill=ggreen] (1,5.5) circle (0.05);
	\node[right] at (1,5.5) {$1_r$};
	\draw[fill=black] (1,5) circle (0.05);
	\node[right] at (1,5) {$2_r$};
	\draw[fill=black] (1,4.5) circle (0.05);
	\node[right] at (1,4.5) {$3_r$};
	\node[right] at (1,4) {$4_r$};
	\draw[fill=black] (1,4) circle (0.05);
	\node[right] at (1,3.5) {$5_r$};
	\draw[fill=black] (1,3.5) circle (0.05);
	\node[right] at (1,3) {$6_r$};
	\draw[fill=black] (1,3) circle (0.05);
	\node[right] at (1,2.5) {$7_r$};
	\draw[ggreen, fill=ggreen] (1,2.5) circle (0.05);
	\node[right] at (1,2) {$8_r$};
	\draw[fill=black] (1,2) circle (0.05);
	\node[right] at (1,1.5) {$9_r$};
	\draw[ggreen, fill=ggreen] (1,1.5) circle (0.05);
	\end{tikzpicture}
\end{align*}
\end{proof}

\begin{lem}
\label{lem:S-case-6}
Under the above notation and assumptions, 
\[
\Psi_{o}(z,w) = \frac{1}{\phi_{f_1 \check{\ast} f_2}(z) \phi_{g_1 \check{\ast} g_2}(w)} \Psi_{o'}(z,w) K_{a_1, b_1}\left( \phi_{f_1 \check{\ast} f_2}(z), \phi_{g_1 \check{\ast} g_2}(w)   \right)
\]
where
\[
\Psi_{o'}(z,w) = \Psi_{o,0}(z,w) + \Psi_{o,r}(z,w) + \Psi_{o,\ell}(z,w) + \Psi_{o,\ell r}(z,w).
\]
\end{lem}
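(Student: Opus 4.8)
The plan is to follow the template of Lemma \ref{lem:T-case-3}, extracting the topmost block of $\pi$ that mixes left and right nodes and recognising the part of $\pi$ lying below it as an element of $BNC_S(\cdot,\cdot)'_o$. Fix $n,m \geq 1$ and $\pi \in BNC_S(n,m)_o$, and let $V$ denote the block of $\pi$ with smallest $\min$-value among all blocks containing both left and right nodes. Since no block of $\pi$ contains both an even- and an odd-indexed node, and $\min(V)$ is odd by definition of $BNC_S(n,m)_o$, the block $V$ consists entirely of odd-indexed nodes; write $V = \{(2l_p-1)_\ell\}_{p=1}^t \cup \{(2k_q-1)_r\}_{q=1}^s$ with $1 \leq l_1 < \cdots < l_t \leq n$ and $1 \leq k_1 < \cdots < k_s \leq m$. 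Because the odd-indexed entries are $a_1$ and $b_1$, expanding $\kappa_\pi$ as a product of full $(\ell,r)$-cumulants produces the factor $\kappa_{t,s}(a_1,b_1)$ from $V$.

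First I would record the structural point that $l_1 = k_1 = 1$, i.e.\ $1_\ell, 1_r \in V$. If instead $l_1 > 1$, the nodes $1_\ell, \ldots, (2l_1-2)_\ell$ lie above $V$ in non-mixing blocks, and their $\sigma_{n,m}$-pairs $\{(2j-1)_\ell,(2j)_\ell\}$ with $j < l_1$ are all internal to this top region; non-crossing forbids any block of $\pi$ from reaching below $(2l_1-1)_\ell \in V$ without containing it, and no mixing block lies above $V$, so this region would be a separate component of $\pi \vee \sigma_{n,m}$, contradicting $\pi \vee \sigma_{n,m} = 1_{2n,2m}$. (This is precisely where the odd case differs from Lemma \ref{lem:S-case-1}: there the capping node is even and its $\sigma$-partner sits above it, linking the top region downward.) Consequently $V$ splits the remaining left nodes into the $t-1$ regular regions between consecutive $(2l_p-1)_\ell$ together with one special left region $\{(2l_t)_\ell, \ldots, (2n)_\ell\}$, and likewise on the right.

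Next I would evaluate the contributions as in Lemma \ref{lem:S-case-1}. Adjoining the capping node $(2l_p-1)_\ell$ as a singleton to the $p$-th regular left region realises its odd part as an element of $NC'$ and its even part as the Kreweras complement, so summing over the region gives a factor $\phi_{f_1 \check{\ast} f_2}(z)$ (with $f_1$ on the $NC'$ part, since the cap is $a_1$); the $t-1$ regular left regions thus produce $(\phi_{f_1 \check{\ast} f_2}(z))^{t-1}$ and the $s-1$ regular right regions produce $(\phi_{g_1 \check{\ast} g_2}(w))^{s-1}$. For the special region I would identify its first left node $(2l_t)_\ell$ and first right node $(2k_s)_r$ — which are $a_2$ and $b_2$, matching the leading entries of the tuples defining $\Psi_{o'}$ — as the phantom nodes $1_\ell, 1_r$ of a partition $\pi' \in BNC_S(n-l_t, m-k_s)'_o$, the internal $\sigma_{n,m}$-pairs restricting to the internal $\sigma'$-pairs. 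Summing the special-region contributions over all choices yields exactly $\Psi_{o'}(z,w)$, whose extra factor $z\,w$ accounts for the two product-pairs straddling $V$. Gathering the pieces and summing over $t,s \geq 1$ gives
\begin{align*}
\Psi_o(z,w) = \Psi_{o'}(z,w) \sum_{t,s\geq 1} \kappa_{t,s}(a_1,b_1) (\phi_{f_1 \check{\ast} f_2}(z))^{t-1} (\phi_{g_1 \check{\ast} g_2}(w))^{s-1},
\end{align*}
which, after factoring out $\phi_{f_1 \check{\ast} f_2}(z)^{-1}\phi_{g_1 \check{\ast} g_2}(w)^{-1}$ and recognising the remaining sum as $K_{a_1,b_1}(\phi_{f_1 \check{\ast} f_2}(z), \phi_{g_1 \check{\ast} g_2}(w))$, is the claim.

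The main obstacle will be making the special-region correspondence precise: one must check that restricting $\pi$ to the nodes below the bottom nodes of $V$ and adjoining the phantom block $\{1_\ell,1_r\}$ lands in $BNC_S(n-l_t,m-k_s)'_o$ — in particular that the join condition with $\sigma'_{n-l_t,m-k_s}$ holds, which is exactly where the path $(2l_t)_\ell \sim_\sigma (2l_t-1)_\ell \in V \ni (2k_s-1)_r \sim_\sigma (2k_s)_r$ is replaced by the phantom pairing — and that this assignment is a bijection onto $BNC_S(n-l_t,m-k_s)'_o$, so each primed partition is counted exactly once. Once that correspondence is established, the remaining generating-function manipulation is routine.
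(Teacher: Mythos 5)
Your proposal follows essentially the same route as the paper's proof: extract the topmost mixing block $V_\pi=\{(2l_p-1)_\ell\}_{p=1}^t\cup\{(2k_q-1)_r\}_{q=1}^s$ with $l_1=k_1=1$, convert the $t-1$ left and $s-1$ right regular regions into pinched-convolution factors via the $NC'$/Kreweras pairing, identify the bottom region (with its two straddling $\sigma$-pairs) as an element of $BNC_S(i_t-1,j_s-1)'_o$ contributing $\Psi_{o'}(z,w)$, and resum to get $\frac{1}{\phi_{f_1\check{\ast}f_2}(z)\phi_{g_1\check{\ast}g_2}(w)}\Psi_{o'}(z,w)K_{a_1,b_1}(\phi_{f_1\check{\ast}f_2}(z),\phi_{g_1\check{\ast}g_2}(w))$. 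The argument is correct; your explicit justification that $l_1=k_1=1$ (which the paper simply asserts) and your flagged verification of the bijection onto $BNC_S(\cdot,\cdot)'_o$ are refinements of detail, not a different method.
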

\begin{proof}

Fix $n,m \geq 1$.  If $\pi \in BNC_S(n,m)_o$, let $V_\pi$ denote the first block of $\pi$, as measured from the top of $\pi$'s bi-non-crossing diagram, that has both left and right nodes.  Since $\pi \in BNC_S(n,m)_o$, there exist $t,s\geq 1$, $1= l_1 < l_2 < \cdots < l_t \leq n$, and $1 = k_1 < k_2 < \cdots < k_s \leq m$ such that
\[
V_\pi = \{(2l_p-1)_\ell\}^t_{p=1} \cup \{(2k_q-1)_r\}^s_{q=1}.
\]
Note $V_\pi$ divides the remaining left nodes and right nodes into $t-1$ disjoint regions on the left, $s-1$ disjoint regions on the right, and one region on the bottom. Moreover, each block of $\pi$ can only contain nodes in one such region.   Below is an example of such a $\pi$.
\begin{align*}
	\begin{tikzpicture}[baseline]
	\draw[thick, dashed] (-1,5.75) -- (-1,-.25) -- (1,-.25) -- (1,5.75);
	\draw[thick, blue, densely dotted] (1, 5.5) -- (0.75, 5.25) -- (1, 5);
	\draw[thick, blue, densely dotted] (1, 4.5) -- (0.75, 4.25) -- (1, 4);
	\draw[thick, blue, densely dotted] (1, 3.5) -- (0.75, 3.25) -- (1, 3);
	\draw[thick, blue, densely dotted] (1, 2.5) -- (0.75, 2.25) -- (1, 2);
	\draw[thick, blue, densely dotted] (1, 1.5) -- (0.75, 1.25) -- (1, 1);
	\draw[thick, blue, densely dotted] (1, 0.5) -- (0.75, 0.25) -- (1, 0);
	\draw[thick, blue, densely dotted] (-1, 5.5) -- (-0.75, 5.25) -- (-1, 5);
	\draw[thick, blue, densely dotted] (-1, 4.5) -- (-0.75, 4.25) -- (-1, 4);
	\draw[thick, blue, densely dotted] (-1, 3.5) -- (-0.75, 3.25) -- (-1, 3);
	\draw[thick, blue, densely dotted] (-1, 2.5) -- (-0.75, 2.25) -- (-1, 2);
	\draw[thick, blue, densely dotted] (-1, 1.5) -- (-0.75, 1.25) -- (-1, 1);
	\draw[thick, ggreen] (-1,5.5) -- (1,5.5);
	\draw[thick, ggreen] (0, 5.5) -- (0, 1.5) -- (1, 1.5);
	\draw[thick, ggreen] (1,2.5) -- (0, 2.5);
	\draw[thick, ggreen] (1, 4.5) -- (0, 4.5);
	\draw[thick, ggreen] (-1, 3.5) -- (0, 3.5);
	\draw[thick] (-1, 5) -- (-0.5, 5) -- (-0.5, 4) -- (-1, 4);
	\draw[thick] (1, 4) -- (.5, 4) -- (.5, 3) -- (1, 3);
	\draw[thick, red] (1, 1) -- (.5, 1) -- (.5, .5) -- (1, .5);
	\draw[thick, red] (-1, 3) -- (-.5, 3) -- (-.5, 0) -- (1, 0);
	\draw[thick, red] (-1, 1.5) -- (-.5, 1.5) -- (-.5, 2.5) -- (-1, 2.5);
	\draw[thick, red] (-1, 1) -- (-.5, 1);
	\node[left] at (-1, 5.5) {$1_\ell$};
	\draw[ggreen, fill=ggreen] (-1,5.5) circle (0.05);
	\node[left] at (-1, 5) {$2_\ell$};
	\draw[fill=black] (-1,5) circle (0.05);
	\node[left] at (-1, 4.5) {$3_\ell$};
	\draw[fill=black] (-1,4.5) circle (0.05);
	\node[left] at (-1, 4) {$4_\ell$};
	\draw[fill=black] (-1,4) circle (0.05);
	\node[left] at (-1, 3.5) {$5_\ell$};
	\draw[ggreen, fill=ggreen] (-1,3.5) circle (0.05);
	\node[left] at (-1, 3) {$6_\ell$};
	\draw[red, fill=red] (-1,3) circle (0.05);
	\node[left] at (-1, 2.5) {$7_\ell$};
	\draw[red, fill=red] (-1,2.5) circle (0.05);
	\node[left] at (-1, 2) {$8_\ell$};
	\draw[red, fill=red] (-1,2) circle (0.05);
	\node[left] at (-1, 1.5) {$9_\ell$};
	\draw[red, fill=red] (-1,1.5) circle (0.05);
	\node[left] at (-1, 1) {$10_\ell$};
	\draw[red, fill=red] (-1,1) circle (0.05);
	\draw[ggreen, fill=ggreen] (1,5.5) circle (0.05);
	\node[right] at (1,5.5) {$1_r$};
	\draw[fill=black] (1,5) circle (0.05);
	\node[right] at (1,5) {$2_r$};
	\draw[ggreen, fill=ggreen] (1,4.5) circle (0.05);
	\node[right] at (1,4.5) {$3_r$};
	\draw[red, fill=red] (1,0) circle (0.05);
	\node[right] at (1,4) {$4_r$};
	\draw[fill=black] (1,4) circle (0.05);
	\node[right] at (1,3.5) {$5_r$};
	\draw[fill=black] (1,3.5) circle (0.05);
	\node[right] at (1,3) {$6_r$};
	\draw[fill=black] (1,3) circle (0.05);
	\node[right] at (1,2.5) {$7_r$};
	\draw[ggreen, fill=ggreen] (1,2.5) circle (0.05);
	\node[right] at (1,2) {$8_r$};
	\draw[fill=black] (1,2) circle (0.05);
	\node[right] at (1,1.5) {$9_r$};
	\draw[ggreen, fill=ggreen] (1,1.5) circle (0.05);
	\node[right] at (1,1) {$10_r$};
	\draw[red, fill=red] (1,1) circle (0.05);
	\node[right] at (1,0.5) {$11_r$};
	\draw[red, fill=red] (1,0.5) circle (0.05);
	\node[right] at (1,0) {$12_r$};
	\end{tikzpicture}
\end{align*}

Let $E = \{(2k)_\ell\}^{n}_{k=1} \cup \{(2k)_r\}^{m}_{k=1}$ and let $O = \{(2k-1)_\ell\}^{n}_{k=1} \cup \{(2k-1)_r\}^{m}_{k=1}$.  For each $1 \leq p \leq t$, let $i_p = l_{p+1} - l_{p}$, where $l_{t+1} = n+1$, and, for $p \neq t$, let $\pi_{\ell, p}$ denote the non-crossing partition obtained by restricting $\pi$ to $\{(2l_{p})_\ell, (2l_{p}+1)_\ell, \ldots, (2l_{p+1}-2)_\ell\}$.  Note that $\sum^t_{p=1} i_p = n$.  Furthermore, as explained in Lemma \ref{lem:T-case-1}, if $\pi'_{\ell, p}$ is obtained from $\pi_{\ell, p}$ by adding the singleton block $\{(2l_p-1)_\ell\}$, then $\pi'_{\ell, p}|_{O}$ is naturally an element of $NC'(i_p)$ and $\pi'_{\ell, p}|_E$ is naturally an element of $NC(i_p)$, which must be $K(\pi'_{\ell, p}|_O)$ in order for $\pi \vee \sigma_{n,m} = 1_{2n,2m}$. 

Similarly, for each $1 \leq q \leq s$, let $j_q = k_{q+1} - k_{q}$, where $k_{s+1} = m+1$, and, for $q \neq s$, let $\pi_{r, q}$ denote the non-crossing partition obtained by restricting $\pi$ to $\{(2k_{q})_r, (2k_{q}+1)_r, \ldots, (2k_{q+1}-2)_r\}$.  Note that $\sum^s_{q=1} j_q = m$.  Furthermore, as explained in Lemma \ref{lem:T-case-1}, if $\pi'_{r, q}$ is obtained from $\pi_{r, q}$ by adding the singleton block $\{(2k_q-1)_r\}$, then $\pi'_{r, q}|_{O}$ is naturally an element of $NC'(j_q)$ and $\pi'_{r, q}|_E$ is naturally an element of $NC(j_q)$, which must be $K(\pi'_{r, q}|_O)$ in order for $\pi \vee \sigma_{n,m} = 1_{2n,2m}$. 

Finally, if $\pi'$ is the bi-non-crossing partition obtained by restricting $\pi$ to 
\[
\{(2l_{t})_\ell, (2l_{t}+1)_\ell, \ldots, (2n)_\ell, (2k_{s})_r, (2k_{s}+1)_r, \ldots, (2m)_r\}
\]
(which is shaded differently in the above diagram), then $\pi' \in BNC_S(i_t-1,j_s-1)'_o$.

Expanding
\[
\kappa_\rho(\underbrace{a_1, a_2, \ldots, a_1, a_2}_{a_1 \text{ occurs }n \text{ times}}, \underbrace{b_1, b_2, \ldots, b_1, b_2}_{b_1 \text{ occurs }m \text{ times}}) z^n w^m
\]
for $\rho \in BNC_S(n,m)_o$ and  summing such terms with $V_\rho = V_\pi$, we obtain 
\begin{align*}
\kappa_{t, s}(a_1, b_1)&\left(\prod^{t-1}_{p=1} (f_1 \check{\ast} f_2)(0_{i_p}, 1_{i_p}) z^{i_p} \right) \left(\prod^{s-1}_{q=1} (g_1 \check{\ast} g_2)(0_{j_q}, 1_{j_q}) w^{j_q}\right)\\
&\cdot \left(\sum_{\tau \in  BNC_S(i_t-1,j_s-1)'_o} \kappa_\tau(\underbrace{a_2, a_1, a_2, a_1, \ldots, a_1, a_2}_{a_1 \text{ occurs }i_t-1 \text{ times}}, \underbrace{b_2, b_1, b_2, b_1, \ldots, b_1, b_2}_{b_1 \text{ occurs }j_s-1 \text{ times}})   z^{i_t} w^{j_s} \right).
\end{align*}
Note for $p \neq t$ each $(f_1 \check{\ast} f_2)(0_{i_p}, 1_{i_p}) z^{i_p}$ comes from the $p^{\mathrm{th}}$ region from the top on the left, for $q \neq s$ each $(g_1 \check{\ast} g_2)(0_{j_q}, 1_{j_q}) w^{j_q}$ comes from the $q^{\mathrm{th}}$ region from the top on the right, and all $\tau \in BNC_S(i_t-1,j_s-1)'_o$ are possible on the bottom, with the coefficient the sum being the correct one

Finally, if we sum over all possible $n,m\geq 1$ and all possible $V_\pi$ (so, in the above equation, we get all possible $t, s\geq 1$ and all possible $i_p, j_q\geq 1$), we obtain that 
\begin{align*}
\Psi_e(z,w) &= \sum_{t,s \geq 1} \kappa_{t, s}(a_1, b_1)\left(\prod^{t-1}_{p=1} \phi_{f_1 \check{\ast} f_2}(z) \right) \left(\prod^{s-1}_{q=1} \phi_{g_1 \check{\ast} g_2}(z)\right) \Psi_{o'}(z,w) \\
& = \sum_{t,s \geq 1} \kappa_{t, s}(a_1, b_1) \left(\phi_{f_1 \check{\ast} f_2}(z)\right)^{t-1} \left(\phi_{g_1 \check{\ast} g_2}(w)\right)^{s-1}\Psi_{o'}(z,w)\\
&= \frac{1}{\phi_{f_1 \check{\ast} f_2}(z) \phi_{g_1 \check{\ast} g_2}(w)} \Psi_{o'}(z,w) K_{a_1, b_1}\left( \phi_{f_1 \check{\ast} f_2}(z), \phi_{g_1 \check{\ast} g_2}(w)   \right). \qedhere
\end{align*}

\end{proof}

\begin{proof}[Proof of Theorem \ref{thm:S-property}]
Using equations (\ref{eq:inversion-with-convolution}, \ref{eq:convolution-with-inverse-series}), we see (via Lemmata \ref{lem:S-case-1}, \ref{lem:S-case-2}, \ref{lem:S-case-3}, \ref{lem:S-case-4}, \ref{lem:S-case-5})  that
\begin{align*}
\Psi_e\left(\phi^{\inv}_{f_1 \ast f_2}(z), \phi^{\inv}_{g_1 \ast g_2}(w) \right)&=  K_{a_2, b_2}\left(\phi^{\inv}_{f_2}(z), \phi^{\inv}_{g_2}(w) \right), \\
\Psi_{o,0}\left(\phi^{\inv}_{f_1 \ast f_2}(z), \phi^{\inv}_{g_1 \ast g_2}(w) \right)&= \phi^{\inv}_{f_1 \ast f_2}(z)\phi^{\inv}_{g_1 \ast g_2}(w)\cdot \frac{zw}{\phi^{\inv}_{f_2}(z) \phi^{\inv}_{g_2}(w)} = \phi^{\inv}_{f_1}(z) \phi^{\inv}_{g_1}(w),\\
\Psi_{o,r}\left(\phi^{\inv}_{f_1 \ast f_2}(z), \phi^{\inv}_{g_1 \ast g_2}(w) \right)&= \frac{\phi^{\inv}_{f_1}(z) \phi^{\inv}_{g_1}(w)}{w} K_{a_2, b_2}\left(\phi^{\inv}_{f_2}(z), \phi^{\inv}_{g_2}(w) \right), \\
\Psi_{o,\ell}\left(\phi^{\inv}_{f_1 \ast f_2}(z), \phi^{\inv}_{g_1 \ast g_2}(w) \right)&= \frac{\phi^{\inv}_{f_1}(z) \phi^{\inv}_{g_1}(w)}{z} K_{a_2, b_2}\left(\phi^{\inv}_{f_2}(z), \phi^{\inv}_{g_2}(w) \right), \text{ and}\\
\Psi_{o,\ell r}\left(\phi^{\inv}_{f_1 \ast f_2}(z), \phi^{\inv}_{g_1 \ast g_2}(w) \right) &=\frac{\phi^{\inv}_{f_1}(z) \phi^{\inv}_{g_1}(w)}{zw} K_{a_2, b_2}\left(\phi^{\inv}_{f_2}(z), \phi^{\inv}_{g_2}(w) \right).
\end{align*}
Since
\[
\Phi_0\left(\phi^{\inv}_{f_1 \ast f_2}(z), \phi^{\inv}_{g_1 \ast g_2}(w) \right) = \frac{1}{\phi^{\inv}_{f_1}(z) \phi^{\inv}_{g_1}(w)} \Psi_{o'}\left(\phi^{\inv}_{f_1 \ast f_2}(z), \phi^{\inv}_{g_1 \ast g_2}(w) \right)  K_{a_1, b_1}\left(\phi^{\inv}_{f_1}(z), \phi^{\inv}_{g_1}(w) \right)
\]
by using equations (\ref{eq:inversion-with-convolution}) and Lemma \ref{lem:S-case-6}, and since
\[
\frac{1}{z} + \frac{1}{w} + \frac{1}{zw} = \frac{1+z+w}{zw} \qqand K_{a_1a_2, b_1b_2}(z,w) = \Psi_e(z,w) + \Psi_0(z,w),
\]
we have verified equation (\ref{eq:S-eq-to-verify}) holds and thus the proof is complete.
\end{proof}

\end{document}